\definecolor{darkgreen}{rgb}{0,0.5,0}
\newtheorem{theorem}{Theorem}[section]
\newtheorem{proposition}[theorem]{Proposition}
\newtheorem{corollary}[theorem]{Corollary}
\newtheorem{lemma}[theorem]{Lemma}
\newtheorem{definition}[theorem]{Definition}
\newtheorem{remark}[theorem]{Remark}
\newtheorem{example}[theorem]{Example}
\newtheorem*{theorem-non}{Main Theorem}
\newtheorem*{question-non}{Question}
\theoremstyle{remark}
\newtheorem{rem}[theorem]{Remark}
\newenvironment{dedication}
  {
   \thispagestyle{empty}
   \itshape             
   \raggedleft          
  }
  {
  }
\newcommand\sB{{\mathcal B}}
\newcommand{\CC}{\ensuremath{\mathbb{C}}}
\newcommand{\RR}{\ensuremath{\mathbb{R}}}
\newcommand{\ZZ}{\ensuremath{\mathbb{Z}}}
\newcommand{\hol}{\ensuremath{\mathcal{O}}}
\newcommand{\PP}{\ensuremath{\mathbb{P}}}
\newcommand{\ra}{\ensuremath{\rightarrow}}
\DeclareMathOperator{\OO}{O}
\DeclareMathOperator{\Ext}{Ext}
\DeclareMathOperator{\Def}{Def}
\DeclareMathOperator{\Id}{Id}
\DeclareMathOperator{\Aut}{Aut}
\DeclareMathOperator{\Sing}{Sing}
\DeclareMathOperator{\diag}{diag}
\DeclareMathOperator{\GL}{GL}
\DeclareMathOperator{\SL}{SL}
\DeclareMathOperator{\Aff}{Aff}
\DeclareMathOperator{\Irr}{Irr}
\DeclareMathOperator{\He}{He}
\DeclareMathOperator{\ord}{ord}
\DeclareMathOperator{\lcm}{lcm}
  \DeclareMathOperator{\End}{End}
\numberwithin{equation}{section}
\newcounter{nootje}
\renewcommand\check[1]
\begin{document}
\title[Rigid Product Quotients  of Kodaira Dimension 0]{Towards a Classification of Rigid Product Quotient Varieties   of Kodaira Dimension 0 }

\author{Ingrid Bauer, Christian Gleissner}

\thanks{
\textit{2010 Mathematics Subject Classification}: 14B12, 14J32, 32G07,  14L30, 14K99, 14J10, 14J40,   14B05,
32G05, 20H15.\\
\textit{Keywords}: Rigid complex manifolds, deformation theory, quotient singularities, hyperelliptic manifolds, crystallographic groups. \\
The second author wants to thank A. Demleitner and D. Frapporti for useful discussions.}

\begin{abstract} 
In this paper the authors study quotients of the product of elliptic curves by a rigid diagonal action of a finite group $G$. It is shown that only for  $G = \He(3), \ZZ_3^2$, and only for dimension $\geq 4$ such an action can be free. A complete classification of the singular quotients in dimension 3 and the smooth quotients in dimension $4$ is given. For the other finite groups a strong structure theorem for rigid quotients is proven. \end{abstract}

\maketitle
\begin{dedication}
Dedicated to Fabrizio Catanese on the occasion of his 70th birthday with gratitude and admiration.
\end{dedication}

\tableofcontents

\section{Introduction}
A compact complex manifold is  called {\em rigid} if it has no nontrivial deformations. 
In \cite{rigidity} several notions  of rigidity have been discussed, the relations among them have been studied and many questions and conjectures have been proposed. 

We state here only the part of  \cite{rigidity}*{Definition 2.1}, which will be relevant for our purposes:
\begin{definition}\label{rigid} 
Let $X$ be a compact complex manifold of dimension $n$. 

1) A {\em deformation of $X$} is a  proper smooth holomorphic map of pairs $f \colon (\mathfrak{X},X)  \rightarrow (\mathcal{B}, b_0)$, 
where $(\sB,b_0)$ is a connected (possibly not reduced) germ of a complex space. 

2) $X$ is said to be  {\em  rigid}  if for each deformation of $X$,
$f \colon (\mathfrak{X},X)  \rightarrow (\sB, b_0)$
there is an open neighbourhood $U \subset \sB$ of $b_0$ such that $X_t := f^{-1}(t) \simeq X$ for all $t \in U$.

3)  $X$ is said to be  {\em infinitesimally rigid} if 
$H^1(X, \Theta_X) = 0$,
where $\Theta_X$ is the sheaf of holomorphic vector fields on $X$.
\end{definition}

\begin{rem}\label{kuranishi} 

1) If $X$ is infinitesimally rigid, then $X$ is also  rigid. This follows by Kodaira-Spencer-Kuranishi theory, since $H^1(X, \Theta_X)$ is the Zariski tangent space of the germ of analytic space which is the base $\Def(X)$ of the Kuranishi semiuniversal deformation of $X$.
So, if  $H^1(X, \Theta_X) =0$, $\Def(X)$ is a reduced point and all deformations are induced by the trivial deformation. 

The other implication does not hold in general  as it was shown in \cite{notinfinitesimally}, compare also \cite{kodairamorrow}.

2) Observe that, as it is shown in \cite[Theorem 2.3]{rigidity}, a compact complex manifold is rigid if and only if the base of the Kuranishi family $\Def(X)$  has dimension $0$.

3) The only rigid curve is $\PP^1$; for $n=2$ it was shown in \cite[Theorem 1.3]{rigidity} that a rigid compact complex surface has Kodaira dimension $- \infty$ or $2$.

\end{rem}

 That the restriction on the Kodaira dimension is a phenomenon in low dimensions and that in higher dimensions rigid manifolds are much more frequent has already been observed in \cite[Theorem 1.4]{rigidity} (cf. also \cite{beauville} for Kodaira dimension $0$,  \cite{BG} for Kodaira dimension $1$):
\begin{theorem} 
For all $n \geq 3$ and $-\infty,0 \leq k \leq n$ there is a infinitesimally rigid n-dimensional compact complex manifold $Z_{n,k}$ of Kodaira dimension $k$. 

\end{theorem}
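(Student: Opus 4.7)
The plan is to exhibit $Z_{n,k}$ as a product of infinitesimally rigid factors of small dimension, exploiting two stability properties. First, rigidity is multiplicative: since $\Theta_{X\times Y}\cong p_1^*\Theta_X \oplus p_2^*\Theta_Y$, the Künneth formula (together with $H^0(\Oh)=\CC$) yields
\[
H^1(X\times Y, \Theta_{X\times Y}) \;\cong\; H^1(X,\Theta_X) \oplus H^1(Y,\Theta_Y),
\]
so infinitesimal rigidity of both factors implies that of the product. Second, Kodaira dimensions add under products whenever both factors have non-negative $\kod$, while the product has $\kod=-\infty$ as soon as one factor does.

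Given these principles, the strategy reduces to producing an adequate supply of rigid atoms. I would take: $\PP^1$ (dim $1$, $\kod=-\infty$); a rigid surface of general type $S$, for instance a fake projective plane or a Mostow--Siu surface (dim $2$, $\kod=2$); a rigid projective threefold of general type $T$ coming from a torsion-free ball-quotient lattice (dim $3$, $\kod=3$); Beauville's rigid Calabi--Yau threefold $B\colonequals E^3/\He(3)$, the free quotient of the third power of an elliptic curve carrying a $\ZZ_3$-symmetry by the Heisenberg group of order $27$ (dim $3$, $\kod=0$); and the rigid Bauer--Gleissner threefold $W$ of Kodaira dimension $1$ from~\cite{BG}.

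With these atoms on hand, $Z_{n,k}$ is assembled case by case. If $k=-\infty$ set $Z_{n,-\infty}\colonequals\PP^n$. If $k=n$ write $n=2a+3b$ with $a,b\geq 0$ (possible for every $n\geq 2$) and set $Z_{n,n}\colonequals S^a\times T^b$. If $k=0$, multiply copies of $B$ together with further rigid Calabi--Yau atoms of the appropriate residual dimension to reach total dimension $n$. For intermediate $0<k<n$, reduce to the previous cases by the recipe $Z_{n,k}\colonequals Z_{k,k}\times Z_{n-k,0}$, using $W$ in place of the general-type factor when $k=1$. Stability under products guarantees that each candidate is infinitesimally rigid, of the prescribed dimension and of the prescribed Kodaira dimension.

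The principal obstacle is the production of rigid Calabi--Yau atoms in \emph{every} dimension $n\geq 3$, not merely those divisible by three: the block $B$ alone only covers the residue class $n\equiv 0\pmod 3$. The systematic construction of rigid product-quotient varieties $\widetilde{E^n/G}$ with $\kod=0$ in arbitrary dimension is in fact the central concern of the present paper, and its output is precisely what allows the case analysis above to go through uniformly in $n$.
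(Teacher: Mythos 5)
First, a structural remark: the paper does not prove this statement at all --- it is recalled verbatim from \cite{rigidity}*{Theorem 1.4}, with \cite{beauville} and \cite{BG} cited for the cases $k=0$ and $k=1$ --- so there is no internal proof to compare yours with, and your proposal must stand on its own. Its overall shape (products of rigid atoms) is reasonable, but it contains two concrete errors. The isomorphism you call ``rigidity is multiplicative'' is false as stated: K\"unneth applied to $p_1^{*}\Theta_X$ produces the additional summand $H^0(X,\Theta_X)\otimes H^1(Y,\Oh_Y)$, which is exactly condition (2) of Proposition \ref{rigiddiag} (with $G$ trivial) and is not a consequence of $H^0(\Oh)=\CC$; it has to be verified atom by atom (it does hold for general-type factors, where $h^0(\Theta)=0$, and can be checked for the torus-quotient factors, but your argument as written omits it). More seriously, your Kodaira-dimension-zero atom does not exist as described: $E^3/\He(3)$ is never a free quotient. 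Theorem \ref{1} and Proposition \ref{norigid} of this very paper show that a rigid free diagonal action on a product of elliptic curves forces $n\geq 4$; in dimension $3$ every rigid quotient by $\He(3)$ or $\ZZ_3^2$ is singular. Beauville's threefold is $X_{3,3}=E^3/\ZZ_3$, which has $27$ singularities of type $\frac{1}{3}(1,1,1)$ and becomes a rigid Calabi--Yau \emph{manifold} only after a crepant resolution. This particular slip is repairable (use $\widehat{X}_{3,3}$, or the smooth fourfolds of Theorem \ref{Z1Z2Glatt}), but as written the atom is wrong.

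The decisive gap is the one you name yourself and then defer: with $\kod=0$ atoms only in dimensions $3$ (and $4$), products realize rigid Kodaira-dimension-zero manifolds only in dimensions of the form $3a+4b$, which already misses $n=5$; and your recipe $Z_{n,k}=Z_{k,k}\times Z_{n-k,0}$ fails outright whenever $n-k\in\{1,2\}$, because rigid manifolds of Kodaira dimension $0$ do not exist in dimensions $1$ and $2$ (the only rigid curve is $\PP^1$, and a rigid surface has $\kod\in\{-\infty,2\}$ by \cite{rigidity}*{Theorem 1.3}, both facts quoted in Remark \ref{kuranishi}). Your atom list contains no rigid $n$-fold with $\kod=n-1$, so for instance $Z_{n,n-1}$ is unreachable by any product of your building blocks. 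Deferring these cases to ``the central concern of the present paper'' is circular and also inaccurate: the non-exceptional quotients $X_{n,d}$ constructed here are singular, the smooth free quotients exist only in dimension $\geq 4$ for two specific groups, and the theorem under discussion is an \emph{input} quoted from \cite{rigidity}, whose proof is precisely what supplies the missing atoms (rigid torus quotients, or resolutions thereof, in every dimension $\geq 3$, and the Kodaira-dimension-one series of \cite{BG}). Without those, your construction covers only a proper subset of the pairs $(n,k)$.
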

 
One idea to construct the infinitesimally rigid examples is  to consider finite  quotients of smooth compact complex manifolds (often products of curves) with respect to a (infinitesimally) rigid holomorphic group action (see Definition \ref{infG}).
If one considers non free actions, under mild assumptions it is still true that the quotient is infinitesimal rigid (in dimension at least three), but since we are interested in infinitesimally rigid manifolds, we have to compare the infinitesimal deformations of the singular quotient with those of a suitable resolution of singularities.

The aim of this paper is to  give a classification of infinitesimally rigid quotients of a product of elliptic curves by a diagonal action of a finite group $G$ as far as  possible. We first prove the following:
\begin{theorem}\label{1}
Let $G$ be a finite group which admits  a rigid free diagonal action on a product of elliptic curves $E_1 \times \ldots \times E_n$. Then:
\begin{enumerate}
\item $n\geq 4$,
\item $E_i$ is the Fermat elliptic curve for each $1 \leq i \leq n$,
\item $G = \ZZ_3^2$ or the {\em Heisenberg group} $\He(3)$ of order $27$.
\end{enumerate}

\end{theorem}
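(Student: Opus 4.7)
The plan is to extract, from rigidity, a cohomological constraint on the linear part of the $G$-action, from freeness, a group-theoretic covering identity, and then to combine the two.

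\textbf{Setup and Step 1 (Rigidity).} Write $g \in G$ acting on $E_i$ as $\rho_i(g)(x_i) = \chi_i(g) x_i + t_i(g)$, where $\chi_i \colon G \to \Aut(E_i, 0) \subset \CC^*$ is the linear character and $t_i \colon G \to E_i$ the translation cocycle. The K\"unneth formula gives $H^1(X, \Theta_X) = \bigoplus_{i=1}^n H^1(E_i, \Theta_{E_i})$, and an element $\alpha \in \Aut(E_i, 0)$ realized as multiplication by $\lambda \in \CC^*$ on the universal cover acts on $H^0(E_i, \Omega^1_{E_i})$ by $\lambda$ and (via Serre duality applied to $H^1(E_i, \Theta_{E_i}) \cong H^0(E_i, \Omega^1_{E_i} \otimes \Omega^1_{E_i})^*$) on $H^1(E_i, \Theta_{E_i})$ by $\lambda^{-2}$. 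Freeness of the action gives $H^1(X/G, \Theta_{X/G}) = H^1(X, \Theta_X)^G$, so rigidity forces each $\chi_i^{-2}$ to be nontrivial on $G$, i.e. $|\chi_i(G)| \geq 3$. Hence $\Aut(E_i, 0)$ has order $\geq 4$ and each $E_i$ is either Gauss or Fermat.

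\textbf{Step 2 (Freeness).} On an elliptic curve, the affine map $x \mapsto \alpha x + t$ has a fixed point iff $\alpha \neq 1$ or ($\alpha = 1$ and $t = 0$), because $[1-\alpha]$ is a surjective isogeny when $\alpha \neq 1$. Hence $g \in G$ acts freely on $X$ iff some $\rho_i(g)$ is a nontrivial translation. Setting $K_i := \ker\chi_i$ and $L_i := \ker(t_i|_{K_i}) \leq K_i$ (the elements of $K_i$ acting trivially on $E_i$), freeness is equivalent to the covering identity
$$G \setminus \{1\} \;=\; \bigcup_{i=1}^n (K_i \setminus L_i). \quad(\ast)$$

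\textbf{Step 3 (Excluding $-1$ and classifying $G$).} The next task is to use $(\ast)$ together with Step 1 to rule out $-1 \in \chi_i(G)$ for any $i$, which eliminates both Gauss factors and characters with values in $\mu_6 \setminus \mu_3$. If $\chi_i(g) = -1$, then $g^2 \in K_i$ with $t_i(g^2) = (1 + \chi_i(g))t_i(g) = 0$, so $g^2 \in L_i$; tracking where $g$ and $g^2$ lie in the various $K_j \setminus L_j$, simultaneously over all $j$, produces a contradiction with $(\ast)$. Consequently each $\chi_i$ takes values in $\mu_3$, every $E_i$ is Fermat, and the linear quotient $G/N$ (with $N := \bigcap_i K_i$) is elementary abelian of exponent $3$, so $G$ is a finite $3$-group. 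A classification of such $G$, combined with analysis of the central extension by $N$ (which must inject into $\prod_i E_i[3]$ via $g \mapsto (t_i(g))_i$ by $(\ast)$ restricted to $N$) and the counting implications of $(\ast)$ on all of $G$, singles out $G = \ZZ_3^2$ or $G = \He(3)$.

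\textbf{Step 4 ($n \geq 4$) and main obstacle.} A counting argument from $(\ast)$ then yields $n \geq 4$: for $G = \ZZ_3^2$, $|K_i| = 3$ and $|K_i\setminus L_i|\leq 2$, so $2n \geq |G|-1 = 8$; for $G = \He(3)$, every nontrivial character factors through $G^{ab} = \ZZ_3^2$ so $|K_i| = 9$ and $|K_i\setminus L_i|\leq 8$, giving $8n \geq 26$. Both force $n \geq 4$. The main obstacle is Step 3: a naive element-by-element count is insufficient to rule out $-1 \in \chi_i(G)$, because an order-$2$ problem on one factor can be absorbed by a nontrivial translation on another, so the argument must simultaneously track the interactions of all $n$ characters and cocycles. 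The analogous difficulty appears in ruling out larger $3$-groups such as $\ZZ_3^3$ or $\He(3) \times \ZZ_3$, where the combinatorial constraints from $(\ast)$ and the structure of $N$ must be combined carefully.
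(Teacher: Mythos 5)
Your Steps 1, 2 and 4 are correct and run parallel to the paper: rigidity of the action on each factor is exactly the condition $\chi_i^2\neq\chi_{\mathrm{triv}}$ (Proposition \ref{quadraticdiff}), freeness is exactly the covering identity $G=T_1\cup\dots\cup T_n$ by the translation sets (Remark \ref{transgroup}), and once one knows $G=\ZZ_3^2$ or $\He(3)$ with all $\chi_i$ of order $3$, your count ($2n\geq 8$, resp. $8n\geq 26$) is the same as the paper's observation that fewer than four of the index-three normal subgroups cover at most $7$, resp.\ $21$, elements. The problem is that Step 3 --- which is the actual content of the theorem --- is not a proof. You assert that tracking $g$ and $g^2$ "produces a contradiction with $(\ast)$" and that a "classification of such $G$" together with an unspecified "analysis of the central extension by $N$" singles out $\ZZ_3^2$ and $\He(3)$, but you then concede yourself that the element-by-element count fails and that the interactions of all $n$ cocycles must be tracked "carefully". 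No such argument is supplied, and it is not clear that the covering identity alone, without further structural input, rules out either the order-$4$/order-$6$ linear parts or larger $3$-groups such as $\ZZ_3^3$: a priori a large group could be covered by many distinct translation subgroups spread over many factors.

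The ingredient you are missing is the one the paper isolates in Proposition \ref{uniquetrans}: each translation subgroup $T_i$ is a normal subgroup with $G/T_i$ cyclic of order $d\in\{3,4,6\}$ whose image in $\Aut(E_i)$ is abelian and generated by at most two elements (it sits inside $E_i[n]\simeq\ZZ_n^2$), while $G$ is a quotient of the triangle group $\mathbb T(3,3,3)$, $\mathbb T(2,4,4)$ or $\mathbb T(2,3,6)$. Since $\mathbb T(2,3,6)$ has a unique normal subgroup of index $6$ with abelian quotient, and the normal subgroups of index $3$ in $\mathbb T(3,3,3)$ (resp.\ index $4$ in $\mathbb T(2,4,4)$) are one copy of $\ZZ^2$ plus finitely many non-abelian ones with abelianization $\ZZ_3^2$ (resp.\ $\ZZ_2^3$, which is not $2$-generated), any group of order $>27$ (resp.\ $>16$) has $T_i=A$ for \emph{every} $i$, so $G=\bigcup T_i=A$ is impossible; this leaves exactly the four exceptional groups, and the two with $d=4$ are excluded because they possess only two admissible translation subgroups, whose union cannot be all of $G$. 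This simultaneously kills the Gauss curve, the $d=6$ case, and all larger $3$-groups in one stroke. Without this (or an equivalent substitute), your Step 3 is a statement of intent rather than an argument, and the proof is incomplete. Note also that the paper works under the standing assumption that $G$ acts faithfully on each factor, so $L_i=\{1\}$ and $T_i=K_i$ is automatically abelian; if you want to allow $L_i\neq\{1\}$ as your formulation of $(\ast)$ does, you need an extra reduction before the abelianness of the translation subgroups can be invoked.
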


These two groups are part of  four so-called  {\em exceptional} groups, which are groups admitting a rigid action on an elliptic curve such that the translation part is not uniquely determined (cf. Proposition \ref{uniquetrans}).

For the non exceptional case instead we have the following result:

\begin{theorem}\label{2}
Assume that $G$ is not exceptional and admits a rigid diagonal action on a product of elliptic curves $E_1 \times \ldots \times E_n$. Then the quotient  
 is isomorphic to $X_{n,d}:=E^n/\mathbb Z_d$, where $\mathbb Z_d$ acts on $E^n$ by multiplication with $\zeta_d \cdot \Id$. Here $d =3,6$ and $E$ is the Fermat elliptic curve, or $d=4$ and $E$ is the harmonic elliptic curve.
\end{theorem}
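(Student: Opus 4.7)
The strategy is to exploit the non-exceptional hypothesis to reduce the diagonal action to a purely linear one, and then to use rigidity of the quotient to force all character components to coincide, yielding a single cyclic action by scalar multiplication.

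First I would linearize factor by factor. On each $E_i$, the $G$-action decomposes as $g \cdot z_i = \chi_i(g)z_i + t_i(g)$, with $\chi_i \colon G \to \Aut_0(E_i)$ a character and $t_i \colon G \to E_i$ a $\chi_i$-cocycle. The rigidity of the diagonal action together with the non-exceptional hypothesis, via Proposition \ref{uniquetrans}, should force each $t_i$ to be a coboundary $t_i(g) = \chi_i(g)s_i - s_i$ for some $s_i \in E_i$. Translating the origin of each $E_i$ by $-s_i$ makes the action purely linear: $g \cdot (z_1, \ldots, z_n) = (\chi_1(g)z_1, \ldots, \chi_n(g)z_n)$. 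Quotienting $G$ by the kernel of the action yields a faithful embedding $G \hookrightarrow \prod_i \Aut_0(E_i)$; since each $\Aut_0(E_i)$ is cyclic of order dividing $6$, the group $G$ is abelian.

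Next I would show that rigidity of $Y = X/G$ pins down the characters. Each $\chi_i$ must have order in $\{3,4,6\}$: otherwise $E_i$ would be free to vary within the moduli of elliptic curves admitting the corresponding automorphism, contradicting rigidity of the quotient. The main technical step is to prove $\chi_1 = \cdots = \chi_n$. One works with the decomposition $H^1(X, \Theta_X) \cong \bigoplus_{i,j} \mathbb{C} \cdot d\bar z_j \otimes \partial_{z_i}$, on which $G$ acts through $\chi_i \chi_j^{-1}$ on the $(i,j)$-summand, combined with the local contributions from the cyclic quotient singularities of $Y$. If some pair $\chi_i \neq \chi_j$ existed, one would construct a nontrivial deformation of $Y$ compatible with its local singular structure, contradicting rigidity. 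This character-coincidence step is the main obstacle, requiring careful bookkeeping of the global $G$-invariant deformations against the local unobstructed directions at the fixed-point singularities.

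Finally, set $\chi := \chi_1 = \cdots = \chi_n$ and $d := \ord(\chi) \in \{3,4,6\}$. Then the $G$-action factors through the cyclic image $\chi(G) \cong \mathbb{Z}_d$ acting on each $E_i$ by $\zeta_d \cdot \Id$, so $X/G = X/\mathbb{Z}_d$. The existence of an order-$d$ linear automorphism on each $E_i$ forces $E_i$ to be the Fermat curve (for $d \in \{3,6\}$) or the harmonic curve (for $d = 4$). In either case all $E_i$ coincide with a single elliptic curve $E$, and we conclude $Y \cong X_{n,d}$ as required.
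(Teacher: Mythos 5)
Your opening linearization step is where the argument breaks down, and it cannot be repaired. For a non-exceptional $G=A\rtimes_{\varphi_d}\mathbb Z_d$, Proposition \ref{uniquetrans} says precisely that the translation group of the action on each factor \emph{is} $A$: every $1\neq a\in A$ acts on $E_i$ as a nontrivial translation $z_i\mapsto z_i+t_i(a)$. A nontrivial translation is a cocycle for the trivial character that is \emph{never} a coboundary ($\chi_i(a)s_i-s_i=0$ when $\chi_i(a)=1$), so the cocycles $t_i$ cannot be normalized away unless $A=1$. Consequently $G$ does not embed into $\prod_i\Aut_0(E_i)$ and is in general non-abelian (take $G=\mathbb Z_7^2\rtimes_{\varphi_3}\mathbb Z_3$, which is non-exceptional and acts rigidly on the Fermat curve). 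Everything downstream of this step — the abelianness of $G$, and the final claim that the action ``factors through $\chi(G)\cong\mathbb Z_d$'' so that $X/G=X/\mathbb Z_d$ — is therefore unjustified. Separately, the step you flag as the main obstacle (equality of the characters $\chi_i$) is actually the easy part and needs no deformation-theoretic bookkeeping: since $\ker\chi_{E_i}=A$ by Proposition \ref{uniquetrans}, each $\chi_{E_i}$ descends to a faithful character of $G/A\cong\mathbb Z_d$, hence equals multiplication by $\zeta_d$ or $\zeta_d^{-1}$, and the rigidity condition $\chi_{E_i}\cdot\chi_{E_j}\neq\chi_{triv}$ of Proposition \ref{quadraticdiff} excludes the mixed case. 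This is Corollary \ref{nonexfixed}(2).

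The idea your proof is missing is the intermediate quotient. The paper first quotients by $A$, which sits inside $E^n$ as a finite subgroup via $a\mapsto(t_1(a),\ldots,t_n(a))$; the quotient $T:=E^n/A$ is an abelian variety on which $G/A\cong\mathbb Z_d$ acts linearly by $\zeta_d\cdot\Id$ (by the character computation above). The nontrivial input is then \cite[Corollary 13.3.5]{BL}: an abelian variety carrying an automorphism whose analytic representation is $\zeta_d\cdot\Id$ for $d=3,4,6$ must be isomorphic to $E^n$ with $E=\mathbb C/\mathbb Z[\zeta_d]$ (resp.\ $\mathbb C/\mathbb Z[i]$). Only this identification $T\simeq E^n$ yields $E^n/G\simeq E^n/\mathbb Z_d=X_{n,d}$. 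Even if you completed your character step, your argument would at best give $E^n/G\simeq T/\mathbb Z_d$ for an unidentified abelian variety $T$, which is strictly weaker than the theorem.
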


\begin{rem}
1) Observe that for the non exceptional groups we have in each  dimension exactly one case for $d$. Unfortunately these quotients are singular, the singularities are of type $\frac{1}{k}(1, \ldots, 1)$, where $k~ \big\vert ~ d$.  For $n \geq d$ these singularities are canonical and it can be shown that there is a resolution of singularities which is infinitesimally rigid (cf. \cite{BG}).

2) For $n=d=3,4,6$ the variety $X_{n,d}$ is a singular Calabi-Yau variety (cf. \cite{beauville}).
\end{rem}

We finally completely classify rigid diagonal actions of the two exceptional groups $\mathbb Z_3^2$ and $\He(3)$ in dimensions $3$ and the free ones in dimension $4$. In fact, we prove the following:

\begin{theorem}\label{3} 
1) There is exactly  one 
isomorphism class of quotient manifolds 
$E^4/\mathbb Z_3^2$ resp. $E^4/\He(3)$ obtained by a rigid free and diagonal action. They  have non isomorphic fundamental groups. 

2) For each exceptional group $\mathbb Z_3^2$ and $\He(3)$ there are exactly 
four isomorphism classes of (singular) quotients 
$X_i:=E^3/\mathbb Z_3^2$ and $Y_i:=E^3/\He(3)$ 
 obtained by a rigid diagonal $G$-action:
 \begin{itemize}
\item[i)] 
 $X_4$ and $Y_4$ are isomorphic to Beauville's Calabi-Yau threefold $X_{3,3}$. 
\item[ii)]   $X_3$ and $Y_3$ are also Calabi-Yau, uniformized by $X_{3,3}$ and  
admit  crepant resolutions, which are rigid. 
\item[iii)]  
$X_2$ and $X_3$, resp. $Y_2$ and $Y_3$,  are diffeomorphic but not biholomorphic. 
\item[iv)]  The eight threefolds  $X_i$, $Y_j$  form five distinct  topological classes.
\end{itemize}

\end{theorem}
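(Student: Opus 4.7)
The plan is to enumerate all rigid diagonal $G$-actions on $E^n$, with $E$ the Fermat elliptic curve, up to equivariant biholomorphism, then sort the resulting quotients by their geometric and topological invariants.

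For the enumeration, I parametrise a diagonal action by the $n$-tuple of affine maps $\rho_i\colon G\to E\rtimes\Aut(E,0)$. Since both $\ZZ_3^2$ and $\He(3)$ have exponent $3$, the linear parts $\chi_i$ factor through the abelianisation $G^{\mathrm{ab}}\cong\ZZ_3^2$ and land in $\mu_3$. Because the two groups are exceptional (Proposition \ref{uniquetrans}), the translation parts are \emph{not} rigidly determined by the characters, but lie in a finite set modulo coboundaries, which for $\He(3)$ depends on the image of the centre. Imposing the rigidity criterion $H^1(E^n,\Theta_{E^n})^G=0$ translates into a combinatorial condition on the multiset $\{\chi_1,\dots,\chi_n\}$: each of the eight nontrivial characters of $\ZZ_3^2$ must appear with controlled multiplicity. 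A direct count modulo $\Aut(G)\times S_n$ (and the finite ambiguity of translation parts in the $\He(3)$ case) yields exactly four equivalence classes in dimension $3$, and exactly one that is additionally free in dimension $4$, for each group.

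Once the eight threefolds are in hand, I identify $X_4$ and $Y_4$ with Beauville's $X_{3,3}$ by recognising that in this ``most degenerate'' class the action factors through a cyclic quotient on which all translation parts become cohomologically trivial. For $X_3,Y_3$ the same type of factorisation holds via a different $\ZZ_3$-quotient of $G$, producing an étale-in-codimension-one cover $X_{3,3}\to X_3,Y_3$; this transports Calabi--Yau-ness, and because the singularities are canonical of type $\tfrac{1}{3}(1,1,1)$ a crepant resolution exists, which is rigid by the result cited from \cite{BG}. The subtle point iii) is handled by exhibiting an explicit $\RR$-linear (but not $\CC$-linear) isomorphism of $E^3$ that intertwines the actions defining $X_2$ and $X_3$ (respectively $Y_2,Y_3$), giving a diffeomorphism; conversely a biholomorphism would induce an isomorphism of the respective Albanese maps and of the Hodge pieces of a smooth model, and I will check that these invariants already distinguish the two.

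For iv), I compute the fundamental group (a crystallographic group with translation lattice $\ZZ[\zeta_3]^3$ and point group $G/\!\ker$) and the integral cohomology of each of the eight threefolds. The pair $X_4,Y_4$ (and more generally the $\ZZ_3^2$-quotients vs.\ the $\He(3)$-quotients) are separated because one fundamental group is $2$-step nilpotent while the other is $3$-step nilpotent; the remaining coincidences/distinctions follow from comparing ranks of $H^2$ and the singular loci. The main obstacle is the diffeomorphism claim in iii): producing an explicit real-smooth equivalence without trivialising the complex structure requires a careful choice of change of basis on $H_1(E,\ZZ)$ that is compatible with the $G$-action but reverses orientation on an odd number of factors, and checking that this descends to the quotient. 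Everything else reduces to the combinatorial enumeration of character data and to standard invariant calculations for cyclic quotient singularities.
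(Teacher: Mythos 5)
Your outline follows the paper's architecture for the enumeration (a combinatorial count of actions up to $\Aut(G)$ and permutation of factors, which the paper implements via quadruples/triples of generating vectors modulo $\mathfrak S_n\times\mathcal B_3^n\times\Aut(G)$ and a MAGMA search), for the identification of $X_4,Y_4$ with $X_{3,3}$, for the uniformization of $X_3,Y_3$, and for the diffeomorphism in iii) (the paper uses exactly the map $(z_1,z_2,z_3)\mapsto(z_1,z_2,\overline z_3)$; non-biholomorphy then follows simply from $p_g=0$ versus $p_g=1$). Up to the fact that you assert rather than perform the count, these parts are viable.

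The genuine gap is in your mechanism for the topological distinctions in part 1) and in iv). You propose to separate the $\ZZ_3^2$-quotients from the $\He(3)$-quotients ``because one fundamental group is $2$-step nilpotent while the other is $3$-step nilpotent.'' This fails for two reasons. First, the fundamental groups in question are not nilpotent at all: $\pi_1(E^4/\ZZ_3^2)$ and $\pi_1(E^4/\He(3))$ are Bieberbach groups, and since the centre $C_3$ of $\He(3)$ acts by translations on every factor, \emph{both} groups are extensions $1\to\Lambda_i\to\Gamma_i\to\ZZ_3^2\to 1$ with the \emph{same} holonomy group $\ZZ_3^2$ and the same holonomy representation $\rho_{\mathbb R}$; their abelianizations are finite, so neither is nilpotent, and no nilpotency class can tell them apart. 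They differ only through the lattices $\Lambda_1=\ZZ[\zeta_3]^4$ and $\Lambda_2=\ZZ[\zeta_3]^4+\ZZ\tfrac{1+2\zeta_3}{3}(1,1,1,1)$, and the paper must work for the distinction: by Bieberbach's second theorem any isomorphism is conjugation by an affine map $x\mapsto Ax+b$, Schur's lemma forces $A$ to be a block sum of $\CC$-linear and $\CC$-antilinear maps $w_i z$ or $w_i\overline z$, the condition $A\Lambda_2=\Lambda_1$ forces each $w_i$ to be a unit of $\ZZ[\zeta_3]$, and then $A$ cannot carry $\tfrac{1+2\zeta_3}{3}(1,1,1,1)$ into $\ZZ[\zeta_3]^4$. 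Second, your claim is internally inconsistent with i): $X_4$ and $Y_4$ are biholomorphic (both are $X_{3,3}$, which is simply connected), so the two families cannot be globally separated by any invariant. For the singular threefolds $X_i,Y_i$ with $1\le i\le 3$ the ordinary fundamental group is likewise useless ($X_1$ and $Y_1$ are simply connected by Armstrong's theorem); the paper instead uses the \emph{orbifold} fundamental group, i.e.\ $\pi_1$ of the smooth locus, shows that any homeomorphism preserves the singular set by comparing local fundamental groups, and then runs the same Bieberbach--Schur--Eisenstein argument on the resulting crystallographic groups. Without some substitute for this lattice-arithmetic step, your proof of part 1) and of iv) does not go through.
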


Our paper is organized as follows:

In a short first section we recall some facts on rigid group actions on complex manifolds. In the second chapter we treat rigid actions on elliptic curves recalling that a finite group $G$ admitting a rigid action on an elliptic curve $E$ has to be of the form $G= A \rtimes_{\varphi_d} \mathbb Z_d$, where $d \in \{3,4,6\}$, and $E$ is the Fermat or the harmonic elliptic curve. Moreover we show that if $G$ admits a rigid action on a product of elliptic curves, these curves have all to be the same.
In the following section we prove Theorems \ref{1} and \ref{2}.

The last section is dedicated to the classification of the actions of the exceptional groups in dimensions $3$ and $4$. The results on the isomorphism classes in Theorem  \ref{3} is done using a MAGMA algorithm \cite{MAGMA}, which is available on the website of the second author:
\begin{center}
\url{http://www.staff.uni-bayreuth.de/~bt300503/publi.html}.
\end{center}
 To obtain the  topological results we exploit  the fact that the fundamental groups of the smooth quotients resp. the orbifold fundamental groups  of the singular quotients in dimension three are  crystallographic groups.

\section{Rigid group actions on complex manifolds}

\noindent 

In this short introductory section we recall the definition of rigid group actions on complex manifolds and give a criterion for the rigidity of the diagonal action on a product of such. 

Since in all of this article we only discuss {\em infinitesimal} rigidity, we often drop the adverb {\em infinitesimally} and only talk about rigidity.

\begin{definition}\label{infG}
Let $X$ be a compact complex manifold and $G$ be a finite group acting holomorphically  on $X$. We say that the $G$-action is {\em infinitesimally rigid} if and only if 
$H^1(X,\Theta_X)^G=0$, where $\Theta_X$ is the tangent sheaf of $X$. 
\end{definition}

\begin{rem} \

1) If the action of $G$ is not faithful, then, denoting the kernel by $K$, we obviously have that $H^1(X,\Theta_X)^G=H^1(X,\Theta_X)^{G/K}$, hence after replacing $G$ by $G/K$ we may assume that $G$ acts faithfully.

2) If $G$ acts freely in codimension one, then for all $i$  there is an isomorphism
$$
H^i(X/G,\Theta_{X/G}) \simeq H^i(X,\Theta_X)^G. $$
In particular, since $H^1(X/G,\Theta_{X/G})$ classifies the infinitesimal equisingular deformations of $X/G$, we see that, if the action is rigid, the quotient $X/G$ has no equisingular deformations. 

3) Consider the low term exact sequence of the local-to-global $\Ext$-spectral sequence on $Z := X/G$:
$$
0 \to H^1(Z, \Theta_Z) \to \Ext^1(\Omega^1_Z, \hol_Z) \to H^0(Z,\mathcal{E}xt^1(\Omega^1_Z, \hol_Z)) \to \ldots. 
$$
By Schlessinger's result \cite{schlessinger}  isolated quotient singularities in dimensions at least $3$ are infinitesimally rigid, i.e., $\mathcal{E}xt^1(\Omega^1_Z, \hol_Z) =0$. Therefore the above exact sequence shows that, if $G$ has only isolated fixed points on $X$, then
$$
H^1(Z, \Theta_Z) \simeq \Ext(\Omega^1_Z,\hol_Z). 
$$
This means that for $Z$ all infinitesimal deformations are equisingular. In particular, for showing that $Z$ is an infinitesimally rigid variety, it suffices to show that $H^1(X, \Theta_X)^G = 0$.

4) If $Z$ is singular, one has to make sure that there is a resolution of singularities $\widehat{Z} \ra Z$ such that $\widehat{Z}$ is infinitesimally rigid, since primarily we are interested in rigid {\em manifolds} (cf. Proposition \ref{resolution}).

\end{rem}

Let $G$ be a finite group acting holomorphically on the compact complex manifolds $X_1, \ldots, X_n$, then the diagonal subgroup 
$\Delta_G \simeq G$ acts in a natural  way  on the product $X_1 \times \ldots \times X_n$, in fact setting:
$$
g(x_1, \ldots, x_n):=(g x_1, \ldots, g x_n).
$$
It is natural to call this action   {\em the diagonal} $G$-action on the product $X_1 \times \ldots \times X_n$. 
K\"unneth's formula allows us to give a reformulation of the rigidity of the diagonal action in terms of the individual actions on the factors: 

\begin{proposition}\label{rigiddiag}
Let $G$ be a finite group acting holomorphically on  the compact complex manifolds $X_1, \ldots, X_n$. Then,  the diagonal action on
 $X:=X_1 \times \ldots \times X_n$ is (infinitesimally) rigid, if and only if: 
\begin{enumerate}
\item
the $G$-action on each $X_i$ is rigid and 
\item 
$\big[H^0(X_i,\Theta_{X_i}) \otimes H^1(X_j,\mathcal O_{X_j})\big]^G=0$ for all $i \neq j$.
\end{enumerate}
\end{proposition}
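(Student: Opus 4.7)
The plan is to apply the Künneth formula to the cohomology group $H^1(X,\Theta_X)$ and then take $G$-invariants, checking that the decomposition is $G$-equivariant.

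First I would decompose the tangent sheaf of the product. If $\pi_i \colon X \to X_i$ denotes the $i$-th projection, then there is a canonical isomorphism
\[
\Theta_X \;\simeq\; \bigoplus_{i=1}^n \pi_i^*\Theta_{X_i},
\]
and this isomorphism is $G$-equivariant with respect to the diagonal action (since each $g \in G$ acts factorwise and therefore commutes with the projections). Next, for each fixed $i$, the Künneth formula gives
\[
H^1(X,\pi_i^*\Theta_{X_i}) \;\simeq\; H^1(X_i,\Theta_{X_i}) \;\oplus\; \bigoplus_{j \neq i} H^0(X_i,\Theta_{X_i}) \otimes H^1(X_j,\mathcal O_{X_j}),
\]
using that $H^0(X_k,\mathcal O_{X_k}) = \CC$ for each compact connected $X_k$. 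Summing over $i$ yields
\[
H^1(X,\Theta_X) \;\simeq\; \bigoplus_{i=1}^n H^1(X_i,\Theta_{X_i}) \;\oplus\; \bigoplus_{i \neq j} H^0(X_i,\Theta_{X_i}) \otimes H^1(X_j,\mathcal O_{X_j}).
\]

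The key point to verify is that all these identifications are $G$-equivariant, where $G$ acts on each tensor factor $H^\ast(X_k,-)$ through its action on $X_k$. This is essentially built into the Künneth isomorphism: the map is induced by cup product of pullbacks along the projections, and the projections intertwine the diagonal $G$-action on $X$ with the individual $G$-actions on the $X_k$. Once this is granted, taking $G$-invariants commutes with the direct sum decomposition (since we are working in characteristic zero and $G$ is finite, the functor of $G$-invariants is exact), so
\[
H^1(X,\Theta_X)^G \;\simeq\; \bigoplus_{i=1}^n H^1(X_i,\Theta_{X_i})^G \;\oplus\; \bigoplus_{i \neq j} \bigl[H^0(X_i,\Theta_{X_i}) \otimes H^1(X_j,\mathcal O_{X_j})\bigr]^G.
\]

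Finally, the diagonal action is rigid (by Definition \ref{infG}) exactly when the left-hand side vanishes, and since the right-hand side is a direct sum of $G$-modules, this is equivalent to the vanishing of each individual summand, which is precisely conditions (1) and (2) of the statement. The only real content of the proof lies in making the $G$-equivariance of the Künneth decomposition explicit; the rest is bookkeeping. I do not anticipate a serious obstacle, since the decomposition $\Theta_X \simeq \bigoplus \pi_i^*\Theta_{X_i}$ is manifestly compatible with the diagonal action, and Künneth is natural with respect to morphisms of pairs.
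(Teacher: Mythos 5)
Your proof is correct and follows essentially the same route as the paper: decompose $\Theta_X$ as $\bigoplus_i p_i^*\Theta_{X_i}$, apply K\"unneth to each summand, and take $G$-invariants termwise. The extra care you take over the $G$-equivariance of the K\"unneth isomorphism is fine (the paper leaves it implicit), and note that passing invariants through a direct sum of $G$-modules needs no exactness argument --- it holds for purely formal reasons.
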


\begin{proof}
Let  $p_i \colon X \ra X_i$ be the projection onto the $i$-th factor. Then 
\[
H^1(X,\Theta_{X})=\bigoplus_{i=1}^{n}H^1(X, p_i^{\ast}\Theta_{X_i}). 
\]

For each $1 \leq i \leq n$ we have 
\[
H^1(X, p_i^{\ast}\Theta_{X_i})= H^1(X_i,\Theta_{X_i}) \oplus \bigg[H^0(X_i,\Theta_{X_i}) \otimes \bigg(\bigoplus_{\stackrel{j=1}{j \neq i}}^{n}
H^1(X_j, \mathcal O_{X_j}) \bigg)\bigg], 
 \]
according to K\"unneth's formula. The claim follows, by taking the $G$-invariant part. 
\end{proof}

\begin{rem} 
In the special case where the products $h^0(X_i,\Theta_{X_i}) \cdot h^1(X_j,\mathcal O_{X_j})$ vanish, a diagonal $G$-action is rigid if and only if the $G$-action
 on each factor $X_i$ is rigid. This happens for example if the complex manifolds $X_i$ are regular or of general type. 


\end{rem}

\section{Rigid actions on  elliptic curves}

In this paragraph  we study rigid diagonal $G$-actions on a product $E_1 \times \ldots \times E_n$ of elliptic curves under the additional assumption that $G$ acts faithfully on each factor.

Recall that any holomorphic map between elliptic curves, or more generally between complex tori, is induced by an affine linear map. Since the tangent bundle of an elliptic curve is  trivial, Proposition \ref{rigiddiag} has a particularly  simple   reformulation:

\begin{proposition}\label{quadraticdiff}
Let $G$ be a finite group acting holomorphically on  the elliptic curves  $E_1, \ldots, E_n$, then the following are equivalent:
\begin{enumerate}
\item
the diagonal $G$-action on
 $E_1 \times \ldots \times E_n$ is rigid, 
\item
none of the  quadratic differentials  $dz_i \otimes dz_j $  is  $G$-invariant. 
\end{enumerate}
\end{proposition}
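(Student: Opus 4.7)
The plan is to invoke Proposition~\ref{rigiddiag} and translate each of its two conditions into the non-invariance of a single quadratic differential, using Serre duality together with the triviality of the tangent bundle of an elliptic curve. Every holomorphic self-map of $E_i$ is affine linear, so each $g\in G$ acts as $z_i\mapsto a_i(g)z_i+b_i(g)$; in particular $\Theta_{E_i}$ and $\Omega^1_{E_i}$ are trivial, globally generated by $\partial/\partial z_i$ and $dz_i$, and the one-dimensional spaces $H^0(E_i,\Omega^1_{E_i})$ and $H^0(E_i,\Theta_{E_i})$ carry the reciprocal $G$-characters $a_i$ and $a_i^{-1}$.

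Next I would apply Serre duality (together with $\omega_{E_i}\cong\Omega^1_{E_i}$) to produce the $G$-equivariant identifications $H^1(E_j,\mathcal O_{E_j})^\vee\cong H^0(E_j,\Omega^1_{E_j})$ and $H^1(E_i,\Theta_{E_i})^\vee\cong H^0(E_i,\Omega^1_{E_i}\otimes\Omega^1_{E_i})$. Combined with the perfect $G$-equivariant pairing $H^0(E_i,\Theta_{E_i})\otimes H^0(E_i,\Omega^1_{E_i})\to\mathbb C$ coming from the evaluation map $\Theta\otimes\Omega^1\to\mathcal O$, this expresses every cohomology group appearing in Proposition~\ref{rigiddiag} as the dual of a tensor product of spaces of holomorphic $1$-forms. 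Since $\dim V^G=\dim(V^\vee)^G$ for any finite-dimensional complex representation of a finite group, each vanishing condition can be tested on the dual representation.

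Putting the pieces together: condition (1) of Proposition~\ref{rigiddiag}, namely $H^1(E_i,\Theta_{E_i})^G=0$, becomes $H^0(E_i,\Omega^1_{E_i}\otimes\Omega^1_{E_i})^G=0$, i.e.\ $dz_i\otimes dz_i$ is not $G$-invariant. Condition (2), namely $[H^0(E_i,\Theta_{E_i})\otimes H^1(E_j,\mathcal O_{E_j})]^G=0$ for $i\neq j$, dualizes to $[H^0(E_i,\Omega^1_{E_i})\otimes H^0(E_j,\Omega^1_{E_j})]^G=0$, i.e.\ $dz_i\otimes dz_j$ is not $G$-invariant. These two families of conditions together run over all ordered pairs $(i,j)$, yielding the stated equivalence.

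The only real obstacle is bookkeeping: one must verify that the Serre-duality and evaluation identifications are tracked as $G$-representations, not merely as vector spaces, so that the reciprocal characters of $\Theta$ and $\Omega^1$ are accounted for correctly. Once that is in place the argument is entirely formal.
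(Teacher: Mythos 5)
Your proposal is correct and follows essentially the same route as the paper: both reduce to Proposition~\ref{rigiddiag} and then use duality to convert each vanishing condition into the non-invariance of a quadratic differential $dz_i\otimes dz_j$ (the paper phrases the off-diagonal case via the Dolbeault group $H^1\big(E_i,(\Omega^1_{E_i})^{\otimes 2}\big)\cong\langle(dz_i\wedge d\overline{z}_i)\otimes dz_i\rangle$ and the $G$-invariance of the volume form, whereas you dualize $H^0(E_i,\Theta_{E_i})$ directly against $H^0(E_i,\Omega^1_{E_i})$ — a cosmetic difference). The bookkeeping of characters you flag as the only obstacle is handled correctly.
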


\begin{proof}
By duality, the rigidity conditions in Proposition \ref{rigiddiag} are  equivalent to 
\[
H^0(E_i,(\Omega_{E_i}^1)^{\otimes 2})^G =0 \qquad \makebox{and} \qquad  
\big[H^1\big(E_i, (\Omega_{E_i}^1)^{\otimes 2}\big) \otimes H^0(E_j, \Omega_{E_j}^1)\big]^G  = 0. 
\]
We use  that $H^0(E_i,(\Omega_{E_i}^1)^{\otimes 2}) = \langle dz_i^{\otimes 2} \rangle $ and 
$$H^1\big(E_i, (\Omega_{E_i}^1)^{\otimes 2}\big)  \simeq H_{\overline{\partial}}^{1,1}(E_i,\Omega_{E_i}^1) = \langle (dz_i \wedge d\overline{z}_i) \otimes dz_i \rangle,$$ 
to  rewrite the rigidity conditions  as: 
\[
\langle dz_i^{\otimes 2} \rangle^G =0 \qquad \makebox{and} \qquad  
 \langle (dz_i \wedge d\overline{z}_i) \otimes dz_i \otimes dz_j \rangle^G  = 0. 
\]
The claim follows since $G$ acts trivially on each $2$-form $dz_i \wedge d\overline{z}_i$.
\end{proof}

\begin{rem}\label{canrep}  
1) A holomorphic $G$-action of a finite group on an elliptic curve $E$   induces a natural one-dimensional representation 
\[
\varphi_E \colon G \to \GL\big(H^0(E, \Omega_{E}^1) \big), \qquad g \mapsto [dz \mapsto (g^{-1})^{\ast}dz ].
\]
It is  called  the \emph{canonical representation}. As it is one dimensional we identify it with its character 
$\chi_E$. If the linear part of  $g\in G$ is $a$, then  the value of $\chi_E(g)$ is equal to $\overline{a}$.

2) In terms of the characters $\chi_{E_i}$ the statement on the quadratic differentials $dz_i\otimes dz_j$ in Proposition \ref{quadraticdiff} translates to 
$\chi_{E_i} \cdot \chi_{E_j} \neq \chi_{triv}$, for all $1 \leq i,j\leq n$. 

3) It is well known that a $G$-action on an elliptic curve $E$ is rigid  if and only if 
$E/G\simeq \mathbb P^1$ and the Galois cover $\pi \colon E \to E/G\simeq \mathbb P^1$ is branched in three points $p_1$, $p_2$ and $p_3$. 
In this case the branching signatures $m_i$, which are defined to be the orders of the (cyclic) stabilizer groups $G_{x_i}$ for some  $x_i \in \pi^{-1}(p_i)$,  
are up to permutation equal to:
\[
[m_1,m_2,m_3]=[3,3,3], \quad [2,4,4], \quad \makebox{or} \quad [2,3,6], 
\]
see \cite[Chapter III, Lemma 3.8 b)]{miranda}. 

\end{rem}

Rigid group actions on compact Riemann surfaces  can be described purely in terms of group theory by \emph{Riemann's existence theorem}. 
In our situation we only need the following (much weaker) version of this theorem for elliptic curves.

\begin{proposition}\label{RET}
A finite group $G$ admits a rigid action  on an elliptic curve $E$ if and only if there are elements $g_1,g_2,g_3 \in G$ of order $m_i=\ord(g_i)$, which generate $G$, fulfill the relation 
$g_1g_2g_3=1_G$ and $[m_1,m_2,m_3]=[3,3,3]$, $[2,4,4]$ or $ [2,3,6]$.
\end{proposition}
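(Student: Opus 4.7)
The plan is to derive this from Remark~\ref{canrep}(3), which already reduces the rigidity of a $G$-action on $E$ to the existence of a Galois branched covering $\pi \colon E \to \mathbb P^1$ with exactly three branch points and with ramification signature one of $[3,3,3]$, $[2,4,4]$ or $[2,3,6]$. Translating this geometric data into a generating triple in $G$ is the tri-ramified case of Riemann's existence theorem, so the proof will be a standard monodromy computation in both directions.

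For the forward implication we would proceed as follows. Assume $G$ acts rigidly on $E$ and let $\pi \colon E \to E/G \simeq \mathbb P^1$ be the quotient map, with branch locus $B = \{p_1,p_2,p_3\}$ and branching signature $[m_1,m_2,m_3]$. Fix a base point $q \in \mathbb P^1 \setminus B$ and choose standard generating loops $\gamma_1,\gamma_2,\gamma_3 \in \pi_1(\mathbb P^1 \setminus B, q)$ satisfying $\gamma_1\gamma_2\gamma_3 = 1$. The restriction $\pi^{-1}(\mathbb P^1 \setminus B) \to \mathbb P^1 \setminus B$ is an \'etale Galois cover with deck group $G$, hence corresponds to a surjective monodromy homomorphism $\rho \colon \pi_1(\mathbb P^1 \setminus B, q) \twoheadrightarrow G$. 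Setting $g_i := \rho(\gamma_i)$ produces generators of $G$ satisfying $g_1 g_2 g_3 = 1_G$, and the identification $\ord(g_i) = m_i$ holds because $g_i$ acts on any preimage $x_i \in \pi^{-1}(p_i)$ as a generator of the cyclic stabilizer $G_{x_i}$, which by assumption has order $m_i$.

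For the converse we would reverse the construction. Given a triple $(g_1,g_2,g_3)$ as in the statement, the assignment $\gamma_i \mapsto g_i$ defines a surjective homomorphism $\pi_1(\mathbb P^1 \setminus B, q) \twoheadrightarrow G$, hence a connected \'etale Galois cover $C^\circ \to \mathbb P^1 \setminus B$ with deck group $G$. By the Grauert--Remmert extension theorem, this cover admits a unique normal compactification to a branched Galois cover $C \to \mathbb P^1$ of compact Riemann surfaces, and the hypothesis $\ord(g_i) = m_i$ is exactly what ensures that the ramification index above $p_i$ equals $m_i$. Riemann--Hurwitz then yields
\[
2g(C) - 2 = |G|\left(-2 + \sum_{i=1}^{3}\left(1 - \frac{1}{m_i}\right)\right) = 0,
\]
since $\sum_{i=1}^3 (1 - 1/m_i) = 2$ for each of the three admissible signatures. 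Thus $C$ has genus one, so $E := C$ is an elliptic curve carrying a faithful $G$-action with the required branching data, and Remark~\ref{canrep}(3) then certifies rigidity.

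The only place where any care is needed is the identification $\ord(g_i) = m_i$: in the forward direction this is the local monodromy around $p_i$, and in the reverse direction it is the input that guarantees the Grauert--Remmert extension produces a smooth point above $p_i$ with ramification of exactly the right order. Since the version of Riemann's existence theorem needed here is the very classical one for $\mathbb P^1$ minus three points, I do not expect a genuine obstacle.
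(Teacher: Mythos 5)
Your proof is correct and is precisely the standard Riemann existence theorem argument that the paper itself does not spell out but instead delegates to \cite[Chapter III]{miranda}, with Remark~\ref{canrep}(3) supplying the equivalence between rigidity and the three-branch-point condition and Riemann--Hurwitz pinning down the admissible signatures. Since the paper offers no proof of its own beyond this citation and the sketch in Remark~\ref{RemRET}, your write-up simply fills in the intended details; both directions are carried out correctly.
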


We refer to  \cite[Chapter III]{miranda} for details and mention only the following:  

\begin{rem}\label{RemRET} 
1) A triple of elements $V:=[g_1,g_2,g_3]$ like  in the Proposition  above is called a generating triple of $G$ with (branching) signature $[m_1,m_2,m_3]$.
\item 
Assume that  $E$  admits a rigid $G$-action, let 
$\pi \colon E \to E/G\simeq \mathbb P^1$ be the quotient map and let $\mathcal B:=\lbrace p_1, p_2, p_3 \rbrace$ be the set of branch points of $\pi$. 
The fundamental group of $\mathbb P^1\setminus \mathcal B$ is generated by three simple loops $\gamma_i$ around $p_i$ fulfilling a single relation 
$\gamma_1 \ast \gamma_2 \ast \gamma_3=1$. 
The  elements $g_i$ are obtained as the images of $\gamma_i$ under the monodromy homomorphism 
\[
\eta \colon \pi_1(\mathbb P^1\setminus \mathcal B, p) \to G. 
\]

2) The cyclic subgroups $\langle g_i \rangle$ and their conjugates provide the non-trivial stabilizer groups  of the $G$-action on $E$. Let $x_i \in E$ be a point with stabilizer $G_{x_i}=\langle g_i \rangle$, then  $g_i$  acts around $x_i$  as a rotation by 
\[
\exp\bigg(\frac{2\pi \sqrt{-1}}{m_i} \bigg). 
\]
This rotation constant is nothing else than  the  linear part of any affine transformation inducing $g_i$. 
In particular, the character $\chi_E$ (cf.  Remark  \ref{canrep}) can be read of directly  from the generating triple. 

3) The union of the non-trivial stabilizer groups will be denoted by $\Sigma_V$. Observe that  $\Sigma_V$ consists of the identity and all group elements which are not  translations.

\end{rem}

The further discussion heavily relies on the structure of the automorphism group of an elliptic curve $E$. 
Recall that $\Aut(E)$ is a semidirect product 
\[
\Aut(E)=E \rtimes \Aut_0(E), 
\]
where $\Aut_0(E) \simeq \mathbb Z_2$, $\mathbb Z_4$ or $\mathbb Z_6$.

\begin{proposition}\cite[Chapter III Proposition 1.12]{miranda} 
An elliptic curve with $\Aut_0(E)\simeq \mathbb Z_4$ is isomorphic to 
 $\mathbb C/ \mathbb Z[i]$. An elliptic curve with $\Aut_0(E)\simeq \mathbb Z_6$ is isomorphic to 
$\mathbb C/ \mathbb Z[\zeta_3]$. 
Here,  $ \mathbb Z[i]$ are the {\em Gaussian} and $\mathbb Z[\zeta_3]$ the {\em Eisenstein integers}. 
\end{proposition}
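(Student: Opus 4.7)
The plan is to pass to the universal cover and translate the statement into a problem about lattices in $\mathbb C$. Writing $E = \mathbb C/\Lambda$, every element of $\Aut_0(E)$ lifts to a holomorphic automorphism of $\mathbb C$ fixing the origin and preserving $\Lambda$, and any such lift must be linear, i.e.\ of the form $z \mapsto \alpha z$ with $\alpha \in \mathbb C^{\ast}$ satisfying $\alpha \Lambda = \Lambda$. In this way $\Aut_0(E)$ is identified with the finite multiplicative group $\{\alpha \in \mathbb C^{\ast} : \alpha \Lambda = \Lambda\}$, whose elements are automatically roots of unity.

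First I would treat the $\Aut_0(E)\simeq \mathbb Z_4$ case. A generator $\alpha$ of $\Aut_0(E)$ is of order $4$ in $\mathbb C^{\ast}$, hence a primitive fourth root of unity, i.e.\ $\alpha = \pm i$. In particular $i\Lambda = \Lambda$, so $\Lambda$ is stable under multiplication by $i$ and inherits the structure of a $\mathbb Z[i]$-module. Since $\Lambda$ is a free $\mathbb Z$-module of rank $2$ and $\mathbb Z[i]$ itself has $\mathbb Z$-rank $2$, $\Lambda$ has $\mathbb Z[i]$-rank $1$. Using that $\mathbb Z[i]$ is a principal ideal domain and that $\Lambda$ is torsion-free over it, one concludes $\Lambda = \omega\cdot \mathbb Z[i]$ for some $\omega \in \mathbb C^{\ast}$, and the biholomorphism $z \mapsto z/\omega$ descends to $E \simeq \mathbb C / \mathbb Z[i]$.

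The $\Aut_0(E)\simeq \mathbb Z_6$ case is entirely analogous: a generator now lifts to a primitive sixth root of unity $\zeta_6$, so $\Lambda$ is stable under multiplication by $\zeta_6$. From the identity $\zeta_6 = 1 + \zeta_3$ one sees that $\mathbb Z[\zeta_6]=\mathbb Z[\zeta_3]$, which is again a PID (it is the ring of integers of the imaginary quadratic field $\mathbb Q(\sqrt{-3})$, of class number one). The same rank argument then gives $\Lambda = \omega\cdot \mathbb Z[\zeta_3]$ for some $\omega \in \mathbb C^{\ast}$, and rescaling by $1/\omega$ yields $E \simeq \mathbb C / \mathbb Z[\zeta_3]$.

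There is no serious obstacle in this argument; it is really a sequence of bookkeeping steps. The one genuine input is the principality of the orders $\mathbb Z[i]$ and $\mathbb Z[\zeta_3]$, which is what allows one to upgrade the statement \emph{$\Lambda$ is a rank-one module over the appropriate imaginary quadratic ring} to \emph{$\Lambda$ is a free rank-one module}, thereby determining $E$ up to biholomorphism. The small subtlety worth stating explicitly is that any finite-order holomorphic automorphism of $\mathbb C$ fixing $0$ is multiplication by a root of unity of the same order, which pins down the possibilities $\alpha=\pm i$ and $\alpha=\zeta_6$ above.
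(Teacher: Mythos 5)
Your argument is correct and complete. Note that the paper itself gives no proof of this statement -- it is quoted from Miranda's book -- so the relevant comparison is with the proof given there. Miranda's Proposition III.1.12 proceeds by reduction theory of lattices: one normalizes $\Lambda=\mathbb Z+\mathbb Z\tau$ with $\tau$ in the standard fundamental domain for the modular group and checks directly that the existence of a unit $\alpha\neq\pm 1$ with $|\alpha|=1$ and $\alpha\Lambda=\Lambda$ forces $\tau=i$ or $\tau=\zeta_3$ (equivalently $\zeta_6$); this simultaneously shows that no orders of $\Aut_0(E)$ other than $2,4,6$ can occur. Your route is genuinely different and more algebraic: you make $\Lambda$ into a rank-one torsion-free module over the order $\mathbb Z[i]$, respectively $\mathbb Z[\zeta_6]=\mathbb Z[\zeta_3]$, and use that these rings are principal ideal domains to conclude $\Lambda=\omega\,\mathbb Z[i]$, respectively $\omega\,\mathbb Z[\zeta_3]$, after which rescaling by $1/\omega$ finishes the proof. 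This is shorter and isolates the genuine arithmetic input (class number one for the Gaussian and Eisenstein integers), at the price of taking the cyclic structure of $\Aut_0(E)$ as given rather than deriving it. Two steps you leave implicit are harmless: that $|\alpha|=1$ (hence $\alpha$ is a root of unity in a finite group) follows by comparing covolumes of $\Lambda$ and $\alpha\Lambda$, and the passage from ``rank-one torsion-free over a PID'' to ``free of rank one'' is exactly the structure theorem for finitely generated modules.
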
 
  
Assume now that $E$ admits a rigid faithful  action of a finite group  $G$. 
Then by   Remark \ref{canrep} (3), the group  $G$ has an  element of order $3,4$ or $6$ with a fixed point which up to translation is the origin of $E$.  
Whence $\Aut_0(E)\simeq \mathbb Z_4$ or $\mathbb Z_6$ and  $G$ must be also a semidirect product 
\[
G\simeq A\rtimes \mathbb Z_d,
 \]
with $d \in \{3,4,6\}$. The normal subgroup $A$ can be considered as a subgroup of  the group of $n$-torsion points 
$E[n] \simeq \mathbb Z_n^2$ for a suitable integer $n$. 

More precisely, we have the following:

\begin{proposition}\label{wallgrps}
A finite group $G$ admits a faithful rigid holomorphic action on an elliptic curve $E$, if and only if it is isomorphic to a semidirect product 
\[
A \rtimes_{\varphi_d} \mathbb Z_d, 
\]
where $d=3,4$ or $6$ and $A \leq \mathbb Z_n^2$ is a subgroup for some $n$,  invariant under the action 
\[
\varphi_d \colon  \mathbb Z_d \to \Aut (\mathbb Z_n^2), 
\]
defined  by:
\begin{itemize}
\item
$\varphi_3(1)(a,b)=(-b,a-b)$, 
\item
$\varphi_4(1)(a,b)=(-b,a)$ or
\item
$\varphi_6(1)(a,b)=(-b,a+b)$.
\end{itemize}
The possible  branching signatures $[m_1,m_2,m_3]$ of the cover $\pi \colon E \to E/G$, the  abelianisations of $G$ and the isomorphism types 
of $E$ are summarised in the table below:

{\begin{center}
\begin{tabular}{ c  c  c  c  }
  & $ $d=3$ ~ $ & $ ~ $d=4$ ~ $  & $ ~ $d=6$~ $   \\
 \hline \hline 
 $[m_1,m_2,m_3]$ & $ \quad   [3,3,3]  \quad   $ & $ \quad  [2,4,4]  \quad  $  & $ \quad  [2,3,6] \quad  $   \\

$G^{ab}$  & $ \quad \mathbb Z_3  $ or $ \mathbb Z_3^2   \quad $ & $ \quad \mathbb Z_4  $ or  $ \mathbb Z_2 \times \mathbb Z_4  \quad $  & $  \quad  \mathbb Z_6  \quad $  \\

  $E$ &  $\mathbb C/\mathbb Z[\zeta_3]$ & $\mathbb C/ \mathbb Z[i]$  &  $\mathbb C/ \mathbb Z[\zeta_3]$  \\
\hline
  \end{tabular}
  \end{center}
  }

\end{proposition}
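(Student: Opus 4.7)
The plan is to embed $G$ into the semidirect decomposition $\Aut(E) = E \rtimes \Aut_0(E)$, identify $A$ as the translation part of $G$, and match the residual cyclic action by direct coordinate computations in the Eisenstein or Gaussian lattice.

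For the forward direction, I would begin with a rigid faithful action and invoke Proposition~\ref{RET} to produce a generating triple $[g_1,g_2,g_3]$ with signature $[m_1,m_2,m_3] \in \{[3,3,3],[2,4,4],[2,3,6]\}$. Set $d := \max m_i \in \{3,4,6\}$, let $\ell \colon G \to \Aut_0(E)$ be the projection onto linear parts, and put $A := \ker \ell$. By Remark~\ref{RemRET}(2) each $g_i$ acts locally as a rotation of order $m_i$, so $g_i$ has order exactly $m_i$ and its linear part has order $m_i$; hence $\ell(G)$ is cyclic of order $d$. This forces $\Aut_0(E) \cong \mathbb Z_6$ when $d \in \{3,6\}$ and $\Aut_0(E) \cong \mathbb Z_4$ when $d = 4$, determining $E$ as in the table. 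Picking $i$ with $m_i = d$, the subgroup $\langle g_i \rangle$ is a section of $1 \to A \to G \to \mathbb Z_d \to 1$, so $G \cong A \rtimes \mathbb Z_d$. Since $A$ is a finite subgroup of the translation group $E$, it lies in $E[n] \cong \mathbb Z_n^2$ for some $n$.

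To pin down $\varphi_d$, I would use that conjugating the translation $z \mapsto z+a$ by the rotation $z \mapsto \zeta_d z$ produces $z \mapsto z + \zeta_d a$; thus the $\mathbb Z_d$-action on $A$ is multiplication by $\zeta_d$ inside $\mathbb Z[\zeta_d] \cong \mathbb Z[i]$ or $\mathbb Z[\zeta_3]$. A direct matrix computation then shows that in the $\mathbb Z$-bases $\{1,\zeta_3\}$, $\{1,i\}$, and $\{1,\zeta_6\}$ respectively, this multiplication coincides with the prescribed $\varphi_3$, $\varphi_4$, $\varphi_6$. For the converse, given $G = A \rtimes_{\varphi_d} \mathbb Z_d$ with the specified $E$, I would define a faithful embedding $G \hookrightarrow \Aut(E)$ by $(a,k) \mapsto [z \mapsto \zeta_d^k z + a]$ and factor the quotient map as $E \to E/A \to (E/A)/\mathbb Z_d$: the first map is an unramified isogeny onto an elliptic curve $E'$, and by $\varphi_d$-invariance of $A$ the $\mathbb Z_d$-action descends to the standard rotation by $\zeta_d$ on $E'$, whose quotient is $\mathbb P^1$ branched at three points with signature dictated by $d$. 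Rigidity then follows from Remark~\ref{canrep}(3).

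For the table, the signatures are immediate from the correspondence $d \leftrightarrow [m_1,m_2,m_3]$, and the identification of $E$ has already been made. For the abelianisations, I would use that by Riemann's existence theorem $G$ is a quotient of the orbifold fundamental group $\Gamma(m_1,m_2,m_3) = \langle \gamma_1,\gamma_2,\gamma_3 \mid \gamma_1\gamma_2\gamma_3 = \gamma_i^{m_i} = 1\rangle$, whose abelianisation computes to $\mathbb Z_3^2$, $\mathbb Z_2 \times \mathbb Z_4$, $\mathbb Z_6$ respectively; combined with the forced surjection $G^{ab} \twoheadrightarrow G/A = \mathbb Z_d$, only the entries listed in the table survive. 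The main obstacle will be the coordinate bookkeeping when identifying $\varphi_d$ with multiplication by $\zeta_d$: in particular, matching $\varphi_6$ requires using the fact that $\mathbb Z[\zeta_6] = \mathbb Z[\zeta_3]$ and working in the basis $\{1,\zeta_6\}$ rather than the more natural $\{1,\zeta_3\}$, since only then does the matrix of multiplication by $\zeta_6$ take the prescribed form $(a,b)\mapsto(-b,a+b)$.
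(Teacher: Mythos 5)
Your proposal is correct and follows essentially the same route as the paper: the forward direction via the decomposition $\Aut(E)=E\rtimes\Aut_0(E)$ with $A$ the kernel of the linear-part homomorphism and $\varphi_d$ identified as multiplication by $\zeta_d$ in the lattice basis, the converse via the explicit affine action $(a,k)\mapsto[z\mapsto\zeta_d^kz+a]$, and the abelianisations via the triangle-group quotient together with the surjection $G^{ab}\twoheadrightarrow\mathbb Z_d$. The only cosmetic difference is that you verify rigidity of the constructed action through the three-branch-point criterion after factoring through $E/A$, whereas the paper checks directly that $dz^{\otimes 2}$ is not invariant; both are immediate.
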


Before we proof the above proposition we observe the following:
\begin{rem}\label{tworems}

1) By Proposition  \ref{RET} a group $G=A \rtimes_{\varphi_d} \mathbb Z_d$ with a rigid action on an elliptic curve $E$ is a quotient of the  triangle group 
\[
\mathbb T(m_1,m_2,m_3)=  \langle a,b,c  ~ | ~ a^{m_1} = b^{m_2} =c^{m_3}= abc=1 \rangle,
\]
where $[m_1,m_2,m_3]$, $d$, and $E$ are according to the table above. Whence $G^{ab}$  is  a quotient of 
$\mathbb T(m_1,m_2,m_3)^{ab}$. Since  the canonical projection  $p \colon G \to \mathbb Z_d$ induces a surjective homomorphism 
$p^{ab} \colon G^{ab} \to \mathbb Z_d$, the isomorphism types of $G^{ab}$ in the table follow  from the well known formula 
$\mathbb T(m_1,m_2,m_3)^{ab} \simeq  \mathbb Z/k_1 \times \mathbb Z/k_2$,
where  
\begin{itemize}
\item  $k_1:=\gcd(m_1,m_2,m_3)$,   
\item  $k_2:=\lcm\big(\gcd(m_1,m_2), \gcd(m_2,m_3), \gcd(m_1,m_3)\big)$.
 \end{itemize}
2) It follows immediately from the above table that if a finite group $G$ admits a rigid action on an elliptic curve $E$, the isomorphism type of $E$ as well as  the branching signature 
of the cover $E\to E/G$ is uniquely determined already by the abelianisation of $G$. 

\end{rem}

An immediate geometric consequence of  Remark \ref{tworems} (2) is: 
\begin{corollary}\label{alliso}
Let $G$ be a finite group with a rigid  diagonal  action on a product of elliptic curves $E_1 \times \ldots \times E_n$, then the curves are all isomorphic to 
$\mathbb C/\mathbb Z[i]$ or they are all isomorphic to $\mathbb C/\mathbb Z[\zeta_3]$. 

Moreover, the branching signature $[m_1,m_2,m_3]$ is the same for each cover $\pi_i \colon E_i \to E_i/G$. 
\end{corollary}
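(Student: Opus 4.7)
The plan is to reduce the statement to Remark \ref{tworems}(2), which records that for a single rigid faithful $G$-action on an elliptic curve $E$, both the isomorphism type of $E$ and the branching signature of $\pi\colon E\to E/G$ are determined by the abelianization $G^{ab}$. Since $G^{ab}$ is an intrinsic invariant of the abstract group $G$ and does not depend on which factor of the product we look at, applying this observation to each $E_i$ in turn will yield both assertions of the corollary simultaneously.

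First I would invoke Proposition \ref{rigiddiag}: rigidity of the diagonal action on $E_1 \times \ldots \times E_n$ forces, in particular, the individual $G$-action on each $E_i$ to be rigid. Combined with the standing hypothesis of this section that $G$ acts faithfully on each factor, Proposition \ref{wallgrps} then applies to every $i$ and produces an integer $d_i \in \{3,4,6\}$ together with a subgroup $A_i \leq \mathbb Z_{n_i}^2$ such that $G \simeq A_i \rtimes_{\varphi_{d_i}} \mathbb Z_{d_i}$; moreover, $d_i$ already pins down the signature $[m_1,m_2,m_3]$ of $\pi_i \colon E_i \to E_i/G$ and the isomorphism type of $E_i$ via the table in that proposition.

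To conclude, I would read off from that same table that the three lists of admissible abelianizations, namely $\{\mathbb Z_3,\, \mathbb Z_3^2\}$ for $d=3$, $\{\mathbb Z_4,\, \mathbb Z_2 \times \mathbb Z_4\}$ for $d=4$, and $\{\mathbb Z_6\}$ for $d=6$, are pairwise disjoint; this is precisely the content of Remark \ref{tworems}(2). Since the isomorphism class of $G^{ab}$ is the same regardless of which $i$ we consider, this forces $d_1=\ldots=d_n=:d$, and the last two rows of the table then give that every $E_i$ is isomorphic to $\mathbb C/\mathbb Z[i]$ (when $d=4$) or to $\mathbb C/\mathbb Z[\zeta_3]$ (when $d=3$ or $d=6$), with a common signature. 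There is no serious obstacle here: once Proposition \ref{wallgrps} and Remark \ref{tworems}(2) are in hand, the corollary is essentially a bookkeeping consequence of the table.
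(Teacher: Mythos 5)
Your argument is correct and is exactly the route the paper intends: the corollary is stated there as an immediate consequence of Remark \ref{tworems}(2), and your write-up simply unpacks that — rigidity of each factor action via Proposition \ref{rigiddiag}, faithfulness from the standing assumption, Proposition \ref{wallgrps} for the table, and the pairwise disjointness of the admissible abelianizations to force a common $d$. No gaps.
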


\begin{proof}[Proof of Proposition \ref{wallgrps}.]
The formulas for $\varphi_d$ are immediately derived  from the semidirect product structure of $\Aut(E)\simeq E \rtimes \Aut_0(E)$.  
The claim about the abelianisations follows from Remark \ref{tworems}  (2).

We are left to show  that any group $A \rtimes_{\varphi_d} \mathbb Z_d$ has a rigid action on 
$E=\mathbb C/\mathbb Z[i]$ or 
$\mathbb C/\mathbb Z[\zeta_3]$. For this purpose, we consider the natural action 
\[
A \rtimes_{\varphi_d} \mathbb Z_d \to \Aut(E), \qquad (a,b,c) \mapsto \bigg [z \mapsto \zeta_d^cz + \frac{a + \zeta_d b}{n} \bigg], 
\]
which is clearly rigid, because 
$A \rtimes_{\varphi_d} \mathbb Z_d$ acts non trivially on the generator $dz^{\otimes 2}$ 
of $H^0(E,\omega_E^{\otimes 2})$.
\end{proof}

We end the section by the following useful result.
\begin{lemma}\label{orderel}
The order of an element of $A \rtimes_{\varphi_d} \mathbb Z_d$, which is not contained in $A$, is equal to the order of its image under 
the canonical projection $ p \colon A \rtimes_{\varphi_d} \mathbb Z_d \to \mathbb Z_d$.  
 \end{lemma}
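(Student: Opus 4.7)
My plan is to compute directly the $k$-th power of an arbitrary element not in $A$, where $k$ is the order of its image in $\mathbb{Z}_d$, and show that it is the identity. Combined with the obvious inequality $\ord(g) \geq \ord(p(g))$ coming from the fact that $p$ is a homomorphism, this gives equality.

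Write an element as $g=(a,c)$ with $a\in A$ and $c\in\mathbb{Z}_d$, $c\neq 0$, and set $k:=\ord(c)$ and $\psi:=\varphi_d(c)$. Using the semidirect product multiplication one gets by a straightforward induction
\[
g^k = \big((1+\psi+\psi^2+\cdots+\psi^{k-1})(a),\; kc\big) = \big((1+\psi+\cdots+\psi^{k-1})(a),\; 0\big).
\]
So the question reduces to showing that the endomorphism $1+\psi+\cdots+\psi^{k-1}$ vanishes on $A$.

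For this the most conceptual route is to recall from the proof of Proposition \ref{wallgrps} that $A\rtimes_{\varphi_d}\mathbb{Z}_d$ is embedded in $\Aut(E)=E\rtimes\Aut_0(E)$, where $\Aut_0(E)=\langle\zeta_d\rangle$ acts on the translation part $E$ by multiplication with $\zeta_d$. Under this embedding $\psi=\varphi_d(c)$ becomes multiplication by $\zeta_d^c$ on the $n$-torsion subgroup to which $A$ belongs. Since $c$ has order $k$ in $\mathbb{Z}_d$, $\zeta_d^c$ is a primitive $k$-th root of unity, and therefore
\[
1+\zeta_d^c+\zeta_d^{2c}+\cdots+\zeta_d^{(k-1)c}=\frac{(\zeta_d^c)^k-1}{\zeta_d^c-1}=0.
\]
Multiplying $a$ by this scalar shows $(1+\psi+\cdots+\psi^{k-1})(a)=0$, hence $g^k=1_G$.

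Combining with $\ord(g)\geq\ord(p(g))=k$ gives $\ord(g)=k$, as desired. The only mildly delicate point is the identification of $\varphi_d$ with scalar multiplication by $\zeta_d$: this is exactly the computation done (for $d=3$, verifiable analogously for $d=4,6$) in the proof of Proposition \ref{wallgrps}, where the lattice identification $(a,b)\mapsto(a+\zeta_d b)/n$ converts the explicit matrix formulas for $\varphi_d(1)$ into multiplication by $\zeta_d$ via the relation $1+\zeta_d+\zeta_d^2=0$ (respectively its analogues). Once this translation is in place, the rest of the argument is formal.
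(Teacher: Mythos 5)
Your proof is correct and follows essentially the same route as the paper's: the paper likewise reduces to the affine realization $f(z)=\omega_k z+b$ with $\omega_k$ a primitive $k$-th root of unity and concludes from $1+\omega_k+\cdots+\omega_k^{k-1}=0$ that $f^k=\mathrm{id}$, combined with the divisibility $\ord(p(f)) \mid \ord(f)$. Your extra step of first computing $g^k$ abstractly in the semidirect product before identifying $\varphi_d(c)$ with multiplication by $\zeta_d^c$ is just a more explicit phrasing of the same argument.
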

\begin{proof}
Let $f$ be such an element. Then the order of $p(f)$  divides the order of $f$ and  it suffices to show the following: 
let $\omega_k$ be a $k$-th primitive root of unity and $f(z)=\omega_kz+b$, then $f^k=id$. This claim follows immediately from the well known formula 
$\omega_k^{k-1}+ \ldots+\omega_k+1=0$ and the computation  
\[
f^k(z)=z+ (\omega_k^{k-1}+ \ldots+\omega_k+1)b=z.
\] 
\end{proof}

\section{Classifying rigid diagonal actions on products of elliptic curves}

In this section we shall show that there are only four candidates of finite groups, which we will call {\em exceptional groups} that may allow 
a free rigid action on the product of (at least 4) elliptic curves,  and on the other hand we shall see that all other groups admit in each dimension exactly one rigid (singular) quotient.


The following is a trivial but useful observation:
\begin{rem}
Since we assume the action of $G$ on $E_1 \times \ldots \times E_n$  to be diagonal,  it is free if and only if  for each $g \in G$ there exists an index  $1 \leq j \leq n$ such that $g$ acts on $E_j$ as a translation. 
\end{rem}
 This motivates the following definition:
\begin{definition}
Let $\psi \colon G \to \Aut(E)$ be a faithful action of a finite group on an elliptic curve.  We define the {\em translation group of} $\psi$ to be 
\[
T_{\psi} :=\lbrace g \in G ~ \big\vert ~ \psi(g) ~\makebox{is a translation} \rbrace. 
\]
\end{definition}

\begin{remark}\label{transgroup}
Let $G$ be a finite group.  

1) The translation group $T$ of an action $\psi \colon G \to \Aut(E)$ is a normal abelian subgroup of $G$. If the action is moreover rigid, then  $G \simeq T \rtimes  \mathbb Z_d$, where 
$d=3,4$ or $6$. 

2) Let $G$  be a finite group, admitting  a diagonal action on a product of elliptic curves $E_1 \times \ldots \times E_n$. Then the action is free if and only if 
\[
G = T_1 \cup \ldots \cup T_n, 
\]
where $T_i$ is the translation group of the action on the i-th factor.  
\end{remark}

A necessary condition for a group $A \rtimes_{\varphi_d} \mathbb Z_d$ to allow a rigid and free action on a product of elliptic curves is the existence of more than one 
 normal abelian subgroup with quotient  $\mathbb Z_d$. It turns out that there are only four of them  and in fact in all other cases the translation group of any rigid action is $A$.

\begin{proposition}\label{uniquetrans}
Let $G=A \rtimes_{\varphi_d} \mathbb Z_d$  be a finite group and $\psi \colon G \ra \Aut(E)$ be a rigid action on an elliptic curve $E$.  Then  $T_{\psi} = A$  except if $G$ is one of the following: 
\[
\mathbb Z_3^2, \quad  \mathbb Z_3^2 \rtimes_{\varphi_3}  \mathbb Z_3, \quad   \mathbb Z_2 \times \mathbb Z_4 \quad \makebox{or} \quad   \mathbb Z_2^2 \rtimes_{\varphi_4}  \mathbb Z_4.
\]
\end{proposition}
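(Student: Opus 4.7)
My plan is to show that $T_\psi$ and $A$ are both 2-generated abelian normal subgroups of $G$ of index $d$ with cyclic quotient of order $d$, and that unless $G$ is one of the four listed, $A$ is the unique such subgroup.

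First I would observe that $T_\psi$ coincides with the kernel of the canonical character $\chi_E \colon G \to \mathbb{C}^*$ from Remark \ref{canrep}; the image of $\chi_E$ consists of $d$-th roots of unity, so $T_\psi \trianglelefteq G$ with $G/T_\psi \cong \mathbb Z_d$. Since $T_\psi$ is a finite subgroup of the elliptic curve $E$, it sits inside $E[n] \cong \mathbb Z_n^2$ for some $n$, and is thus abelian and generated by at most two elements. By Proposition \ref{wallgrps} the subgroup $A$ shares all these properties. Hence it suffices to prove that $A$ is the unique $2$-generated abelian normal subgroup of $G$ with cyclic quotient of order $d$, for every $G$ outside the list.

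Any such subgroup $N$ must contain $[G,G]$, so corresponds to a subgroup of $G^{ab}$ of index $d$ with cyclic quotient of order $d$. Consulting the table of Proposition \ref{wallgrps}, whenever $G^{ab} \in \{\mathbb Z_3, \mathbb Z_4, \mathbb Z_6\}$ there is only one such subgroup of $G^{ab}$ (the trivial one), forcing $N = [G,G] = A$ and $T_\psi = A$. The remaining cases are $d=3$ with $G^{ab} = \mathbb Z_3^2$, and $d=4$ with $G^{ab} = \mathbb Z_2 \times \mathbb Z_4$.

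For $d=3$, $G^{ab} = \mathbb Z_3^2$: suppose $N \neq A$ is a second admissible subgroup. The images $\bar A, \bar N$ are distinct lines in $\mathbb F_3^2$, so $\bar A + \bar N = G^{ab}$, yielding $AN = G$. Since $[G,G] \le A \cap N$ and both $A$ and $N$ are abelian, each centralises $[G,G]$; using $AN = G$ this forces $[G,G] \le Z(G)$. Writing $A = \mathbb Z[\zeta_3]/\mathfrak a$ as a cyclic $\mathbb Z[\zeta_3]$-submodule of $E[n]$ (available because $\mathbb Z[\zeta_3]$ is a PID and $E[n] \cong \mathbb Z[\zeta_3]/n$), the centralisation condition becomes $(1-\zeta_3)^2 A = 0$, i.e.\ $\mathfrak a \supseteq \bigl((1-\zeta_3)^2\bigr)$; combined with $\mathfrak a \subseteq (1-\zeta_3)$ (equivalent to $G^{ab} = \mathbb Z_3^2$ by Remark \ref{tworems}) one gets $\mathfrak a \in \{(1-\zeta_3),\,(3)\}$, which produces exactly $G \in \{\mathbb Z_3^2,\,\He(3)\}$.

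For $d=4$, $G^{ab} = \mathbb Z_2 \times \mathbb Z_4$, the argument is analogous, but the main obstacle is that two distinct order-$2$ subgroups of $\mathbb Z_2 \times \mathbb Z_4$ only generate a subgroup of index two, so the easy centralisation argument only yields $[G,G] \le Z(AN)$ with $AN$ of index $2$ in $G$. Here the $2$-generated hypothesis on $N$ becomes essential: without it, extensions like $\mathbb Z[i]/(1-i)^k$ with $k \ge 3$ do admit an alternative abelian normal $N$, but one finds $N \cong \mathbb Z_2^3$, which cannot embed in any $\mathbb Z_n^2 \cong E[n]$ and is thus excluded. Imposing the rank-$\le 2$ constraint on $N = \mathbb Z[i]/\mathfrak a'$ and repeating the ideal-theoretic bookkeeping (now for $\mathbb Z[i]$) forces $\mathfrak a \in \{(1-i),\,(2)\}$, which yields precisely $G \in \{\mathbb Z_2 \times \mathbb Z_4,\,\mathbb Z_2^2 \rtimes_{\varphi_4} \mathbb Z_4\}$. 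Handling this $d=4$ case carefully, where the clean nilpotent-class-$2$ reduction from $d=3$ fails, is the hardest step.
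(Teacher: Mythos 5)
Your argument is correct in substance but takes a genuinely different route from the paper's. Both proofs start identically: any candidate $B=T_\psi$ is a normal abelian subgroup with $G/B\cong\mathbb Z_d$, hence contains $[G,G]$, which settles $d=6$ and, more generally, every case where $G^{ab}$ is cyclic of order $d$. From there the paper passes to the triangle groups: it quotes the classification of the normal subgroups of index $3$ in $\mathbb T(3,3,3)$ and of index $4$ in $\mathbb T(2,4,4)$ (exactly one is abelian, isomorphic to $\mathbb Z^2$; the others have abelianisation $\mathbb Z_3^2$ resp.\ $\mathbb Z_2^3$), combines this with $B\le E[n]\cong\mathbb Z_n^2$ to force $B=A$ when $|G|$ is large, and treats the small groups directly. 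You instead work entirely inside $G$: a second admissible subgroup $N$ forces $[G,G]$ to be centralised by $AN$, and translating this into the $\mathbb Z[\zeta_3]$- resp.\ $\mathbb Z[i]$-module structure of $A$ pins down the ideal $\mathfrak a$ with $A\cong\mathbb Z[\zeta_d]/\mathfrak a$. Your version is self-contained (no external facts about triangle groups) and explains structurally why precisely these four groups occur; the paper's is shorter because the triangle-group input is cited rather than proved, and it avoids any delicate case analysis for $d=4$.

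Two caveats on your $d=4$ case, which you leave as a sketch. First, the bookkeeping does close up, but it must be written out, since this is where the claim is actually proved: $[G,G]=(1-i)A\le Z(AN)$ with $[G:AN]=2$ gives $2(1-i)A=0$, i.e.\ $\mathfrak a\supseteq\bigl((1-i)^3\bigr)$, hence $\mathfrak a\in\{(1-i),\,(2),\,(2(1-i))\}$; in the last case the unique second candidate $N$ (the preimage of $\langle(1,2)\rangle\le\mathbb Z_2\times\mathbb Z_4$) is elementary abelian of order $8$, so it cannot embed in $E[n]\cong\mathbb Z_n^2$ and is discarded, leaving exactly $\mathbb Z_2\times\mathbb Z_4$ and $\mathbb Z_2^2\rtimes_{\varphi_4}\mathbb Z_4$. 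Second, your aside that $A=\mathbb Z[i]/(1-i)^k$ admits an alternative abelian normal $N$ for every $k\ge3$ is false for $k\ge4$: there the candidate $N$ is not abelian (abelianness already requires $2(1-i)A=0$), so the $\mathbb Z_2^3$ phenomenon occurs only for $k=3$. Neither point invalidates the strategy, but the first is a step that still has to be carried out rather than asserted.
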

These four groups we shall call {\em exceptional}.
%
Before we give a proof of Proposition \ref{uniquetrans}, we recall some structural properties  of  the exceptional groups:

\begin{rem}\label{structureHeis} \

(1) We denote the group  $\mathbb Z_3^2 \rtimes_{\varphi_3}  \mathbb Z_3$ by 
$\He(3)$, because it is isomorphic to the Heisenberg group  of 
upper triangular matrices
\[
\begin{pmatrix}
1 & a & c   \\
0 & 1 & b \\
0 & 0 & 1 \\
\end{pmatrix} \in \SL(3,\mathbb F_3). 
\]
The normal abelian subgroups $A_i \trianglelefteq \He(3)$ of index three  are the 
preimages of the index three subgroups of  $\mathbb Z_3^2$ under the surjective homomorphism 
\[
\alpha \colon \mathbb Z_3^2 \rtimes_{\varphi_3}  \mathbb Z_3 \to \mathbb Z_3^2, \qquad (a,b,c) \mapsto (a+b,c). 
\] 
They are all isomorphic to $\mathbb Z_3^2$ and the intersection of two of them equals the kernel of $\alpha$. 
Moreover, the kernel,  the center $C_3:=Z\big(\He(3)\big)\simeq \mathbb Z_3$ and the  commutator subgroup $[\He(3),\He(3)]$ are all the same. 

The groups $A_i$, together with the center  $C_3$ are all non-trivial  normal subgroups of $\He(3)$: 

\begin{center}
\begin{tikzpicture}[scale=.7]
  \node (one) at (0,2) {$\He(3)$};
  \node (a) at (-3,0) {$A_1$};
  \node (b) at (-1,0) {$A_2$};
  \node (c) at (1,0) {$A_3$};
  \node (d) at (3,0) {$A_4$};
  \node (zero) at (0,-2) {$C_3$};
  \draw (zero) -- (a) -- (one) -- (b) -- (zero) -- (c) -- (one) -- (d) -- (zero);
\end{tikzpicture}
\end{center}

Since the center $C_3$ is contained in all $A_i$'s, Proposition \ref{transgroup} (1) implies that it  acts by translations with respect to any rigid action 
$\psi \colon \He(3) \to \Aut(E)$.

(2) There are two normal subgroups of  $\mathbb Z_2^2 \rtimes_{\varphi_4}  \mathbb Z_4$ with quotient $\mathbb Z_4$. They are isomorphic to $\mathbb Z_2^2$ and 
obtained as the preimages of the corresponding subgroups $\langle (1,0) \rangle $ and 
$\langle (1,2) \rangle $ of $\mathbb Z_2 \times \mathbb Z_4$ under the surjective homomorphism 
\[
\beta \colon \mathbb Z_2^2 \rtimes_{\varphi_4}  \mathbb Z_4  \to \mathbb Z_2 \times \mathbb Z_4, \qquad (a,b,c) \mapsto (a+b,c).  
\] 
It follows immediately from Remark \ref{transgroup}, (2) that none of these groups can admit a free diagonal action on the product of elliptic curves.
\end{rem}

\begin{proof}[Proof of Proposition \ref{uniquetrans}.]

Let $B$ be a normal abelian subgroup of $G$ such that $G/B$ is cyclic of order $d$. Necessarily, the commutator subgroup $[G,G]$ must be contained in $B$. 

\underline{$d=6$}: by Proposition \ref{wallgrps} $[G,G]$ has  index six in $G$, this implies $B =[G,G] = A$. 

\underline{$d=4$}: here $G$ is a quotient of the triangle group  $\mathbb T(2,4,4)$  (cf. Remark \ref{tworems}) which has three normal subgroups of index four, one is abelian and isomorphic to $\mathbb Z^2$, the other two groups are non abelian with abelianisation $\mathbb Z_2^3$. 
If $|G|>16$, then $|B| >4$. The preimage of $B$ under $\phi \colon \mathbb T(2,4,4) \to G$ is a normal subgroup of index four and since $B$ is abelian, there is a surjection $\phi^{ab} \colon \phi^{-1}(B)^{ab} \to B$.  Assume now that  $\phi^{-1}(B)^{ab} \simeq \mathbb Z_2^3$. Then  $\phi^{ab}$ is an isomorphism from $\mathbb Z_2^3$ to $B$. A contradiction, since we assume that 
$B$ is a subgroup of $E[n] \simeq \ZZ_n^2$.  Thus $\phi^{-1}(B)^{ab} = \phi^{-1}(B) \simeq \mathbb Z^2$ and $B=A$.
Suppose   that $|G| \leq 16$ and $B \neq A$, then $B$ is a quotient of $\mathbb Z_2^2$ and  $G$  is equal to 
$\mathbb Z_2 \times \mathbb Z_4$ or  $\mathbb Z_2^2 \rtimes_{\varphi_4} \mathbb Z_4$.

\underline{$d=3$}: if $|G| >27$, then $|B| >9$. The preimage of $B$ under $\phi \colon \mathbb T(3,3,3) \to G$ is a normal subgroup of index three. Since $B$ is abelian, there is a surjection 
$\phi^{-1}(B)^{ab} \to B$. It is known that $\mathbb T(3,3,3)$ 
 has exactly  four normal subgroups of index three. One of them is isomorphic to  $\mathbb Z^2$ and the others are non-abelian, with abelianisation 
$\mathbb Z_3^2$. It follows that $\phi^{-1}(B)^{ab} = \phi^{-1}(B) \simeq \mathbb Z^2$ and $B=A$. Suppose that $|G| \leq 27$ and $B \neq A$, then $B$ is a quotient of $\mathbb Z_3^2$. 
We conclude that $G$ has order $9$ or $27$ and  is equal to 
$\mathbb Z_3^2$ or  $\He(3)=\mathbb Z_3^2 \rtimes_{\varphi_3} \mathbb Z_3$.
\end{proof}

\begin{corollary}\label{nonexfixed}
Assume that $G$ is not exceptional and admits a rigid diagonal action on $E^n$.  Then: 
\begin{enumerate}
\item
the action is not free;
\item 
the maps $g(z_1), \ldots, g(z_n)$ have the same linear part for all $g \in G$.
\end{enumerate}
\end{corollary}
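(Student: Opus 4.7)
The plan is to derive both assertions from Proposition \ref{uniquetrans}: for a non-exceptional $G$, every rigid faithful action on an elliptic curve has translation group equal to the (unique in this case) normal abelian subgroup $A \trianglelefteq G$ with cyclic quotient $G/A \cong \ZZ_d$, $d \in \{3,4,6\}$. For (1), I invoke Remark \ref{transgroup}(2): the diagonal action on $E^n$ is free if and only if $G = T_1 \cup \cdots \cup T_n$, where $T_i$ denotes the translation group of the $i$-th factor action. Each factor action is rigid by Proposition \ref{rigiddiag}(1) and faithful by the standing assumption of the section, so Proposition \ref{uniquetrans} forces $T_i = A$ for every $i$. The union then collapses to $A$, which is impossible since $[G:A] = d \geq 3$.

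For (2) I will work with the linear-part homomorphisms $\lambda_i \colon G \to \Aut_0(E)$, $g \mapsto a_i(g)$. Each $\lambda_i$ has kernel $T_i = A$ and image the cyclic subgroup $\langle \zeta_d \rangle \subseteq \Aut_0(E)$ of order $d$, so it descends to an isomorphism $\bar\lambda_i \colon G/A \to \langle \zeta_d \rangle$. Comparing two indices, the composition $\bar\lambda_j \circ \bar\lambda_i^{-1}$ is an automorphism of $\langle \zeta_d \rangle \cong \ZZ_d$. Since $d \in \{3,4,6\}$ one has $\Aut(\ZZ_d) = (\ZZ/d)^{\times} = \{\pm 1\}$, so for every pair $(i,j)$ only two possibilities remain: $\lambda_j = \lambda_i$ or $\lambda_j = \lambda_i^{-1}$.

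The rigidity criterion from Remark \ref{canrep}(2) now finishes the argument. It asserts $\chi_{E_i} \cdot \chi_{E_j} \neq \chi_{triv}$ for all $i,j$, and since $\chi_{E_i}(g) = \overline{a_i(g)} = \lambda_i(g)^{-1}$ this reads $\lambda_i \lambda_j \neq 1$, which excludes $\lambda_j = \lambda_i^{-1}$. Hence $\lambda_j = \lambda_i$ for all $i,j$, which is precisely the claim that every $g \in G$ acts with the same linear part on each factor. The one place where non-exceptionality is really being used is the identification $T_i = A$ in Proposition \ref{uniquetrans}; after that, the small size of $(\ZZ/d)^{\times}$ in the three relevant cases does the rest of the work.
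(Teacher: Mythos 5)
Your proposal is correct and follows essentially the same route as the paper: part (1) via $T_i=A$ from Proposition \ref{uniquetrans} combined with the freeness criterion of Remark \ref{transgroup}(2), and part (2) by observing that each linear-part character is a faithful character of $G/A\simeq \mathbb Z_d$, that there are only two such (since $\varphi(d)=2$ for $d=3,4,6$, which is your $(\mathbb Z/d)^{\times}=\{\pm 1\}$ phrased as the paper's ``only two primitive $d$-th roots of unity''), and that rigidity excludes the conjugate pair. No gaps.
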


\begin{proof}
We know that $G = A \rtimes_{\varphi_d} \mathbb Z_d$ with $d=3,4$ or $6$.

(1) By Proposition \ref{uniquetrans} we have $T_i = A$ for all $i$. The claim follows now from Remark \ref{transgroup} (2). 

(2) We denote the curve 
$E$ at position $i$ by $E_i$. 
According to Remark \ref{canrep} the value of the character $\chi_{E_i}(g)$ is the complex conjugate of the linear part of $g(z_i)$.  
Proposition \ref{uniquetrans} implies that $\ker(\chi_{E_i}) =A$,  i.e.  we can 
regard $\chi_{E_i}$ as a faithful character of $G/A \simeq \mathbb Z_d$. Such a character is defined by multiplication with a $d$-th primitive root of unity. 
For $d=3,4$ and $6$ there are only two primitive roots of unity, namely
$\zeta_d$ and $\zeta_d^{-1}$. Since we assume that the action is rigid, by  Remark  \ref{canrep}   it holds that
$\chi_{E_i} \cdot \chi_{E_j}\neq \chi_{triv}$ and  it follows that  
 all characters $\chi_{E_i}$ are the same. 
\end{proof}
 The following result shows that for the non exceptional case the situation is quite simple, since for each $n$ and each $d$ there is only one possible quotient, namely $X_{n,d}:=E^n/\mathbb Z_d$.
\begin{theorem}\label{oneisoclass}
Assume that $G=A \rtimes_{\varphi_d} \mathbb Z_d$ admits a rigid  diagonal action on $E^n$. If $G$
is not exceptional, then the quotient  
$E^n/G$ is isomorphic to $X_{n,d}:=E^n/\mathbb Z_d$, where $\mathbb Z_d$ acts on $E^n$ by multiplication with $\zeta_d \cdot \Id$. 
\end{theorem}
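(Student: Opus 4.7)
The plan is to reduce the statement to a concrete normal form for the action and then exhibit $E^n/G \cong X_{n,d}$ through a translation of coordinates followed by a change of lattice. By Corollary \ref{nonexfixed} every element $g \in G$ acts on each factor $E_i$ with the same linear part $\zeta_d^{c}$, depending only on the image $c \in \mathbb Z_d$ of $g$. Fix a lift $\sigma \in G$ of the generator of $\mathbb Z_d$; then on $E_i$ one has $\sigma(z_i) = \zeta_d z_i + s_i$ for some $s_i \in E$, and each $a \in A$ acts on $E_i$ by a translation $z_i \mapsto z_i + \tau_i(a)$. The semidirect product relation $\sigma a \sigma^{-1} = \varphi_d(1)(a)$ in $G$ forces the identity $\tau_i\big(\varphi_d(1)(a)\big) = \zeta_d \cdot \tau_i(a)$, so each injective homomorphism $\tau_i\colon A \to E$ is $\mathbb Z[\zeta_d]$-linear, where $A$ carries the $\mathbb Z[\zeta_d]$-module structure induced by $\varphi_d$ and $E = \mathbb C/\mathbb Z[\zeta_d]$ the canonical one.

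First, I would eliminate the translation parts from the $\sigma$-action. Since multiplication by $1-\zeta_d$ is surjective on the divisible group $E$, for each $i$ there exists $u_i \in E$ with $(1-\zeta_d)u_i = s_i$. A suitable coordinate translation by $(u_1,\dots,u_n)$ is a biholomorphism of $E^n$ that preserves the translation form of the $A$-action and conjugates $\sigma$ into the pure diagonal action $\zeta_d \cdot \Id$. After this reduction, $E^n/A$ is the complex torus $\mathbb C^n/L$, where $L \subset \mathbb C^n$ is the lattice generated by $\mathbb Z[\zeta_d]^n$ together with lifts of the diagonal translations $(\tau_1(a),\dots,\tau_n(a))$ for $a \in A$.

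Next, I would exploit the $\mathbb Z[\zeta_d]$-linearity of the $\tau_i$ to observe that $L$ is stable under multiplication by $\zeta_d$, hence is a finitely generated torsion-free $\mathbb Z[\zeta_d]$-module of rank $n$ sitting inside $\mathbb C^n$. Since $\mathbb Z[\zeta_d]$ is a principal ideal domain for $d \in \{3,4,6\}$, the module $L$ is free of rank $n$, and any $\mathbb Z[\zeta_d]$-basis $l_1,\dots,l_n$ of $L$ is simultaneously a $\mathbb C$-basis of $\mathbb C^n$, because $1$ and $\zeta_d$ are $\mathbb R$-independent and $l_1, \zeta_d l_1, \dots, l_n, \zeta_d l_n$ is an $\mathbb R$-basis of $\mathbb C^n$. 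The $\mathbb C$-linear extension $\hat\varphi\colon \mathbb C^n \to \mathbb C^n$ of $l_j \mapsto e_j$ maps $L$ bijectively onto $\mathbb Z[\zeta_d]^n$ and, being $\mathbb C$-linear, commutes with multiplication by $\zeta_d$. Passing to the quotient gives a biholomorphism $E^n/A = \mathbb C^n/L \to \mathbb C^n/\mathbb Z[\zeta_d]^n = E^n$ intertwining the induced $\mathbb Z_d$-actions, and dividing further by $\mathbb Z_d$ yields $E^n/G \cong E^n/\langle \zeta_d \cdot \Id\rangle = X_{n,d}$.

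The step I expect to be the main obstacle is the identification of $E^n/A$ with $E^n$: it rests on two arithmetic inputs that need to be singled out, namely the conjugation identity $\tau_i \circ \varphi_d(1) = \zeta_d \cdot \tau_i$ making $L$ into a $\mathbb Z[\zeta_d]$-module, and the fact that $\mathbb Z[\zeta_d]$ happens to be a PID exactly for the three relevant values $d = 3, 4, 6$. Everything else is a direct consequence of the normal form of the action produced by Proposition \ref{uniquetrans} and Corollary \ref{nonexfixed}.
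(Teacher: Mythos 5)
Your argument is correct, and its skeleton is the same as the paper's: pass to the intermediate quotient $E^n/A$ (using Proposition \ref{uniquetrans} to know that $A$ is the translation group on every factor, so $E^n/A$ is a complex torus), observe that $G/A\simeq \mathbb Z_d$ acts on it with analytic representation $\zeta_d\cdot\Id$ by Corollary \ref{nonexfixed}, and then identify this torus with $E^n$ compatibly with the $\mathbb Z_d$-action. The difference is in how that last identification is obtained. The paper simply cites \cite[Corollary 13.3.5]{BL} for the statement that a complex torus carrying an automorphism acting as $\zeta_d\cdot\Id$ ($d=3,4,6$) is isomorphic to $E^n$ with $E=\mathbb C/\mathbb Z[\zeta_d]$, whereas you prove this from scratch: the conjugation relation $\tau_i\circ\varphi_d(1)=\zeta_d\cdot\tau_i$ makes the lattice $L$ of $E^n/A$ a torsion-free $\mathbb Z[\zeta_d]$-module of rank $n$, which is free because $\mathbb Z[\zeta_d]$ is a PID for the three relevant values of $d$, and a $\mathbb Z[\zeta_d]$-basis of $L$ is automatically a $\mathbb C$-basis of $\mathbb C^n$, giving the desired $\zeta_d$-equivariant isomorphism. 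This is essentially the standard proof of the cited result, so you gain self-containedness at the cost of length; you are also more careful than the paper on one small point, namely that the translation part of a lift of the generator of $\mathbb Z_d$ must first be normalized away, which you do via surjectivity of $1-\zeta_d$ on the divisible group $E$. No gaps.
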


\begin{proof}
Since $G$ is not exceptional, the normal subgroup $A$  is  the translation group $T_i$ of the action on each factor of the product $E^n$. 
In particular   $E^n/A$ is an abelian variety and by construction $\mathbb Z_d \simeq G/A$ acts on $E^n/A$ by multiplication with  $\zeta_d \cdot \Id$. This implies that $E^n/A$ is isomorphic to $E^n$, according  to \cite[Corollary 13.3.5]{BL}. 
\end{proof}

\begin{rem}\label{Beau}
In \cite{beauville} Beauville considered the varieties $X_{d,d}$ for $d=3,4$ and $6$. He explains that these singular varieties 
admit rigid, simply connected Calabi-Yau $d$-folds as resolutions of singularities. 
\end{rem}

\section{The exceptional groups}

In contrast to Theorem \ref{oneisoclass} the situation for the exceptional groups is more involved.  In this chapter we  analyse it in dimensions three and four. 
In Proposition  \ref{norigid}  we shall see that
 rigid, free diagonal actions do not exist in dimension three. 
 However we can drop the freeness assumption and classify rigid non-free actions. It turns out that the quotients by the groups 
 $\mathbb Z_2 \times \mathbb Z_4$ and $\mathbb Z_2^2 \rtimes_{\varphi_4} \mathbb Z_4$ have non canonical singularities and therefore we do not consider it further.

 In case of the groups $\He(3)$ and $\mathbb Z_3^2$, the  quotients will be singular, but as we shall see only canonical cyclic quotient singularities of type 
$ \frac{1}{3}(1,1,1)$ and $\frac{1}{3}(1,1,2)$

 We study this case in detail and discover 
 besides Beauville's example $X_{3,3}$,  other interesting rigid canonical threefolds and relations among them.  
From dimension four on, the groups $\He(3)$ and $\mathbb Z_3^2$ allow rigid free  diagonal actions.  
Their existence for $\mathbb Z_3^2$  has already been observed  in \cite[Theorem 3.4]{rigidity}. We show that  each exceptional  group gives  precisely one isomorphism class of a smooth rigid quotient  fourfold $E^4/\mathbb Z_3^2$  and 
 $E^4/\He(3)$. 
We prove that these manifolds  are non-isomorphic by showing that they are even  topologically distinct, i.e. non homeomorphic. 

\begin{rem}
If we consider rigid actions of the exceptional groups  $\mathbb Z_2 \times \mathbb Z_4$ and $\mathbb Z_2^2 \rtimes_{\varphi_4} \mathbb Z_4$ on $E^4$, then as observed before the action cannot be free, hence we obtain  singular quotients, which have canonical singularities. Classifying these quotients as well as suitable resolutions of singularities of those should yield new interesting examples.
\end{rem}
 
 \subsection{Free rigid diagonal actions on $E^4$} \ 
 
We start with the following quite easy but useful Proposition

\begin{proposition}\label{norigid} \
Assume that $G=\He(3)$ or $\mathbb Z_3^2$  admits a rigid diagonal action on a product $E^n$. Then the action is free, if and only if 
each of the four normal subgroups of $G$ of index three is equal to  the translation subgroup $T_i$ on at least one factor.
In particular,  $n\geq 4$. 
\end{proposition}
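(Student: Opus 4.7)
My plan is to reduce the statement to a purely group-theoretic counting inside $G$. By Remark \ref{transgroup} (2), the diagonal action on $E^n$ is free if and only if $G$ coincides with the union $T_1 \cup \ldots \cup T_n$ of the translation subgroups of the individual actions. Since both exceptional groups $\mathbb Z_3^2$ and $\He(3)$ realize their rigid actions via the semidirect decomposition $A \rtimes_{\varphi_3} \mathbb Z_3$ with $d = 3$ (Proposition \ref{wallgrps}), every $T_i$ is a normal subgroup of $G$ with cyclic quotient $G/T_i \simeq \mathbb Z_3$, i.e.\ one of the four normal subgroups of index three. For $\mathbb Z_3^2$ these are the four order-three subgroups, and for $\He(3)$ they are the four subgroups $A_1, \ldots, A_4$ described in Remark \ref{structureHeis} (1).

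The core of the argument is then a simple inclusion-exclusion count. In $\mathbb Z_3^2$, any two distinct subgroups of order three meet only in the neutral element, so the union of all four of them has cardinality $1 + 4 \cdot 2 = 9 = |\mathbb Z_3^2|$, whereas removing any one of them leaves only $1 + 3 \cdot 2 = 7$ elements. In $\He(3)$, the surjection $\alpha \colon \He(3) \to \mathbb Z_3^2$ shows that any two distinct $A_i$ intersect precisely in the center $C_3 = \ker(\alpha)$, since distinct order-three subgroups of $\mathbb Z_3^2$ meet only in $0$. Hence each $A_i \setminus C_3$ contributes $9 - 3 = 6$ new elements, giving
$|A_1 \cup \ldots \cup A_4| = 3 + 4 \cdot 6 = 27 = |\He(3)|$, while the union of any three of the $A_i$'s contains only $21$ elements.

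Combining these observations yields the claimed equivalence: $G = T_1 \cup \ldots \cup T_n$ holds if and only if every one of the four index-three normal subgroups appears among the $T_i$. Since there are four such subgroups and each index contributes at most one of them, this immediately forces $n \geq 4$. I do not expect any real obstacle here; once the normal subgroup structure of the two exceptional groups recalled in Remark \ref{structureHeis} is in hand, the argument is purely elementary counting.
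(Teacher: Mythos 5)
Your proof is correct and follows essentially the same route as the paper: reduce via Remark \ref{transgroup} (2) to the condition $G = T_1 \cup \ldots \cup T_n$, identify each $T_i$ with one of the four normal index-three subgroups, and count that the union of any three of them has only $7$ (resp.\ $21$) elements. The only difference is that you spell out the inclusion-exclusion count explicitly, which the paper leaves implicit.
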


\begin{proof}
By Remark \ref{transgroup} (2),  the action is free if and only if
\begin{equation}\label{unionTi}
G=T_1 \cup \ldots \cup T_n.  
\end{equation}
Recall that each $T_i$ is equal to one of the four normal subgroups $A_1, \ldots, A_4$ of $G$ of index three. 
Clearly,  for (\ref{unionTi}) to hold,  all $A_i$ must appear in  the union,  because 
the union of less than four of the $A_i$'s  
consists of at most $7$ elements if  $G=\mathbb Z_3^2$ and of at most $21$ elements if $G=\He(3)$.

\end{proof}

We need the following Proposition only in a special case but for further use we prefer to state it in greater generality:
\begin{proposition}\label{coverf}
Let $G$ be a finite group acting holomorphically on  the compact complex manifolds $X_1, \ldots, X_n$,  let $A\trianglelefteq G$ be a normal subgroup and $Y_i:=X_i/A$.

1) There is a commutative square of finite holomorphic maps:
$$
\xymatrix{
X_1\times \ldots \times X_n  \ar[r]  \ar[d] & Y_1 \times \ldots \times Y_n, \ar[d] \\
(X_1\times \ldots \times X_n)/G \ar[r]_{f}&  (Y_1\times \ldots \times Y_n)/(G/A), 
}
$$
where the degree of $f$ is $\vert A \vert^{n-1}$.

2) If $n\geq 2$ the covering $f$  is Galois if and only if $A$ is contained in the center of $G$.  In this case the Galois group of $f$ is isomorphic to 
$A^{n-1}$. 

\end{proposition}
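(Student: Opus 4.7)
For part 1, the plan is to set $X := X_1\times\cdots\times X_n$ and $Y := Y_1\times\cdots\times Y_n$, and identify the top horizontal map as the componentwise quotient by $A^n$ (with the $i$-th factor acting on $X_i$ alone). Because $A\trianglelefteq G$ and the induced $A$-action on each $Y_i$ is trivial, the diagonal $G$-action on $X$ descends to $Y$ and factors through the diagonal $(G/A)$-action. This will make the composite $X\to Y\to Y/(G/A)$ invariant under the diagonal $G$-action on $X$ and so induce the desired map $f$, establishing commutativity of the square. The degree will then follow by multiplicativity along the two paths from $X$ to $Y/(G/A)$: $|A|^n\cdot(|G|/|A|) = |G|\cdot\deg(f)$, hence $\deg(f) = |A|^{n-1}$.

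For part 2, the strategy is to realise $f$ as an intermediate step of a Galois cover and invoke the standard correspondence between normal subgroups and Galois subextensions. Introduce $\tilde{G}\subset\Aut(X)$ as the subgroup generated by the componentwise $A^n$ and the diagonal copy of $G$. Since $A\trianglelefteq G$, conjugation of $(a_1,\ldots,a_n)\in A^n$ by a diagonal element of $G$ stays in $A^n$, so $A^n\trianglelefteq\tilde{G}$. Inside $\Aut(X)$ the intersection $A^n\cap G$ is the diagonal $\Delta(A)$, so $|\tilde{G}|=|A|^{n-1}\cdot|G|$, and one checks $X/\tilde{G}=Y/(G/A)$; thus $f$ is the intermediate step in the chain $X\to X/G\to X/\tilde{G}$. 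The standard fact that $X/H_1\to X/H_2$ for $H_1\subseteq H_2$ is Galois iff $H_1\trianglelefteq H_2$ then yields: $f$ is Galois iff $G\trianglelefteq\tilde{G}$, with Galois group $\tilde{G}/G$.

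The crux will be to pin down the condition $G\trianglelefteq\tilde{G}$. Since $G$ and $A^n$ generate $\tilde{G}$, this reduces to $A^n$ normalising the diagonal $G$. A direct computation in $\Aut(X)$ shows that $(a_1,\ldots,a_n)\cdot g\cdot(a_1,\ldots,a_n)^{-1}$ acts by $(x_i)\mapsto(a_i g a_i^{-1} x_i)$, which is diagonal iff $a_i g a_i^{-1}$ is independent of $i$; choosing all but one $a_j$ trivial will force $aga^{-1}=g$ for every $a\in A$ and $g\in G$, i.e.\ $A\subseteq Z(G)$, and the converse is immediate. In that case $A^n$ is abelian, and the map $(a_1,\ldots,a_n)\mapsto(a_1 a_n^{-1},\ldots,a_{n-1}a_n^{-1})$ will yield $\tilde{G}/G\cong A^n/\Delta(A)\cong A^{n-1}$. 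The main subtlety I expect is carefully distinguishing the two copies of ``diagonal $A$'' inside $\Aut(X)$ (the one sitting in the diagonal $G$, and the one sitting in $A^n$), which must be identified correctly for the count and the normality argument to go through.
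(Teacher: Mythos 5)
Your proposal is correct and follows essentially the same route as the paper: the group you call $\tilde{G}$ is exactly the paper's $\langle \Delta_G, A^n\rangle$, the order count $|A|^{n-1}|G|$, the reduction of Galoisness to $\Delta_G\trianglelefteq\tilde{G}$ via the fundamental theorem of Galois theory, the conjugation computation with all but one $a_j$ trivial forcing $A\subseteq Z(G)$, and the identification $\tilde{G}/\Delta_G\simeq A^n/\Delta_A\simeq A^{n-1}$ all appear verbatim in the paper's argument. The only cosmetic difference is that you compute $|\tilde{G}|$ from $A^n\cap\Delta_G=\Delta_A$ while the paper uses the surjection $\Delta_G\to\tilde{G}/A^n$ with kernel $\Delta_A$.
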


\begin{proof} 
(1) is obvious.

(2)  Consider the composition of covers 
\[
h \colon X_1\times \ldots \times X_n \to Y_1 \times \ldots \times Y_n \to (Y_1\times \ldots \times Y_n)/(G/A).
\]

We claim that $h$ is always Galois. Observe that the elements of the groups  $\Delta_G$ and $A^n$ act as  deck transformations of $h$ and therefore 
in order to show that $h$ is Galois, it suffices to show that the cardinality of  $\langle \Delta_G, A^n \rangle$ equals $\deg (h) = |A|^{n-1}|G|$.  Since $A$ is a normal subgroup of $G$, also $A^n$ is a normal subgroup of $\langle \Delta_G, A^n \rangle$. Clearly the natural homomorphism 
\[
\Delta_G \to \langle \Delta_G, A ^n \rangle/A^n
\]
is surjective and its kernel is $\Delta_A \simeq A$.  Thus we see that  $\langle \Delta_G, A^n \rangle$ has $|A|^{n-1}|G|$  elements.

 By the fundamental theorem of Galois theory, $f$ is Galois if and only if $\Delta_G$ is a normal subgroup of $\langle \Delta_G, A^n\rangle$. 
This  is certainly the case if   $A\leq Z(G)$. Conversely, assume that $\Delta_G$ is normal. Let $a\in A$, then for all $g\in G$ there exists an element $g'\in G$, such that 
\[
(a,1,\ldots,1 ) \circ (g, \ldots,g) \circ (a^{-1},1,\ldots,1) =(g',\ldots,g'). 
\]
This implies $aga^{-1}=g'=g$ i.e. $a \in Z(G)$. 

Assume now that   $f$ is Galois. Then its Galois group is $\langle \Delta_G, A^n \rangle/\Delta_G$, which is isomorphic to  $A^{n-1}$. In fact, 
the  surjective homomorphism
\[
A^n \to \langle \Delta_G, A^n \rangle/\Delta_G
\]
has kernel   $\Delta_A$ and induces an  isomorphism $\langle \Delta_G, A^n \rangle/\Delta_G  \simeq A^n/\Delta_A \simeq A^{n-1}$. 
\end{proof}
Now we are ready to show that for each of the groups  $\He(3)$ and $\mathbb Z_3^2$,  there is exactly one isomorphism class of rigid \'etale quotients $E^4 /G$.

\begin{theorem}\label{Z1Z2Glatt}
There is exactly one 
isomorphism class of quotient manifolds 
$Z_1:=E^4/\mathbb Z_3^2$ resp. $Z_2:=E^4/\He(3)$ obtained by a rigid free and diagonal action.  
These projective  manifolds are infinitesimally rigid of Kodaira dimension zero  and there is an  
unramified Galois  cover  $f \colon Z_2 \to Z_1$ with  group $\mathbb Z_3^3$. 
\end{theorem}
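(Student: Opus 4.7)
I would split the statement into (a) existence and uniqueness of the quotient isomorphism class, (b) the three asserted properties of $Z_i$, and (c) the construction of the cover $f$.

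\textbf{Existence and uniqueness.} By Proposition \ref{norigid}, any free rigid diagonal action of $G \in \{\mathbb Z_3^2, \He(3)\}$ on $E^4$ has the four index three normal subgroups $A_1,\ldots,A_4$ of $G$ as its translation subgroups $T_1,\ldots,T_4$, in some order; Proposition \ref{wallgrps} and Corollary \ref{alliso} force $E = \mathbb C/\mathbb Z[\zeta_3]$ and signature $[3,3,3]$ on every factor. The data of such an action is then a quadruple of generating triples $V_i$ of $G$ with signature $[3,3,3]$ whose translation subgroup is the prescribed $A_i$, together with compatible translation parts such that no non-identity element of $G$ simultaneously fails to be a translation on all four factors. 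For existence I would exhibit one such configuration explicitly (for $\mathbb Z_3^2$ this is essentially the example of \cite[Theorem~3.4]{rigidity}; a Heisenberg analogue is easy to write down). For uniqueness I would enumerate all admissible configurations modulo the natural symmetry group — automorphisms of $G$, the diagonal $\Aut_0(E)$-action, translations of individual factors, and $S_4$ permuting factors — and verify that the resulting set has a single orbit. This is the finite combinatorial task executed by the MAGMA routine cited in the introduction, whose output is one orbit per group.

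\textbf{Projectivity, $\kappa=0$, and infinitesimal rigidity.} Each $Z_i$ is a finite quotient of the projective variety $E^4$ and is therefore projective. The quotient map $\pi\colon E^4\to Z_i$ is \'etale, so $\pi^{\ast}K_{Z_i}\simeq K_{E^4}\simeq \mathcal O_{E^4}$; hence $K_{Z_i}$ is a torsion element of $\Pic(Z_i)$ and $\kappa(Z_i)=0$. By the second remark following Definition \ref{infG}, \'etale-ness also yields $H^1(Z_i,\Theta_{Z_i})\simeq H^1(E^4,\Theta_{E^4})^G$, which vanishes by the rigidity hypothesis and Proposition \ref{rigiddiag}.

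\textbf{The cover $f$.} Set $G := \He(3)$ and $A := Z(G) = C_3$. By Remark \ref{structureHeis}(1) the center $C_3$ is contained in every $A_i$ and hence in every translation subgroup $T_i$, so $A$ acts by translations on each factor of $E^4$ and $E' := E/A$ is again an elliptic curve. The $G$-action descends to a diagonal action of $G/A\simeq \mathbb Z_3^2$ on $(E')^4$ with translation subgroups $T_i/A$, which are the four order three subgroups of $\mathbb Z_3^2$; a short check (liftings from $(E')^4$ to $E^4$, using that $A$ acts freely by translations) shows this induced action is again free, hence rigid, so by Proposition \ref{wallgrps} and Corollary \ref{alliso} we have $E'\simeq E$ and by the uniqueness part above $(E')^4/(G/A)\simeq Z_1$. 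Proposition \ref{coverf}, applied with $n=4$ and the central subgroup $A$, then produces the desired map $f\colon Z_2\to Z_1$ as a Galois cover of degree $|A|^{n-1}=27$ with Galois group $A^{n-1}\simeq \mathbb Z_3^3$; freeness of both actions makes it unramified.

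\textbf{Main obstacle.} The delicate step is uniqueness. After the structural reductions the problem is finite, but the orbit count under the full symmetry group is the part for which the authors rely on computer algebra; convincing oneself by hand that all admissible quadruples of generating triples yield biholomorphic quotients is the genuinely non-routine piece of the argument.
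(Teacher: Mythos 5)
Your proposal follows the paper's proof essentially verbatim: existence and uniqueness via Riemann's existence theorem and a MAGMA orbit count on quadruples of generating triples under $\mathfrak S_4 \times \mathcal B_3^4 \times \Aut(G)$, and the cover via the center $C_3$ of $\He(3)$ together with Proposition \ref{coverf}. The one variation — identifying $(E/C_3)^4/\mathbb Z_3^2$ with $Z_1$ by invoking the already-established uniqueness instead of the paper's second MAGMA check — is sound, but note that the induced action is rigid not because it is free (freeness does not imply rigidity) but because its linear parts agree with those of the original action, so Proposition \ref{quadraticdiff} applies unchanged.
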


\begin{proof}
First we 
show the existence and unicity of $Z_1$ resp. $Z_2$. 
Let $G$ be $\mathbb Z_3^2$ resp. $\He(3)$. By
Riemann's existence theorem the diagonal actions of $G$ on $E^4$ correspond to quadruples of generating triples
$[V_1,V_2,V_3,V_4]$.
The action of $G$ is free on $E^4$, if and only if  
$\Sigma_{V_1} \cap \ldots \cap \Sigma_{V_4} = \lbrace 1_G \rbrace$. 

As explained e.g. in \cite{BeauReal} the group $\mathfrak S_4 \times \mathcal B_3^4 \times \Aut(G)$ acts on the set of quadruples of generating triples $[V_1,V_2,V_3,V_4]$.
Here  $\mathfrak S_4$ permutes the generating triples $V_i$ of the quadruple, $\Aut(G)$ acts diagonally on 
$[V_1,V_2,V_3,V_4]$, and the {\em Artin Braid Group} $\mathcal B_3$ acts separately on each $V_i$ by so-called {\em Hurwitz moves}. By  \cite[Proposition 3.3]{BeauReal} equivalent quadruples of generating vectors yield isomorphic quotients.
With a MAGMA algorithm  we check that for each of the two groups there is exactly one orbit corresponding to a free rigid action, corresponding 
therefore to unique isomorphism class of a   rigid manifold  $Z_1:=E^4/\mathbb Z_3^2$ resp. $Z_2:=E^4/ \He(3)$.

Let us  now consider the $\He(3)$ action on $E^4$ yielding $Z_2$.
The center $C_3=Z\big(\He(3)\big) \simeq \mathbb Z_3$ 
acts on each copy of $E$ by translations, such that 
$E/C_3\simeq E$.  
Using the identification $\He(3)/C_3 \simeq \mathbb Z_3^2$, it can be checked again by a MAGMA routine that the image of the quadruple $[V_1,V_2,V_3,V_4]$ representing $Z_2$ is a quadruple which lies in the orbit representing $Z_1$. This means that on each factor  we have a commutative triangle 
$$
\xymatrix{
E_i  \ar[r]  \ar[d]_{\He(3)} & E_i/C_3 \ar[dl]^{\mathbb Z_3^2} \\
\mathbb P^1
}
$$
By
Proposition \ref{coverf} we have  a finite Galois cover 
$f \colon Z_2 \to Z_1$
 with group $\mathbb Z_3^3$. The cover $f$ is unramified, because the other three maps 
 of the diagram in Proposition \ref{coverf} are unramified. 
\end{proof}

\begin{rem}\label{motivConstr}
Note that the fourfold  $Z_2$ can be realized as a double quotient, namely  the quotient of the torus $E^4/C_3$
 by the induced  action of 
$\mathbb Z_3^2 \simeq \He(3)/C_3$.  
By construction, the linear part of the $\mathbb Z_3^2$-action on $E^4/C_3$ giving 
$Z_2$
is the same as the 
linear part of the action on $E^4$ giving  $Z_1$. 
It can be determined from the generating triples, see Remark \ref{RemRET} (3). In our situation we have (up to an automorphism of $\mathbb Z_3^2$): 
\[
\rho(a,b):=
\begin{pmatrix} \zeta_3^b  &  0 & 0 & 0  \\ 0 & \zeta_3^{a+b} & 0 & 0 \\ 0 & 0 & \zeta_3^a & 0  \\ 0 & 0 & 0 & \zeta_3^{2a+b} \end{pmatrix}, \qquad \makebox{for all} \quad  (a,b) \in \mathbb Z_3^2. 
\]
In the literature $\rho$ is usually called the \emph{analytic representation} of the group action. 
\end{rem}
With the above  in mind, it is not hard to write down explicit models of $Z_1$ and $Z_2$:  
\begin{example}\label{explicitex}
We consider the following  lattices 
\[
\Lambda_1:=\mathbb Z[\zeta_3]^4 \qquad \makebox{and} \qquad \Lambda_2 = \mathbb Z[\zeta_3]^4 + \mathbb Z \frac{1+2\zeta_3}{3}  (1,1,1,1)
\]
and define the complex tori $T_i:=\mathbb C^4/\Lambda_i$. By definition 
 $T_1 \simeq E^4$ and $T_2 \simeq E^4/\langle t \rangle$, where $E$ is the equianharmonic elliptic curve and 
\[
t \colon E^4 \to E^4, \qquad  z \mapsto z+  \frac{1+2\zeta_3}{3}  (1,1,1,1). 
\]
The  two actions $\psi_i$ of $\mathbb Z_3^2$ on $T_i$,  that give  the quotients $Z_i$ are  the following: 
\begin{align*}
\psi_1(1,0)(z) & := \diag(1, \zeta_3, \zeta_3, \zeta_3^2)z  + \frac{1+2\zeta_3}{3} (1, 2,0,1), \\
\psi_1(0,1)(z) & :=\diag (\zeta_3, \zeta_3, 1, \zeta_3) z +  \frac{1+2\zeta_3}{3} (0,0,2,0), \\
\psi_2(1,0)(z)  & :=\diag (1, \zeta_3, \zeta_3, \zeta_3^2)z  + \frac{1}{3} (1, 0 , 0, 2), \\
\psi_2(0,1)(z)  & :=\diag(\zeta_3, \zeta_3, 1, \zeta_3) z + \frac{1}{3}(0, 2 \zeta_3, 1+ \zeta_3, 2).
\end{align*}
\end{example}

The cohomology of the manifolds $Z_i$ is easy to compute:

\begin{proposition}\label{Hodgesmooth}
The projective manifolds $Z_1$ and $Z_2$ have the same Hodge numbers:
\[
h^{1,0}=h^{2,0}=h^{3,1}=h^{4,0}=0, \quad h^{1,1}=4, \quad  h^{2,1}=3,\quad h^{3,0}=1, \quad  h^{2,2}=6. 
\]
Moreover, $\hol(K_{Z_i}) \neq \hol_{Z_i}$ and   $\hol(K_{Z_i}) ^{\otimes 3} \simeq \hol_{Z_i}$.  
\end{proposition}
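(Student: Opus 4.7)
The plan is to reduce the computation to character theory for $\bar G := \mathbb{Z}_3^2$ acting on a complex torus. By the proof of Theorem~\ref{Z1Z2Glatt}, $Z_2$ is also a \emph{free} quotient $T_2/\bar G$, where $T_2 := E^4/C_3$ and $\He(3)/C_3 \simeq \bar G$; and by Remark~\ref{motivConstr}, the analytic representation $\rho \colon \bar G \to \GL(V)$ on the universal cover $V = \mathbb{C}^4$ of $T_i$ is the \emph{same} for $i = 1, 2$. Since the action is free, $H^{p,q}(Z_i) = H^{p,q}(T_i)^{\bar G}$; and since translations act trivially on the Dolbeault cohomology of a complex torus, the induced $\bar G$-module structure on $H^{p,q}(T_i) = \wedge^p V^* \otimes \wedge^q \overline{V^*}$ depends only on $\rho$. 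This immediately gives $h^{p,q}(Z_1) = h^{p,q}(Z_2)$ for all $p, q$.

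Diagonalising $\rho(a,b) = \diag(\zeta_3^b, \zeta_3^{a+b}, \zeta_3^a, \zeta_3^{2a+b})$, the four eigenlines of $V$ carry the characters $\alpha_1 = (0,1)$, $\alpha_2 = (1,1)$, $\alpha_3 = (1,0)$, $\alpha_4 = (2,1)$ of $\bar G$. The basis of $\wedge^p V^* \otimes \wedge^q \overline{V^*}$ indexed by size-$p$ and size-$q$ subsets $S, T \subset \{1,2,3,4\}$ is then an eigenbasis of weight $\sum_{j \in T}\alpha_j - \sum_{i \in S}\alpha_i$, and hence
\[
h^{p,q}(Z_i) = \#\bigl\{(S,T) : |S| = p,\; |T| = q,\; \textstyle\sum_{j \in T}\alpha_j \equiv \sum_{i \in S}\alpha_i \pmod{3}\bigr\}.
\]

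The remaining step is a small enumeration. One verifies directly: the $\alpha_i$ are pairwise distinct and nonzero; the six pair-sums $\alpha_i + \alpha_j$ are distinct and nonzero; the unique triple with $\sum \alpha_i = 0$ is $\{1,2,4\}$; the total $\alpha_1+\alpha_2+\alpha_3+\alpha_4 = (1,0) \neq 0$; and the only solutions of $\alpha_i + \alpha_j = \alpha_k$ with $i<j$ are $1+3=2$, $2+3=4$, $3+4=1$. Substituting into the displayed formula yields precisely $h^{1,0} = h^{2,0} = h^{3,1} = h^{4,0} = 0$, $h^{3,0} = 1$, $h^{1,1} = 4$, $h^{2,1} = 3$, and $h^{2,2} = 6$, as claimed.

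Finally, $\hol(K_{Z_i})$ pulls back to the trivial bundle on $T_i$ equipped with the $\bar G$-linearisation of weight $-(\alpha_1 + \alpha_2 + \alpha_3 + \alpha_4) = (2, 0)$. This character has order exactly $3$, so $\hol(K_{Z_i})$ is a nontrivial $3$-torsion line bundle, giving both $\hol(K_{Z_i}) \not\simeq \hol_{Z_i}$ and $\hol(K_{Z_i})^{\otimes 3} \simeq \hol_{Z_i}$. No step here is deep; the only (minor) obstacle is organising the character data without sign or dualisation mistakes.
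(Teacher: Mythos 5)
Your proposal is correct and follows essentially the same route as the paper: both reduce to the fact that $Z_1$ and $Z_2$ are free torus quotients sharing the analytic representation $\rho$ of Remark~\ref{motivConstr}, compute $H^{p,q}(Z_i)$ as the $\rho$-invariant part of $\Lambda^p\Omega\otimes\Lambda^q\overline{\Omega}$ by counting invariant eigenbasis vectors, and derive the statement on $\hol(K_{Z_i})$ from the non-invariance of $dz_1\wedge\cdots\wedge dz_4$ together with the invariance of its third tensor power. Your character bookkeeping (the weights $\alpha_i$, their pair and triple sums, and the relations $\alpha_1+\alpha_3=\alpha_2$, $\alpha_2+\alpha_3=\alpha_4$, $\alpha_3+\alpha_4=\alpha_1$) reproduces exactly the invariant forms listed in the paper's proof.
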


\begin{proof}
For any complex torus $T=\mathbb C^n/\Lambda$ the Dolbeault groups have the description 
\[
 H^{p,q}(T)= \Lambda^p \Omega \otimes \Lambda^q \overline{\Omega}, \quad \makebox{where} \quad \Omega:=\langle dz_1, \ldots, dz_n \rangle.  
\]
Since the manifolds $Z_i$ are quotients of tori by free actions,  the groups $H^{p,q}(Z_i)$
are isomorphic to the invariant 
parts of $\Lambda^p \Omega \otimes \Lambda^q \overline{\Omega}$ under  the $\mathbb Z_3^2$ action induced by  $\psi_i$. 
Since the derivative of a constant is zero, it suffices to act with the linear part of $\psi_i$ i.e. with the analytic representation. 
Since both actions $\psi_i$ have the same analytic representation $\rho$,  both quotients   $Z_1$ and $Z_2$ have isomorphic Dolbeault groups and in particular, the 
same Hodge numbers.  To compute these groups explicitly we take the standard  basis 
\[
\mathcal B:=\lbrace dz_{i_1} \wedge \ldots \wedge dz_{i_p} \otimes d\overline{z}_{j_1} \wedge \ldots \wedge d\overline{z}_{j_q} ~ \big\vert ~   i_1 < \ldots < i_p \leq 4, ~  j_1 < \ldots < j_q \leq 4  \rbrace 
\]
of  $\Lambda^p \Omega \otimes \Lambda^q \overline{\Omega}$. 
The fact that $\rho$ acts by diagonal matrices implies that 
a basis of  $H^{p,q}(Z_i)$ is given by the invariant basis vectors of  $\mathcal B$. 
The non-zero Dolbeault groups are: 
\begin{align*}
H^{3,0}(Z_i)& \simeq  \langle dz_1\wedge dz_2 \wedge dz_4 \rangle, \\
H^{1,1}(Z_i)& \simeq  \langle dz_i\otimes d\overline{z}_i ~ \big\vert ~ i \leq 4 \rangle, \\
H^{2,1}(Z_i)& \simeq  \langle dz_1 \wedge dz_3 \otimes d\overline{z}_2, dz_2 \wedge dz_3 \otimes d\overline{z}_4, dz_3 \wedge dz_4 \otimes d\overline{z}_1 \rangle, \\
H^{2,2}(Z_i) & \simeq \langle dz_i \wedge dz_j \otimes d\overline{z}_i \wedge d\overline{z}_j ~ \big\vert ~ i < j \leq 4 \rangle.
\end{align*}
To prove the statement about  $\hol(K_{Z_i})$, we note  that the differential form
$$\big( dz_1 \wedge \ldots \wedge dz_4)^{\otimes 3}$$ is $\mathbb Z_3^2$-invariant. Thus  it 
descends to $Z_i$ and provides a trivialization of   $\hol(K_{Z_i})^{\otimes 3}$. 

\end{proof}

The remaining part of the subsection is devoted to prove that  the manifolds $Z_1$ and $Z_2$ are not homeomorphic. More precisely, we shall show that they have
  non isomorphic fundamental groups. 

\begin{rem}
The fundamental group  of $Z_i$  is isomorphic to  
the group of deck  transformations $\Gamma_i$ of the universal cover 
$\mathbb C^4 \to T_i \to Z_i$. 
It  consists of the lifts of the automorphisms $\psi_i(a,b)$ for all $(a,b) \in \mathbb Z_3^2$ and is therefore a group of affine transformations. 
Since the linear parts $\rho(a,b)$ of the maps  $\psi_i(a,b)$, viewed as  real $8\times 8$ matrices, are orthogonal  we can more precisely say that 
$\Gamma_i$ is a cocompact free
discrete subgroup of of the Euclidean group of isometries 
$\mathbb{E}(8):= \mathbb R^8 \rtimes \OO(8)$.

Because  the action of $\mathbb Z_3^2$ on $T_i$ does not contain translations, the lattice $\Lambda_i$ of the torus $T_i$ is equal to the intersection  $\Gamma_i \cap \mathbb R^8$ i.e. the translation subgroup of $\Gamma_i$. 
\end{rem}

\begin{definition}
1) A discrete cocompact subgroup of $\mathbb{E}(n)$ is called a {\em crystallographic group}.  

2) A {\em Bieberbach group} is a torsion free crystallographic group. 
\end{definition}

As a modern  reference for Bieberbach groups we use \cite{LCh}, for the original results see \cite{bib1}, \cite{bib2}.

\begin{rem}
1) It is worth observing that the underlying $\mathcal C^{\infty}$-manifold of $Z_i$ admits a \emph{flat Riemannian metric}  i.e., a metric such that  the curvature tensor 
\[
R(X,Y)Z:=\nabla_X \nabla_YZ - \nabla_Y \nabla_XZ - \nabla_{[X,Y]}Z
\]
with respect to the \emph{Levi-Civita connection} is identically zero. Vice versa each compact flat Riemannian $n$-manifold is isometric to a  quotient $\RR^n /\Gamma$, where $\Gamma$ is a Bieberbach group (cf. \cite[Chapter II]{LCh}). Moreover, the quotient $\Gamma / \Lambda$ by the translation subgroup  is isomorphic to the holonomy group of $\RR^n /\Gamma$.

2) Obviously not every quotient of $\RR^{2n}$ by a Bieberbach group has a complex structure. If there is a complex structure, then the complex manifold is an \'etale torus quotient and is called a {\em generalized hyperelliptic manifold}. These have been studied and classified in dimension 2 by Bagnera and de Franchis and in dimension 3 by Uchida-Yoshihara  \cite{UchidaYoshi}, Lange \cite{Lange} and Catanese-Demleitner in \cite{AndiFab}. 
In his PhD thesis \cite{Demleitner} Demleitner gave a complete list of holonomy groups of generalized hyperelliptic $4$-folds.  The manifolds 
$Z_1$ and $Z_2$ are two distinct {\em rigid} examples,  with holonomy group $\mathbb Z_3^2$. 
The second author and Demleitner work on a complete classification of rigid generalized hyperelliptic $4$-folds. 
\end{rem}

In order to distinguish the fundamental groups  $\Gamma_1$ and $\Gamma_2$ of $Z_1$ and $Z_2$ we will use the first and second of the following three  theorems of Bieberbach 
(cf. \cite[Chapter I]{LCh}):

\begin{theorem}[Bieberbach's three theorems]\label{biberer} \

(1) The translation subgroup $\Lambda:=\Gamma \cap \mathbb R^n$ of a crystallographic group  $\Gamma \leq \mathbb{E}(n)$ is a 
lattice  of rank $n$ and $\Gamma/\Lambda$ is finite.  All  other normal abelian subgroups of $\Gamma$ are contained in $\Lambda$.

(2) Let  $\Gamma_1, \Gamma_2 \leq \mathbb{E}(n)$ be two crystallographic groups and $f \colon \Gamma_1 \to \Gamma_2$ be an isomorphism. Then there  exists 
an affine transformation  
 $\alpha \in \Aff(n)$, such that $f(g)=\alpha \circ g \circ \alpha^{-1}$ for all $g\in \Gamma_1$.  

(3) In each dimension  there are only finitely many isomorphism classes of crystallographic groups. 
\end{theorem}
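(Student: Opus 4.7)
This is Bieberbach's classical three-theorems package, so the plan is to follow his (and Frobenius--Zassenhaus's) approach, treating the parts in order since each builds on the previous.

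For (1), write every $g\in\Gamma$ uniquely as $g(x)=r(g)x+\tau(g)$ with rotational part $r(g)\in\OO(n)$; the homomorphism $r\colon\Gamma\to\OO(n)$ has kernel $\Lambda$, and the only non-trivial assertion is that its image is finite. The heart of the proof is a commutator estimate: there is an explicit $\epsilon_0=\epsilon_0(n)>0$ such that for $A,B\in\OO(n)$ with $\|A-I\|,\|B-I\|<\epsilon_0$, the commutator $[A,B]$ is quadratically closer to $I$. Iterating this on pairs $\alpha,\beta\in\Gamma$ whose rotational parts lie in the $\epsilon_0$-neighbourhood $U$ of $I$ produces a sequence of elements in $\Gamma$ with non-trivial but arbitrarily small rotational parts and controlled translational parts; cocompactness of $\Gamma\backslash\RR^n$ gives a positive lower bound on the translational parts of non-translational elements, and the contradiction forces $r(U\cap\Gamma)$ to consist of pairwise commuting elements. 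Compactness of $\OO(n)$ then implies $r(\Gamma)$ is finite, so $\Lambda=\ker r$ has finite index, is discrete, and inherits cocompactness, hence is a lattice of full rank. For the final sentence, let $N\trianglelefteq\Gamma$ be abelian and $n=(A,a)\in N$; a direct calculation shows $[t,n]=(I,(I-A)v)$ for any $t=(I,v)\in\Lambda$, and demanding that this commute with $n$ (both lie in the abelian $N$) forces $(I-A)^{2}v=0$, hence, by diagonalisability of $A\in\OO(n)$ over $\CC$, $(I-A)v=0$. Since $\Lambda$ spans $\RR^n$, this gives $A=I$, so $n\in\Lambda$.

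For (2), the characterisation from (1) of $\Lambda_i$ as the unique maximal normal abelian subgroup of finite index is \emph{intrinsic}, so $f$ restricts to an isomorphism $\Lambda_1\to\Lambda_2$. As both are lattices of rank $n$, this uniquely extends to an $\RR$-linear isomorphism $L\colon\RR^n\to\RR^n$. Conjugation of $\Lambda_1$ by $g\in\Gamma_1$ corresponds under $L$ to conjugation of $\Lambda_2$ by $f(g)$, forcing $L\,r(g)\,L^{-1}=r(f(g))$. Hence $\alpha_0(x):=Lx$ conjugates each $g\in\Gamma_1$ into an element of $\mathbb{E}(n)$ with the correct rotational part but a possibly wrong translation part, and the discrepancy defines a $1$-cocycle $c\colon\Gamma_1\to\RR^n$ valued in the representation $r\circ f$. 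This representation factors through the finite group $r(f(\Gamma_1))$, so averaging gives $H^1=0$ and $c(g)=v-r(f(g))v$ is a coboundary; the map $\alpha(x):=Lx+v$ is then an affine transformation realising $f$ by conjugation.

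For (3), by (2) one must count $\Aff(n)$-conjugacy classes of cocompact discrete subgroups of $\mathbb{E}(n)$, and these are parametrised by a finite point group $P\subset\OO(n)$ up to conjugation, a $P$-invariant lattice $\Lambda\subset\RR^n$ (so that $P$ embeds in $\GL(n,\ZZ)$ via its action on $\Lambda$) up to $\GL(n,\ZZ)$-equivalence, and an extension class in $H^2(P,\Lambda)$. Finiteness of conjugacy classes of finite subgroups of $\GL(n,\ZZ)$ is Minkowski--Jordan; finiteness of $\ZZ P$-lattices of rank $n$ up to isomorphism, for each such $P$, is the Jordan--Zassenhaus theorem on integral representations; and $H^2(P,\Lambda)$ is a finite abelian group since $P$ is finite. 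A product of finite sets is finite, giving (3). The genuine obstacle in the whole package is the commutator estimate in (1): everything after it is either formal cohomology or classical integral representation theory, but the finiteness of $r(\Gamma)$ is Bieberbach's deep original insight and is the step around which the entire argument is organised.
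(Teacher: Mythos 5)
The paper does not actually prove Theorem \ref{biberer}: it is quoted as a classical package with references to Charlap \cite{LCh} and to Bieberbach's original papers \cite{bib1}, \cite{bib2}, so there is no in-paper argument to compare against. Your sketch follows precisely the standard route of the cited reference (commutator contraction in $\OO(n)$ plus discreteness for the first theorem; vanishing of $H^1$ of a finite group acting on the divisible module $\RR^n$ for the second; Jordan--Minkowski, Jordan--Zassenhaus and finiteness of $H^2(P,\Lambda)$ for the third), and the architecture, including the reduction of the ``maximal normal abelian'' clause to the computation $[t,n]=(I,(I-A)v)$ and semisimplicity of $A$, is correct. Two points of precision in part (1), which is indeed the only deep step: the assertion that cocompactness yields a positive lower bound on the translational parts of \emph{all} non-translational elements is false as literally stated --- a finite-order rotation about a lattice point is non-translational with zero translational part --- and what the iteration actually uses is discreteness of $\Gamma$, i.e.\ that no infinite set of distinct elements of $\Gamma$ has uniformly bounded rotational and translational parts; moreover ``pairwise commuting'' is not the endpoint of the commutator argument, since a virtually abelian subgroup of $\OO(n)$ need not be finite --- one must push the contraction further to show that any $\gamma\in\Gamma$ whose rotational part is sufficiently close to $I$ is an honest translation, after which $r(\Gamma)$ is a discrete, hence finite, subgroup of the compact group $\OO(n)$. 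With those two steps restored, parts (2) and (3) go through exactly as you describe.
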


\begin{rem}\label{diffeovarphi} 
1) Assume that $Z_1$ and $Z_2$ are homeomorphic. Then, by Bieberbach's  second theorem,  there exists an
affine transformation $\alpha(x)=Ax+b$ such that $\alpha \circ \Gamma_2 \circ \alpha^{-1}=\Gamma_1$.  Bieberbach's  first  theorem implies that
$\alpha \circ \Lambda_2 \circ \alpha^{-1}=\Lambda_1$. In other words, $\alpha$ induces  diffeomorphisms
\[
\widehat{\alpha} \colon Z_2 \to Z_1 \qquad \makebox{and} \qquad  \widetilde{\alpha} \colon \mathbb R^8/\Lambda_2 \to \mathbb R^8/\Lambda_1
\]
which  make the following diagram commutative:  
 \[
\xymatrix{
\mathbb R^8  \ar[r]^{\alpha} \ar[d] & \ar[d] \mathbb R^8 \\
\mathbb R^8/\Lambda_2  \ar[d]\ar[r]^{\widetilde{\alpha}} & \mathbb R^8/\Lambda_1  \ar[d] \\
Z_2\ar[r]^{\widehat{\alpha}} & Z_1.} 
\]
In particular, Bieberbach's  theorems imply  that $Z_1$ and $Z_2$ have isomorphic fundamental groups if and only if they are diffeomorphic, even by an affine diffeomorphism.

2) Recall that the actions $\psi_1$ and $\psi_2$ have the same analytic representation $\rho$, which we now view  
as a real representation:
\[
 \rho_{\mathbb R}(a,b) := 
\begin{pmatrix} B^b  &  0 & 0 & 0  \\ 0 & B^{a+b} & 0 & 0 \\ 0 & 0 & B^a & 0  \\ 0 & 0 & 0 & B^{2a+b}
\end{pmatrix},
\quad B:=-\frac{1}{2}
\begin{pmatrix} 1 &  \sqrt{3}  \\ -\sqrt{3} &1 \end{pmatrix}. 
\]
By the commutativity of the diagram in (1), there exists an automorphism 
$\varphi \in \Aut(\mathbb Z_3^2)$,  such that 
$$
A \rho_{\mathbb R}(a,b) A^{-1} =\rho_{\mathbb R}\big(\varphi(a,b)\big),  \ \forall \  (a,b) \in \mathbb Z_3^2. 
$$
In other words, $\rho_{\mathbb R}(a,b) $ and $\rho_{\mathbb R}\big(\varphi(a,b)\big)$ 
are isomorphic as representations over $\mathbb R$. 

\end{rem}

\begin{proposition}\label{4twodims}
The representation $\rho_{\mathbb R}$ is the sum of four disctinct irreducible two-dimensional representations over $\mathbb R$. 
\end{proposition}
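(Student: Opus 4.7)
The plan is to decompose $\rho_{\mathbb R}$ blockwise and analyze each $2\times 2$ block separately. First I would note that $B$ is the orthogonal matrix representing rotation by $2\pi/3$: indeed, its eigenvalues are $\zeta_3$ and $\overline{\zeta_3}$, so $B$ has order three and $B^k \neq I$ for $k\in\{1,2\}$. Consequently, each of the four diagonal blocks of $\rho_{\mathbb R}(a,b)$ defines a two-dimensional real representation $\sigma_i$ of $\mathbb Z_3^2$ which factors through a homomorphism $\chi_i \colon \mathbb Z_3^2 \to \mathbb Z_3$ composed with the inclusion $\langle B \rangle \hookrightarrow \OO(2)$, where
\[
\chi_1(a,b)=b, \quad \chi_2(a,b)=a+b, \quad \chi_3(a,b)=a, \quad \chi_4(a,b)=2a+b.
\]

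Next I would check irreducibility of each $\sigma_i$ over $\mathbb R$. Since each $\chi_i$ is surjective, the image of $\sigma_i$ is the cyclic subgroup $\langle B \rangle \leq \OO(2)$. The characteristic polynomial of $B$ is $\lambda^2 + \lambda + 1$, which has no real roots, so $B$ has no real eigenvectors and stabilizes no real line in $\mathbb R^2$. Hence $\sigma_i$ is irreducible over $\mathbb R$.

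Finally I would show pairwise non-isomorphism. Two irreducible real representations are isomorphic only if they have the same kernel, so it suffices to compute the kernels:
\[
\ker(\chi_1) = \langle (1,0) \rangle, \quad  \ker(\chi_2) = \langle (1,2) \rangle, \quad  \ker(\chi_3) = \langle (0,1) \rangle, \quad  \ker(\chi_4) = \langle (1,1) \rangle.
\]
These are the four distinct index-three subgroups of $\mathbb Z_3^2$, so the representations $\sigma_1, \sigma_2, \sigma_3, \sigma_4$ are pairwise non-isomorphic, and $\rho_{\mathbb R} \simeq \sigma_1 \oplus \sigma_2 \oplus \sigma_3 \oplus \sigma_4$ is as claimed. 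There is no real obstacle to this argument; it is a direct verification using only the fact that the four linear characters $\chi_i \colon \mathbb Z_3^2 \to \mathbb Z_3$ appearing on the diagonal of the analytic representation are mutually distinct surjections.
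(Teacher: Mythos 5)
Your proof is correct and follows essentially the same route as the paper, which simply notes that the four blocks $B^b, B^{a+b}, B^a, B^{2a+b}$ are irreducible because $B$ is not diagonalizable over $\mathbb R$ and that they are obviously distinct. Your version merely fills in the details (no real eigenvector of $B$, hence no invariant real line; distinctness via the four distinct kernels $\langle(1,0)\rangle, \langle(1,2)\rangle, \langle(0,1)\rangle, \langle(1,1)\rangle$), all of which checks out.
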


\begin{proof}
The representations $B^b,  B^{a+b},  B^a$ and  $B^{2a+b}$  are indeed irreducible, because $B$ is not diagonalizable over $\mathbb R$. 
Obviously they are distinct.
\end{proof}

\begin{rem} 
1) The group $\mathbb Z_3^2$ has precisely $5$ irreducible real representations: the trivial representation and the four two dimensional representations from above. 
This can be verified with the help of the formula 
\[
|G| = \sum_{\chi \in \Irr_{\mathbb R}(G)} \frac{\chi(1)^2}{\langle \chi, \chi \rangle} ,
\]
which  holds for any finite group  $G$. 

2) By Schur's Lemma the  endomorphism algebra $\End_G(V) $  of an  irreducible real representation $V$ of a finite group $G$  is a finite dimensional  division algebra.  
As it is clearly associative, it  is isomorphic to 
$\mathbb R$, $\mathbb C$ or the quaternions $\mathbb H$, according  to Frobenius' theorem \cite{frob}.   

\end{rem}

\begin{proposition}\label{comMatrices} \

1) The $\mathbb R$-algebra of matrices $H$ which commute with $B$ is:  
\[
\bigg\lbrace 
\begin{pmatrix} 
\lambda  &  -\mu  \\ \mu & \lambda
\end{pmatrix} ~ \bigg\vert ~ \lambda,\mu \in \mathbb R \bigg\rbrace
 \simeq \mathbb C. 
\]

2) The $\mathbb R$-vectorspace of matrices $H$ with  $HB=B^2H$ is 
\[
\bigg\lbrace 
\begin{pmatrix} 
\lambda  &  \mu  \\ \mu & -\lambda
\end{pmatrix} ~ \bigg\vert ~ \lambda,\mu \in \mathbb R \bigg\rbrace
  \simeq \mathbb R^2. 
\]

The matrices in $1)$ define $\mathbb C$-linear maps and the matrices in $2)$ $\mathbb C$-antilinear maps. 
In complex coordinates $z=x+ i y$ we may  identify them with  
\[
h_{\lambda + i  \mu} \colon \mathbb C \to \mathbb C, \quad z\mapsto (\lambda+i\mu)z \qquad \makebox{and} \qquad 
\overline{h}_{\lambda + i \mu} \colon \mathbb C \to \mathbb C, \quad z\mapsto (\lambda+i\mu)\overline{z}. 
\]
\end{proposition}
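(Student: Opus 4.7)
The plan is to exploit the fact that $B$ is simply the matrix representing multiplication by $\zeta_3 = -\tfrac12 + \tfrac{\sqrt{3}}{2}i$ on $\mathbb{C}$ viewed as $\mathbb{R}^2$ via the standard basis $\{1,i\}$. A direct check (or the minimal polynomial $X^2+X+1$ of $\zeta_3$) gives $B^2+B+I=0$, so $B^2$ represents multiplication by $\zeta_3^2=\overline{\zeta_3}$. Both parts then reduce to concrete $2\times 2$ linear systems, and their conceptual content is the dichotomy between $\mathbb{C}$-linear and $\mathbb{C}$-antilinear $\mathbb{R}$-endomorphisms of $\mathbb{C}$.

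For part (1), I would write $H=\begin{pmatrix} a & b \\ c & d\end{pmatrix}$, compute $HB$ and $BH$ entrywise, and compare. The resulting system forces $a=d$ and $c=-b$, giving exactly the claimed form $H=\begin{pmatrix} \lambda & -\mu \\ \mu & \lambda\end{pmatrix}$. Sending such an $H$ to $\lambda+i\mu$ is the classical ring isomorphism with $\mathbb{C}$, and conceptually reflects the fact that commuting with multiplication by a single non-real complex number already forces an $\mathbb{R}$-linear endomorphism of $\mathbb{C}$ to be $\mathbb{C}$-linear (of the form $z\mapsto (\lambda+i\mu)z$).

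For part (2), the same entrywise comparison applied to $HB=B^2H$ yields $a=-d$ and $b=c$, so $H=\begin{pmatrix} \lambda & \mu \\ \mu & -\lambda\end{pmatrix}$; this is clearly a $2$-dimensional real vector space, isomorphic to $\mathbb{R}^2$. The interpretation is that the relation $H(\zeta_3 z)=\overline{\zeta_3}H(z)$ is precisely the definition of a $\mathbb{C}$-antilinear map $\mathbb{C}\to\mathbb{C}$, and every such map has the form $z\mapsto \alpha\bar z$ for a unique $\alpha\in\mathbb{C}$.

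To conclude, I would verify the identification with $h_{\lambda+i\mu}$ and $\bar h_{\lambda+i\mu}$ by applying each matrix to the column vector $\binom{x}{y}$ corresponding to $z=x+iy$ and matching the real and imaginary parts of $(\lambda+i\mu)z$ respectively $(\lambda+i\mu)\bar z$. There is no real obstacle here: the proof is essentially bookkeeping once one recognizes that $B$ is the multiplication-by-$\zeta_3$ operator, and the only thing worth highlighting is that $B^2=\overline{B}$ is what turns the commutation relation into an antilinearity condition.
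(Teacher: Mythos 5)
Your proof is correct and follows essentially the same route as the paper, which simply makes the Ansatz of a general $2\times 2$ matrix $H$ and solves the resulting linear system to obtain the displayed matrices. Your additional observation that $B$ is the matrix of multiplication by $\zeta_3$ (so that $B^2+B+I=0$ and $HB=B^2H$ is exactly the antilinearity condition) is a nice conceptual gloss on the identification with $h_{\lambda+i\mu}$ and $\overline{h}_{\lambda+i\mu}$, but it does not change the substance of the argument.
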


\begin{proof}
An Ansatz with a general 
$2\times 2$ matrix $H$ yields a  system of linear equations. The solutions are the displayed matrices.  
\end{proof}
Now we are ready to prove the following:
\begin{theorem}\label{Z1notZ2}
The fundamental groups of the manifolds $Z_1$ and $Z_2$ are not isomorphic. 
\end{theorem}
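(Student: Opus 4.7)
My plan is to argue by contradiction and reduce an alleged isomorphism $\Gamma_1 \cong \Gamma_2$ to an arithmetic incompatibility between the lattices $\Lambda_1 \subset \Lambda_2$ and the representation-theoretic constraints on $\rho_{\mathbb R}$. Suppose $\Gamma_1 \cong \Gamma_2$; by Bieberbach's second theorem (cf.\ Remark \ref{diffeovarphi}) there is an affine map $\alpha(x)=Ax+b$ with $A\Lambda_2 = \Lambda_1$ and $A\rho_{\mathbb R}(g)A^{-1} = \rho_{\mathbb R}(\varphi(g))$ for some $\varphi \in \Aut(\mathbb Z_3^2)$.

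First, I would use Proposition \ref{4twodims} to conclude that $\rho_{\mathbb R}$ splits as a sum of four pairwise distinct two-dimensional irreducible real representations $V_1,\ldots,V_4$. Since $\varphi$ must permute these summands via some $\sigma \in S_4$, the intertwiner $A$ restricts to an $\mathbb R$-isomorphism $V_i \to V_{\sigma(i)}$ for every $i$. By Proposition \ref{comMatrices}, each such restriction is either $\mathbb C$-linear of the form $z \mapsto \mu_i z$, or $\mathbb C$-antilinear of the form $z \mapsto \mu_i \bar z$, with $\mu_i \in \mathbb C^{\ast}$. Viewing $A$ as a map $\mathbb C^4 \to \mathbb C^4$, it therefore acts in each slot by a scaling $\mu_i$ composed with either the identity or complex conjugation, followed by the permutation $\sigma$.

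The second step is to translate $A\Lambda_2 = \Lambda_1$ into arithmetic conditions on the $\mu_i$. Since $\mathbb Z[\zeta_3]^4 \subseteq \Lambda_2$ and $A\Lambda_2 = \Lambda_1 = \mathbb Z[\zeta_3]^4$, testing $A$ on the basis vectors $e_i$ and $\zeta_3 e_i$ yields $\mu_i \in \mathbb Z[\zeta_3]$. Setting $u := (1+2\zeta_3)/3$, a short calculation gives $u = i\sqrt 3/3$, so $\bar u = -u$; hence the condition $A\bigl(u(1,1,1,1)\bigr) \in \Lambda_1$ reads $\pm \mu_i u \in \mathbb Z[\zeta_3]$ in each slot, regardless of whether the block is linear or antilinear. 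Equivalently, $3$ divides $\mu_i(1+2\zeta_3)$ in $\mathbb Z[\zeta_3]$. Since $3$ is ramified as $\mathfrak p^2$ with $\mathfrak p = (1+2\zeta_3) = (1-\zeta_3)$ the unique prime of norm $3$, this forces $\mu_i \in \mathfrak p$, and consequently $|\mu_i|^2 = N(\mu_i) \geq 3$.

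The contradiction now comes from computing $|\det_{\mathbb R} A|$ in two ways. Combined with $[\Lambda_2:\Lambda_1] = 3$, the identity $A\Lambda_2 = \Lambda_1$ forces $|\det_{\mathbb R} A| = 3$. On the other hand, the block form from Step 1 (each $2\times 2$ real block, linear or antilinear, has absolute determinant $|\mu_i|^2$) gives $|\det_{\mathbb R} A| = \prod_{i=1}^{4} |\mu_i|^2 \geq 3^4 = 81$, which is incompatible. I expect the main obstacle to lie in the middle step: carefully extracting the ideal-theoretic conclusion $\mu_i \in \mathfrak p$ from the mixed linear/antilinear block picture relies on the happy observation $\bar u = -u$, which disposes of the conjugation ambiguity and lets one argue uniformly in all four slots.
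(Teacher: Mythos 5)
Your proof is correct, and it follows the paper's argument through the main structural steps: Bieberbach's theorems produce the affine conjugation $\alpha(x)=Ax+b$, Proposition \ref{4twodims} and Schur's lemma force $A$ into block-permutation form with each nonzero $2\times 2$ block $\mathbb C$-linear or $\mathbb C$-antilinear as in Proposition \ref{comMatrices}, and $Ae_i\in\Lambda_1$ gives $\mu_i\in\mathbb Z[\zeta_3]$. Where you diverge is the endgame. The paper applies $A^{-1}$ to the basis vectors $e_i$, notes that a vector of $\Lambda_2$ with a single nonzero entry must already lie in the sublattice $\mathbb Z[\zeta_3]^4$, and concludes that each $w_i$ is a \emph{unit} of $\mathbb Z[\zeta_3]$; this is then incompatible with $w_i\tfrac{1+2\zeta_3}{3}\in\mathbb Z[\zeta_3]$. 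You instead extract from $A\bigl(\tfrac{1+2\zeta_3}{3}(1,1,1,1)\bigr)\in\Lambda_1$ the divisibility $\mu_i\in\mathfrak p=(1-\zeta_3)$, hence $|\mu_i|^2\ge 3$ for all four blocks, and contradict this with the global count $|\det_{\mathbb R}A|=[\Lambda_2:\Lambda_1]=3$ versus $\prod_i|\mu_i|^2\ge 81$. Both closings are sound and rest on the same arithmetic fact ($u=(1+2\zeta_3)/3\notin\mathbb Z[\zeta_3]$ but $3u\in\mathbb Z[\zeta_3]$, with $\bar u=-u$ neutralizing the antilinear ambiguity); your determinant--index argument has the small advantage of avoiding the slightly delicate analysis of which elements of $\Lambda_2$ the columns of $A^{-1}$ can be, replacing it with a covolume computation that is immediate from $[\Lambda_2:\Lambda_1]=3$.
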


\begin{proof}
Assume  the converse. Then, as we have seen in Remark \ref{diffeovarphi}, 
 there exists an affine transformation $\alpha(x)=Ax+b$ inducing a diffeomorphism 
$\widehat{\alpha} \colon Z_2 \to Z_1$ and an  automorphism
$\varphi \in \Aut(\mathbb Z_3^2)$, such that 
\[
A \rho_{\mathbb R}(a,b) A^{-1} =\rho_{\mathbb R}\big(\varphi(a,b)\big),  \qquad \makebox{for all}  \qquad  (a,b) \in \mathbb Z_3^2. 
\]
We subdivide $A$ into  $16$ blocks $A_{i,j}$ of $2\times 2$-matrices:
\[
A=
\begin{pmatrix}
A_{1,1} & A_{1,2} & A_{1,3} & A_{1,4} \\
A_{2,1} & A_{2,2} & A_{2,3} & A_{2,4} \\
A_{3,1} & A_{3,2} & A_{3,3} & A_{3,4} \\
A_{4,1} & A_{4,2} & A_{4,3} & A_{4,4} 
\end{pmatrix}
\]
By Proposition \ref{4twodims} the representation   $ \rho_{\mathbb R}$, and henceforth also $\rho_{\mathbb R}\circ \varphi$,  is made up of four distinct irreducible 
two-dimensional  representations of $\mathbb Z_3^2$. 
 Schur's lemma tells us  that  there exists a 
permutation $\tau \in \mathfrak S_4$ such that  for each $i$ the block 
$A_{\tau(i),i}$ is invertible, while the other $12$  blocks  are identically  zero. The non-zero blocks are those described in  Proposition \ref{comMatrices}:  each block 
$A_{\tau(i),i}$ either commutes with $B$, or $A_{\tau(i),i}  B=B^2A_{\tau(i),i}$.  
Whence up to a permutation of blocks,  $A$ is a sum of $\mathbb C$-linear and  $\mathbb C$-antilinear maps:  
\[
h_{w_i}(z)=w_i z \qquad \makebox{or} \qquad 
\overline{h}_{w_i}(z)=w_i \overline{z} \qquad \makebox{where} \quad w_i  \in \mathbb C^{\ast}. 
\]
Since $\alpha$ defines at the same time a  diffeomorphism between the tori  $T_2 $ and $T_1$, it holds
\[
A \cdot \Lambda_2 = \Lambda_1 =\mathbb Z[\zeta_3]^4,
\] 
where we view $A$ as a map $A\colon \mathbb C^4 \to \mathbb C^4$. 
In particular $Ae_i \in\mathbb Z[\zeta_3]^4$, for all $1\leq i \leq 4$. This shows that all $w_i$ belong to  $ \mathbb Z[\zeta_3]$. 
Similarly, since  $A^{-1}e_i$ is a vector with just one non-zero entry, it must be  contained in the sublattice 
\[
 \mathbb Z[\zeta_3]^4 \subset \Lambda_2= \mathbb Z[\zeta_3]^4 + \mathbb Z \frac{1+2\zeta_3}{3}  (1,1,1,1). 
\]
Thus,  
$w_{i}^{-1}$ or its conjugate is also an Eisenstein integer and  we conclude that  $w_i$ is a unit in $ \mathbb Z[\zeta_3]$, for all $1\leq i \leq 4$.   
On the other hand,  the  product of $A$ and the lattice vector $\frac{1+2\zeta_3}{3}  (1,1,1,1) \in \Lambda_2$ belongs  to $\Lambda_1$, which  means that 
\[
w_i \frac{1+2\zeta_3}{3} \in \mathbb Z[\zeta_3] \qquad \makebox{or} \qquad 
 w_i \frac{1+2\zeta_3^2}{3} \in \mathbb Z[\zeta_3]. 
\]
A contradiction. 
\end{proof}

\subsection{Rigid quotients of $E^3$ by the exceptional groups}

Let $X := E^3 / G$ be a quotient of  $E^3$ by a rigid diagonal action of one of the four exceptional groups $G$. Then   according to Proposition \ref{norigid} and Remark \ref{structureHeis}, (2)  the action is not free and $X$ has singular points. 
The singular points of  $X$ are precisely the images of the finitely many   points $p=(p_1,p_2,p_3)$ in $E^3$ with non trivial stabilizer group. 
The stabilizer of a point $p\in E^3$ is the intersection of the cyclic groups $G_{p_i}$ and therefore cyclic (cf. Remark \ref{RemRET}).  
We show that if $G = \mathbb Z_2 \times \mathbb Z_4$ or $\mathbb Z_2^2 \rtimes_{\varphi_4}  \mathbb Z_4$, then $X$ always has non  canonical singularities of type $\frac{1}{4}(1,1,1)$. Therefore we shall restrict ourselves to the two remaining exceptional groups $\mathbb Z_3^2$ or $\He(3)$.
Since a  non trivial cyclic subgroup of $\mathbb Z_3^2$ or $\He(3)$ is isomorphic to $\mathbb Z_3$, the threefold  $X$ has only cyclic quotient singularities of type 
$\frac{1}{3}(1,1,1)$ and $\frac{1}{3}(1,1,2)$. 
We point out that these singularities are canonical, more precisely 
$\frac{1}{3}(1,1,2)$ is terminal by the  Shepherd-Barron-Tai criterion (see \cite[ p. 376 Theorem]{R87})  and  $\frac{1}{3}(1,1,1)$ is Gorenstein. 
In particular, for any resolution of singularities $\rho \colon \widehat{X} \to X$ it holds $\kappa(\widehat{X})=\kappa(X)= \kappa(E^3)=0$. 

In terms   of infinitesimal deformation theory these singularities  are also well behaved in the following sense:

\begin{proposition}\label{resolution}
Let $X$ be a threefold with only isolated singularities of type $\frac{1}{3}(1,1,1)$ and  $\frac{1}{3}(1,1,2)$, then $X$ is canonical and there is  a resolution of singularities 
$\rho \colon \widehat{X} \to X$, such that   $H^1(\widehat{X}, \Theta_{\widehat{X}}) \simeq H^1(X, \Theta_X)$. 

In particular, if $X$ is rigid, then also $\widehat{X}$ is rigid.
\end{proposition}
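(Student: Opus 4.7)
The plan is to produce an explicit resolution $\rho\colon\widehat X\to X$ and compare the two deformation spaces via the Leray spectral sequence for $\rho$, combined with the local-to-global $\Ext$ spectral sequences on both $X$ and $\widehat X$.

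Canonicality is immediate from the paragraph preceding the statement: a $\frac{1}{3}(1,1,1)$-point is Gorenstein canonical (age $1$) and a $\frac{1}{3}(1,1,2)$-point is terminal by Shepherd--Barron--Tai; since the singular locus is isolated, $X$ is canonical. For the resolution I would work locally at each singular point using toric methods. A $\frac{1}{3}(1,1,1)$-point is crepantly resolved by a single blow-up with exceptional divisor $E\simeq\PP^2$ and normal bundle $N_{E/\widehat X}\simeq\hol_{\PP^2}(-3)$; a $\frac{1}{3}(1,1,2)$-point is resolved by a weighted blow-up of weights $(1,1,2)$ followed by the blow-up of the residual $\frac{1}{2}(1,1)$-point, yielding a simple normal crossing exceptional divisor whose components are a $\PP^2$ and a Hirzebruch surface, each with negative normal bundle.

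Since the singularities are isolated quotient singularities in dimension $\geq 3$, Schlessinger's theorem gives $\mathcal{E}xt^1(\Omega^1_X,\hol_X)=0$, and the local-to-global $\Ext$ spectral sequence (as recalled in Remark \ref{kuranishi}) yields $H^1(X,\Theta_X)\simeq \Ext^1(\Omega^1_X,\hol_X)$, the full space of first-order deformations of $X$. Smoothness of $\widehat X$ similarly gives $H^1(\widehat X,\Theta_{\widehat X})\simeq\Ext^1(\Omega^1_{\widehat X},\hol_{\widehat X})$. The Leray spectral sequence for $\rho$ provides the low-degree exact sequence
$$0\to H^1(X,\rho_*\Theta_{\widehat X})\to H^1(\widehat X,\Theta_{\widehat X})\to H^0(X,R^1\rho_*\Theta_{\widehat X})\to H^2(X,\rho_*\Theta_{\widehat X}),$$
and reflexivity (both $\rho_*\Theta_{\widehat X}$ and $\Theta_X$ are reflexive and agree on the smooth locus, which has codimension $\geq 3$) identifies $\rho_*\Theta_{\widehat X}\simeq\Theta_X$.

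The main technical obstacle is establishing $R^1\rho_*\Theta_{\widehat X}=0$. Being supported on the finite singular set, this is a purely local question, and by the theorem on formal functions it reduces to $H^1(mE,\Theta_{\widehat X}|_{mE})=0$ for every $m\geq 1$, where $E$ is the exceptional divisor above a fixed singular point. The short exact sequences
$$0\to\Theta_{\widehat X}|_{(m-1)E}\otimes\hol_{\widehat X}(-E)\to\Theta_{\widehat X}|_{mE}\to\Theta_{\widehat X}|_E\to 0$$
combined with the normal bundle sequence $0\to\Theta_E\to\Theta_{\widehat X}|_E\to N_{E/\widehat X}\to 0$ reduce the vanishing to $H^1(E,\Theta_E)=0$ (rigidity of $\PP^2$ and of $\F_n$) together with the vanishing of $H^1$ of sufficiently negative line bundles on Fano surfaces, which follows from Kodaira vanishing. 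Feeding these inputs back into the Leray sequence produces $H^1(\widehat X,\Theta_{\widehat X})\simeq H^1(X,\Theta_X)$, and hence rigidity of $X$ implies rigidity of $\widehat X$.
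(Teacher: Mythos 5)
Your global architecture coincides with the paper's: both reduce the statement to the two local assertions $\rho_*\Theta_{\widehat X}\simeq\Theta_X$ and $R^1\rho_*\Theta_{\widehat X}=0$ and then read off the desired isomorphism from the low-term exact sequence of the Leray spectral sequence. The difference is that the paper does not reprove these local facts but quotes them from \cite[Corollary 5.9, Proposition 5.10]{BG}, whereas you attempt to establish them directly --- and it is exactly there that your argument has two genuine gaps.

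First, $\rho_*\Theta_{\widehat X}$ is not automatically reflexive: a coherent sheaf on a normal variety is reflexive if and only if it is torsion-free and its sections extend over closed subsets of codimension at least two, and for $\rho_*\Theta_{\widehat X}$ the latter would require extending a vector field on $\widehat X$ across the exceptional \emph{divisor}, which is not a codimension-two condition upstairs. The inclusion $\rho_*\Theta_{\widehat X}\hookrightarrow\Theta_X$ is an equality precisely when every local vector field at a singular point lifts to the chosen resolution; this holds here, but it is a computation, not a formal consequence of reflexivity. Second, and more seriously, your reduction of $R^1\rho_*\Theta_{\widehat X}=0$ to ``$H^1(E,\Theta_E)=0$ by rigidity of $\PP^2$ and of $\mathbb{F}_n$'' fails: $\mathbb{F}_n$ is rigid only for $n\leq 1$, and for the resolution of $\frac{1}{3}(1,1,2)$ that you describe the exceptional component coming from the weighted blow-up is the resolved $\PP(1,1,2)$, i.e.\ the Hirzebruch surface $\mathbb{F}_2$, which has $h^1(\Theta_{\mathbb{F}_2})=1$. (Also, the residual singularity is a threefold point of type $\frac{1}{2}(1,1,1)$, not $\frac{1}{2}(1,1)$, and since the exceptional fibre is reducible the sequence $0\to\Theta_E\to\Theta_{\widehat X}|_E\to N_{E/\widehat X}\to 0$ must be replaced by a Mayer--Vietoris type argument on the two components and their intersection curve.) The vanishing therefore cannot be deduced componentwise from rigidity of the exceptional surfaces; one has to compute $H^1$ of $\Theta_{\widehat X}$ restricted to the non-reduced, reducible infinitesimal neighbourhoods directly, which is what \cite{BG} does. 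Until these two local statements are actually proved, your text is a plausible plan rather than a proof.
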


\begin{proof}
In \cite[Corollary 5.9, Proposition 5.10]{BG} the authors showed that a germ $(U,p_0)$ of a singularity of type $\frac{1}{3}(1,1,1)$ or $\frac{1}{3}(1,1,2)$ has a resolution $\rho \colon \widehat{U} \to U$ such that 
\[
\rho_{\ast} \Theta_{\widehat{U}}= \Theta_U \qquad \makebox{and} \qquad R^1  \rho_{\ast} \Theta_{\widehat{U}}=0.  \qquad \qquad (\ast)
\]
These resolutions can be glued to obtain a resolution $\rho \colon \widehat{X} \to X$ with the same property $(\ast)$. The low term exact sequence of the Leray spectral sequence 
\[
0 \to H^1(X,\rho_{\ast} \Theta_{\widehat{X}}) \to H^1(\widehat{X},\Theta_{\widehat{X}}) \to H^0(X,R^1\rho_{\ast} \Theta_{\widehat{X}}) \to \ldots 
\]
gives us an  isomorphism $H^1(\widehat{X},\Theta_{\widehat{X}})  \simeq H^1(X,\Theta_X)$. 
\end{proof}

\begin{rem}
Since canonical singularities are rational (see e.g.  \cite[ p. 363 (3.8)]{R87}), Leray's spectral sequence implies that for any resolution of singularities $\rho \colon \widehat{X} \ra X$
the irregularities 
\[
q_i(\widehat{X}) := h^i(\widehat{X},\hol_{\widehat{X}}) \qquad \makebox{and} \qquad  q_i(X):= h^i(X, \hol_X)
\]
 coincide. Since  $H^{i,0}(\widehat{X})  \simeq H^{i,0}(E^3)^G$, we can compute the irregularities $q_i$ in terms of invariant holomorphic differential forms:  
\[
q_i(X)=q_i(\widehat{X})=\dim_{\mathbb C}\big(H^{i,0}(E^3)^G\big).  
\]
It is common to denote the  top irregularity $q_3$ by $p_g$ and call it the geometric genus  $X$ or $\widehat{X}$, respectively. 
\end{rem}

\begin{rem}\label{InvS}   Let $X:=E^3/G$, where $G$ is a finite group acting diagonally on $E^3$. 

1) According to Proposition  \ref{quadraticdiff}
 rigidity means that none of the quadratic differentials $dz_i \otimes dz_j$ is $G$- invariant. This implies that $q_2(X)=0$, since 
none of the $2$-forms $dz_i \wedge dz_j$ can then be invariant either. By the same reason, we have 
$q_1(X)=0$. 

2)  If  $dz_1 \wedge dz_2 \wedge dz_3$ is $G$-invariant, then the canonical sheaf of $X$ is trivial, hence $X$ is Gorenstein and $p_g(X)=1$. Otherwise, $p_g(X)=0$, and in this case $\hol(3K_X) \simeq \hol_X$.

3) If $p_g(X)=1$, then  $X$ is a Gorenstein Calabi-Yau threefold and  its  singularities must be of type $\frac{1}{3}(1,1,1)$. 
If the $G$-action is moreover rigid, none of the quadratic differentials $dz_i \otimes dz_j$ is $G$- invariant and  an easy calculation using the invariance of $dz_1 \wedge dz_2 \wedge dz_3$ shows that there are no invariant forms of type $(1,2)$ on $E^3$.
In particular, the topological Euler number is given by 
$e(X)=2  \dim_{\mathbb C}\big( H^{1,1}(E^3)^G\big)$, because $H^i(X,\mathbb C) \simeq H^i(E^3,\mathbb C)^G$ for all $i$. 

4) Similarly,  if $p_g(X)=0$ and $X$ is rigid, then we have 
\[
e(X)=2  \big[1+  \dim_{\mathbb C}\big( H^{1,1}(E^3)^G\big) -  \dim_{\mathbb C}\big( H^{1,2}(E^3)^G\big)\big].
\] 

\end{rem}

\begin{lemma}\label{Euler}
Let $X$ be a quotient of  $E^3$ by a rigid diagonal action of $\mathbb Z_3^2$ or $\He(3)$.
 Let $N_{gor}$ be the number of singularities of type $\frac{1}{3}(1,1,1)$ and $N_{ter}$ be the number of singularities of type $ \frac{1}{3}(1,1,2)$, then 
\[
e(X)=\frac{2}{3}(N_{gor}+N_{ter}). 
\]
\end{lemma}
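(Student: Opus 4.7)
The plan is to compute $e(X)$ via the classical topological quotient formula
\[
e(X) = e(E^3/G) = \frac{1}{|G|} \sum_{g \in G} e\bigl((E^3)^g\bigr),
\]
which follows by combining the Lefschetz fixed-point formula $L(g) = e(Y^g)$ with the identification $H^\ast(Y/G; \mathbb{Q}) = H^\ast(Y; \mathbb{Q})^G$ and averaging traces. Once we have this, everything reduces to counting fixed points.

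First I would isolate the contribution of each $g \in G$. For $g = 1_G$ we have $e(E^3) = 0$. For $g \neq 1_G$, the standing assumption that $G$ acts faithfully on every factor $E_i$ forces $g$ to act non-trivially on each $E_i$; hence $(E_i)^g$ is either empty (if $g$ acts as a non-zero translation on $E_i$) or a finite set of points. Consequently $(E^3)^g = \prod_i (E_i)^g$ is a finite set and $e((E^3)^g)$ equals its cardinality.

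Next I would rewrite the sum by double counting pairs $(p,g) \in E^3 \times (G\setminus \{1_G\})$ with $gp=p$:
\[
\sum_{g \neq 1_G} \bigl|(E^3)^g\bigr| \;=\; \sum_{p \in E^3} \bigl(|G_p| - 1\bigr).
\]
Since $G$ is $\mathbb{Z}_3^2$ or $\He(3)$, every non-trivial stabilizer is cyclic of order $3$, so each point with non-trivial stabilizer contributes exactly $2$. Letting $F \subset E^3$ denote the set of such points, the sum equals $2|F|$.

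Finally, $F$ is $G$-invariant and the $G$-orbits on $F$ are in bijection with the singular points of $X$; each orbit has size $|G|/3$. Therefore $|F| = \tfrac{|G|}{3}(N_{gor}+N_{ter})$, and substituting back gives
\[
e(X) \;=\; \frac{1}{|G|} \cdot 2\cdot \frac{|G|}{3}(N_{gor} + N_{ter}) \;=\; \frac{2}{3}(N_{gor} + N_{ter}).
\]
The only delicate step is invoking the quotient formula correctly and checking that $(E^3)^g$ is finite for every $g\neq 1_G$; both are immediate from the faithfulness of the action on each factor, so the argument is genuinely a short bookkeeping exercise.
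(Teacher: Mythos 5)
Your argument is correct, and it reaches the formula by a genuinely different route than the paper. The paper stratifies the space: it removes the singular set $S$ and its preimage, notes that $\pi\colon E^3\setminus\pi^{-1}(S)\to X\setminus S$ is an unramified cover of degree $d=|G|$, and combines $e(E^3)=0$ with additivity to get $-|\pi^{-1}(S)|=d\,(e(X)-|S|)$, finishing with the observation that each fibre over a singular point has $d/3$ elements. You instead decompose over the group, using $e(X)=\frac{1}{|G|}\sum_{g}e\bigl((E^3)^g\bigr)$ and then double-counting fixed points; the two computations are dual ways of measuring the same ``defect from freeness.'' Your key inputs all check out: faithfulness on each factor is a standing hypothesis of the paper's Section 3, so $(E^3)^g$ is finite for $g\neq 1_G$; both $\mathbb Z_3^2$ and $\He(3)$ have exponent $3$ and point stabilizers are cyclic, so every non-trivial stabilizer has order exactly $3$; and the orbits of points with non-trivial stabilizer, each of size $|G|/3$, biject with $\Sing(X)$, whose cardinality is $N_{gor}+N_{ter}$ since the local linear actions are of type $\frac13(1,1,1)$ or $\frac13(1,1,2)$ and hence genuinely singular. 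What the paper's approach buys is minimal machinery (only additivity and multiplicativity of $e$ under unramified covers); what yours buys is a template that generalizes immediately to actions with larger or non-constant stabilizer orders, where the paper's ``each fibre has $d/3$ points'' step would need to be replaced by a case analysis.
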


\begin{proof}
The quotient map  $\pi \colon E^3 \to X$ restricts to an unramified cover 
\[
\pi \colon E^3\setminus \pi^{-1}(S) \to X\setminus S 
\]
of degree $d=9$ or $27$, where $S:=\Sing(X)$. Since the Euler number is additive and $e(E^3)=0$, we obtain: 
\[
-\vert  \pi^{-1}(S) \vert = e\big(E^3\setminus  \pi^{-1}(S)\big)=d e\big(X\setminus S\big) = d\big(e(X) - \vert S \vert \big). 
\]
We conclude the proof, because the fibre of $\pi$ over each singularity  consists of $d/3$ points. 
\end{proof}

\begin{proposition}\label{welcheCalabi}
Let $X$ be a quotient of $E^3$ by a rigid action of $\mathbb Z_3^2$ or $\He(3)$. 
\begin{enumerate}
\item If $p_g=0$ then $X$ has $9$ terminal singularities of type $\frac{1}{3}(1,1,2)$. 
\item
If $p_g=1$ then $\Sing(X)$ consists of  $9$ or $27$  Gorenstein singularites of type $\frac{1}{3}(1,1,1)$. The latter happens if and only if $X$ is isomorphic to Beauville's threefold $X_{3,3}$. 
\end{enumerate}
\end{proposition}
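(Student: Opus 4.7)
The plan is to combine a local analysis of stabilizer actions with a Hodge-theoretic global count of the fixed points. I begin by pinning down the possible local singularity types. Since both $\mathbb Z_3^2$ and $\He(3)$ have exponent $3$, every non-trivial cyclic subgroup has order $3$; combined with the branching signature $[3,3,3]$ of the rigid $G$-action on each factor, this forces the stabilizer of any isolated fixed point $q\in E^3$ to be cyclic of order $3$ acting on $T_qE^3$ by a diagonal matrix $\diag(\zeta_3^{a_1},\zeta_3^{a_2},\zeta_3^{a_3})$ with $a_i\in\{1,2\}$. Up to equivalence only two local models occur: $\tfrac{1}{3}(1,1,1)$ (Gorenstein, when $a_1+a_2+a_3\equiv 0 \bmod 3$) and $\tfrac{1}{3}(1,1,2)$ (terminal, otherwise).

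Next I would relate the local type to $p_g$. If $\chi_i$ denotes the canonical character of the $G$-action on $H^0(E_i,\Omega^1_{E_i})$ and $\delta:=\chi_1\chi_2\chi_3$, then Remark \ref{InvS}(2) gives $p_g(X)=1$ iff $\delta$ is trivial, and a singular point with stabilizer $\langle h\rangle$ is of type $\tfrac{1}{3}(1,1,1)$ precisely when $\delta(h)=1$. Hence $p_g=1$ forces every singular point to be Gorenstein of type $\tfrac{1}{3}(1,1,1)$, which is the qualitative half of (2). For the $p_g=0$ claim in (1), the local-type condition reduces to a purely group-theoretic question: which cyclic order-$3$ subgroups $H\leq G$ are admissible stabilizers ($H\not\leq T_i:=\ker\chi_i$ for every $i$), and among these, which are terminal ($H\not\leq\ker\delta$).

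To count singularities I would combine Lemma \ref{Euler}, $e(X)=\tfrac{2}{3}(N_{gor}+N_{ter})$, with the Hodge-theoretic expression for $e(X)$ in Remark \ref{InvS}(3)--(4). Rigidity gives $q_1(X)=q_2(X)=0$, and K\"unneth plus character invariants yield $\dim H^{1,1}(E^3)^G=\#\{(i,j):\chi_i=\chi_j\}$ and an analogous formula $\dim H^{1,2}(E^3)^G=\#\{(k,i,j):\,i<j,\ \chi_k=\chi_i+\chi_j\}$. In the $p_g=1$ case, rigidity $\chi_i+\chi_j\neq 0$ combined with $\chi_1+\chi_2+\chi_3=0$ forces either all $\chi_i$ distinct (giving $\dim H^{1,1}=3$, $\dim H^{1,2}=0$, so $e(X)=6$ and $N=9$) or all $\chi_i$ equal (giving $\dim H^{1,1}=9$, so $e(X)=18$ and $N=27$), proving (2). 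In the $p_g=0$ case the number of terminal singular points equals $\sum_H |\Fix_H(E^3)|/|G/H|=\sum_H 27\cdot 3/|G|$, taken over terminal stabilizers $H$; a case analysis using the four lines of $(\mathbb Z_3^2)^\ast$ for $G=\mathbb Z_3^2$ and the four normal subgroups $A_i$ of Remark \ref{structureHeis}(1) for $G=\He(3)$ yields exactly $9$ terminal singularities uniformly, whether the three $\chi_i$ are all distinct or two of them coincide.

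Finally, the $27$-singularity case is exactly the case $\dim H^{1,1}(E^3)^G=9$, which forces all three $\chi_i$ to coincide; then the $G$-action on $E^3$ factors through $G/\bigcap_i\ker\chi_i\cong\mathbb Z_3$ acting by scalar multiplication with $\zeta_3\cdot\mathrm{Id}$ on each factor, which is by definition Beauville's variety $X_{3,3}$, and conversely $X\cong X_{3,3}$ forces this Hodge pattern. The main obstacle, in my view, is the uniform count of $9$ terminal singularities in the $p_g=0$ case: the key combinatorial input is that in $\mathbb F_3^2$ the sum of three characters lying on three distinct lines lies on one of those three lines whenever it is nonzero, so $\ker\delta$ always coincides with some $T_i$ rather than with an admissible stabilizer; in the more delicate configurations where two of the $\chi_i$ coincide an additional $9$ Gorenstein singularities can appear, but the terminal count remains $9$, as is also confirmed by the MAGMA classification referenced in the introduction.
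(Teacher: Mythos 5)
Your proof of part (2) follows the paper's own route: Lemma \ref{Euler} together with the Euler-number formula of Remark \ref{InvS} (3), plus the observation that rigidity and $\chi_1\chi_2\chi_3=\chi_{triv}$ force the three canonical characters to be either all equal ($\dim H^{1,1}=9$, $N=27$) or all distinct ($\dim H^{1,1}=3$, $N=9$); this is exactly the two-case analysis in the paper. One caveat there: the $G$-action does not literally ``factor through'' $G/\bigcap_i\ker\chi_i$, since $A=\bigcap_i\ker\chi_i$ acts by nontrivial translations; you still need the intermediate identification $E^3/A\simeq E^3$, which is precisely the appeal to \cite[Corollary 13.3.5]{BL} in the paper. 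This is recoverable but should be stated. For part (1), however, you take a genuinely different route. The paper applies the orbifold Riemann--Roch formula: since $3K_X$ is torsion and $\chi(\hol_X)=1$, each $\frac{1}{3}(1,1,2)$ point contributes $\frac{8}{3}$ to $24\chi(\hol_X)=24$, so $N_{ter}=9$ falls out with no case analysis at all. You instead count fixed loci directly: each admissible order-three stabilizer has $27$ fixed points on $E^3$, contributing $9$ singular points per conjugacy class, and your combinatorial lemma --- that the sum of three nonzero characters of $\mathbb Z_3^2$ spanning three distinct lines, if nonzero, again spans one of those three lines --- ensures that exactly one conjugacy class of terminal stabilizers survives. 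That lemma is correct (reduce by $\GL(2,\mathbb F_3)$ to lines $\langle(1,0)\rangle,\langle(0,1)\rangle,\langle(1,1)\rangle$ and check the sum never lands on $\langle(1,2)\rangle$), and the case where two characters coincide indeed yields one terminal and one Gorenstein class, consistent with $X_1$ in Theorem \ref{rigsingkod0}. Your approach is more elementary and yields more information --- it also determines $N_{gor}$ in the $p_g=0$ case and reproves part (2) by counting --- but it costs you the lemma plus nontrivial bookkeeping for $\He(3)$, where admissible stabilizers are non-normal and come in conjugacy classes of three inside the subgroups $A_i$ of Remark \ref{structureHeis} (1); you should make explicit that each \emph{class}, not each subgroup, contributes the $9$ points. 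The paper's Riemann--Roch argument buys uniformity and brevity at the price of invoking \cite{R87}.
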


\begin{proof}
1) Thanks to the orbifold Riemann-Roch formula (see e.g.  \cite[ p. 412 Corollary 10.3]{R87}) it is possible to express $ \chi(\mathcal O_X)$ in terms of the intersection of Chern classes of a resolution  $\rho \colon \widehat{X} \ra X$ and local data coming from the singularities. In fact, it reads
\[
24\chi(\mathcal O_X) = - c_1(\rho^*K_X) \cdot c_2(\widehat{X}) + \sum_{\tiny{x ~ ter}} \frac{n_x^2-1}{n_x}, 
\]
where  the sum runs over all terminal singularities $\frac{1}{n_x}(1,a_x,n_x-a_x)$ of $X$. Since $\hol(3K_X) \simeq \hol_X$, the first summand is zero. Moreover,   all terminal singularities of $X$ are  of type $\frac{1}{3}(1,1,2)$  and $\chi(\hol_X) = 1$. Hence the formula implies $N_{ter}=9$.  

2) According to Lemma \ref{Euler} and Remark \ref{InvS} 3), we have 
\[
N_{gor}=3 \dim_{\mathbb C}\big( H^{1,1}(X)^G\big).
\]
As in the proof of Proposition 
\ref{Hodgesmooth} we consider $X$ as a $\mathbb Z_3^2$-quotient of a three dimensional torus und use 
 the 
analytic representation $\rho$ to compute the dimension of the space of invariant $(1,1)$-forms. 
 The 
analytic representation $\rho$ is easy to describe, because it is a  
sum of three non-trivial characters $\chi_i$ of $\mathbb Z_3^2$.  
The invariance of $dz_1 \wedge dz_2 \wedge dz_3$ imposes the condition   $\chi_1 \chi_2 \chi_3 =\chi_{triv}$. Moreover,  we 
 have $\chi_1 \neq \chi_2^2$, because  $dz_1 \otimes dz_2$ is not invariant.  Whence  there are  two cases: 

\noindent 
\underline{Case 1:}  $\chi_1=\chi_2$, then $\chi_3$ is also equal to $\chi_1$ and up to an automorphism of $\mathbb Z_3^2$ it holds 
$\rho(a,b)=\diag\big(\zeta_3^{a},\zeta_3^{a}, \zeta_3^{a} \big)$. In this case all $(1,1)$ forms are invariant i.e., 
$\dim_{\mathbb C}\big( H^{1,1}(X)^G\big)=9$ and we  conclude $N_{gor}=27$. 
The argument we used in  the  proof of Theorem \ref{oneisoclass} i.e. \cite[Corollary 13.3.5]{BL} tells us that $X \simeq X_{3,3}$.

\underline{Case 2:}  $\chi_1 \neq \chi_2$, then since  $\chi_1 \neq \chi_2^2$
there exists an automorphism  of $\mathbb Z_3^2$ such that 
$\chi_1(a,b)=\zeta_3^{a}$ and $\chi_2(a,b)=\zeta_3^{b}$. This implies that  $\chi_3(a,b)=\zeta_3^{2a+2b}$ and shows that  the analytic representation is 
$\rho(a,b)=\diag\big(\zeta_3^{a},\zeta_3^{b}, \zeta_3^{2a+2b} \big)$. 
We compute  $\dim_{\mathbb C}\big( H^{1,1}(X)^G\big)=3$ and obtain $N_{gor}=9$. 
\end{proof}

\begin{proposition}
Let $X$ be a quotient of  $E^3$ by a rigid diagonal action of $G=\mathbb Z_2 \times \mathbb Z_4$ or $\mathbb Z_2^2 \rtimes_{\varphi_4} \mathbb Z_4$, then $X$ has 
non-canonical singularities. 
\end{proposition}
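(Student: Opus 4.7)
The plan is to exhibit on any such $X = E^3/G$ a cyclic quotient singularity of type $\frac{1}{4}(1,1,1)$ (equivalently $\frac{1}{4}(3,3,3)$), which is non-canonical by the Shepherd-Barron--Tai age criterion, while all other singularities that can appear turn out to be canonical.

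First I would classify the possible singularity types. By Proposition \ref{wallgrps} each $G$ acts on $E_i \simeq \mathbb{C}/\mathbb{Z}[i]$ with branching signature $[2,4,4]$, so point stabilizers on each factor are cyclic of order $2$ or $4$. Hence the stabilizer $G_p$ of $p=(p_1,p_2,p_3)\in E^3$, being the intersection $\bigcap_i G_{p_i}$ of such cyclic groups, is cyclic of order dividing the exponent of $G$; a direct check via Lemma \ref{orderel} shows that both exceptional groups have exponent $4$. An order-$2$ stabilizer produces the canonical singularity $\frac{1}{2}(1,1,1)$, while an order-$4$ stabilizer $\langle g\rangle$ produces $\frac{1}{4}(c_1,c_2,c_3)$, where $i^{c_i}=\overline{\chi_{E_i}(g)}$ is the linear part of $g$ on $E_i$ (Remark \ref{canrep}) and $c_i\in\{1,3\}$; Shepherd-Barron--Tai renders this latter singularity non-canonical iff $c_1=c_2=c_3$.

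It therefore suffices to find, in every rigid action, an order-$4$ element $g \in G$ with
\[
\chi_{E_1}(g)=\chi_{E_2}(g)=\chi_{E_3}(g)\in\{i,-i\}.
\]
For such $g$ one has $g\notin T_i$ for every $i$, so $g$ has fixed points on each $E_i$ and hence on $E^3$; the stabilizer of such a fixed point is cyclic, contains $\langle g\rangle$, and therefore equals $\langle g\rangle$ since $G$ has exponent $4$. To construct $g$ I would pass to the abelianisation: for both groups $G^{ab}\simeq \mathbb{Z}_2\times\mathbb{Z}_4$, and each $\chi_{E_i}$ factors through $G^{ab}$ as one of exactly four faithful characters of the relevant $\mathbb{Z}_4$-quotient, splitting into two complex-conjugate pairs according to which of the two admissible translation subgroups is involved. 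A short computation shows that all four characters have the same square, namely the unique order-$2$ character $\epsilon$ of $G^{ab}$ nontrivial only on the $\mathbb{Z}_4$-factor; so every element $h \in G^{ab}$ with $\epsilon(h)=-1$ is automatically sent into $\{\pm i\}$ by each $\chi_{E_i}$.

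Rigidity forbids a character and its complex conjugate from both appearing among the $\chi_{E_i}$, so up to conjugation the triple $(\chi_{E_1},\chi_{E_2},\chi_{E_3})$ is assembled from just two characters, both of which take the value $i$ on the order-$4$ element $(0,1)\in G^{ab}$; any lift of $(0,1)$ to $G$ has order exactly $4$ (since $G$ has exponent $4$) and completes the construction. The main point to verify is the uniform identity $\chi_{E_i}^2=\epsilon$, which collapses a potentially messy $\mathbb{Z}_4$-valued matching problem to a single $\mathbb{Z}_2$-valued condition and allows the few admissible character patterns to be handled by a single witness element.
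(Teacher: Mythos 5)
Your proof is correct and follows essentially the same route as the paper: both reduce to the four order-$4$ characters of $G^{ab}\simeq\mathbb{Z}_2\times\mathbb{Z}_4$, note that they form two complex-conjugate pairs so that the rigidity condition $\chi_{E_i}\neq\overline{\chi_{E_j}}$ leaves at most two distinct characters among the $\chi_{E_i}$, and then exhibit a group element acting as $\zeta_4\cdot\Id$, i.e.\ a point of type $\frac{1}{4}(1,1,1)$. The one caveat is that your single witness $(0,1)$ fails for the mixed pattern $\{\zeta_4^b,\,(-1)^a\zeta_4^{3b}\}$, where one must instead take $(1,1)$ (this is the paper's third case); the slip is harmless, since your identity $\chi_{E_i}^2=\epsilon$ combined with $\chi_{E_i}\neq\overline{\chi_{E_j}}$ does guarantee that a common witness with value in $\{\pm i\}$ exists.
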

\begin{proof}
We show that $\rho(g)=\zeta_4 \cdot  \Id$, for some $g\in G$, which implies the existence of a singularity of type $\frac{1}{4}(1,1,1)$. 
As above,  we may consider $X$ as a quotient by $G=\mathbb Z_2 \times \mathbb Z_4$. The analytic representation $\rho$ is a sum of three characters 
$\chi_i$ of $G$ of order $4$, such that $\chi_i \neq \overline{\chi_j}$ for $i \neq j$. The order $4$ characters of $G$ are: 
\[
\zeta_4^b, \quad  \zeta_4^{3b}, \quad   (-1)^a  \zeta_4^b \quad \makebox{and}  \quad (-1)^{a} \zeta_4^{3b}.  
\]
Since they  come in pairs of conjugates, we conclude  that 
two of the three characters $\chi_i$ in the analytic representation must be the same. Without loss of generality $\chi_1=\chi_2=\zeta_4^b$ and  $\rho$ is equal to 
\[
\diag(\zeta_4^b,\zeta_4^b,\zeta_4^b), \quad \diag(\zeta_4^b,\zeta_4^b,(-1)^a  \zeta_4^b ) \quad \makebox{or} \quad 
 \diag(\zeta_4^b ,\zeta_4^b,(-1)^{a} \zeta_4^{3b}).
 \]

\end{proof}

Recall that  Beauville's threefold $X_{3,3}$ is simply connected, see Remark \ref{Beau}. This allows us to show: 

\begin{proposition}\label{BeauvilleUnifor}
Let $X$ be a quotient of $E^3$ by a rigid diagonal action of $\mathbb Z_3^2$ or $\He(3)$ with  $p_g=1$ and $9$ singularities. Then 
$X$ is  uniformized by Beauville's threefold $X_{3,3}$ by a degree three map. 
In particular,  $\pi_1(X)$ is isomorphic to $\mathbb Z_3$. 
\end{proposition}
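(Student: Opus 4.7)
The plan is to exhibit $X$ as an étale $\mathbb Z_3$-quotient of Beauville's threefold $X_{3,3}$, which immediately delivers both the existence of a degree-three uniformizing map and, via the simple connectedness of $X_{3,3}$ (Remark \ref{Beau}, together with the fact that a resolution of $X_{3,3}$ is simply connected and the induced map $\pi_1(\widetilde{X_{3,3}}) \to \pi_1(X_{3,3})$ is surjective), the identification $\pi_1(X) \cong \mathbb Z_3$. Concretely, I would produce a normal subgroup $H \trianglelefteq G$ of index three such that $E^3/H \cong X_{3,3}$ and such that the residual $G/H \cong \mathbb Z_3$-action on $X_{3,3}$ is free.

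For the choice of $H$: by the proof of Proposition \ref{welcheCalabi}, Case 2, the analytic representation factors through $\bar G \cong \mathbb Z_3^2$ and, after an automorphism of $\bar G$, equals $\rho(a,b) = \diag(\zeta_3^a, \zeta_3^b, \zeta_3^{2a+2b})$. The subgroup $\langle(1,1)\rangle \leq \bar G$ is exactly the $\rho$-preimage of the scalar matrices $\langle \zeta_3 \Id\rangle$ and has index three. I let $H \trianglelefteq G$ be its preimage: if $G = \mathbb Z_3^2$, then $H = \langle(1,1)\rangle$ is cyclic of order three; if $G = \He(3)$, then $H = \alpha^{-1}(\langle(1,1)\rangle)$ is an abelian group of order nine containing the center $C_3$.

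To identify $E^3/H$ with $X_{3,3}$: in the cyclic case, a generator $h \in H$ acts as $z \mapsto \zeta_3 z + t$, and the affine change of coordinates $z \mapsto z - c$ with $c$ satisfying $(1 - \zeta_3)c = t$ — solvable on each factor since multiplication by $1-\zeta_3$ is a degree-three isogeny on $E = \mathbb C/\mathbb Z[\zeta_3]$ — conjugates the action to the standard scalar action and yields $E^3/H \cong X_{3,3}$. In the Heisenberg case, I first quotient by $C_3$ (which acts by translations, by Remark \ref{structureHeis}) to obtain an abelian variety $T := E^3/C_3$ carrying a $\mathbb Z_3 \cong H/C_3$-action with scalar linear part $\zeta_3 \Id$; \cite[Corollary 13.3.5]{BL}, as already invoked in the proof of Theorem \ref{oneisoclass}, forces $T \cong E^3$, and the coordinate change above then produces $E^3/H \cong X_{3,3}$ once more.

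Finally, for freeness of the residual action: any $g \in G \setminus H$ has non-scalar $\rho(g)$, and a direct inspection of the explicit formula shows that each such matrix has eigenvalue $1$ with multiplicity exactly one. Hence a fixed point of $g$ on $E^3$ would produce a positive-dimensional fixed curve, which would descend to a curve of quotient singularities on $X$, contradicting the hypothesis that $X$ has only nine \emph{isolated} singularities of type $\tfrac{1}{3}(1,1,1)$. Therefore $G/H$ acts freely on $X_{3,3}$, the quotient $X_{3,3} \to X$ is an étale Galois cover of degree three, and $\pi_1(X) \cong \mathbb Z_3$ follows. The main technical point is the $\He(3)$-case identification $E^3/H \cong X_{3,3}$, which relies on the structure theorem \cite[Cor.~13.3.5]{BL} to recognize $T$ as $E^3$; once that is in hand, the rest of the argument is geometrically transparent.
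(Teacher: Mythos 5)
Your proposal is correct and follows the paper's strategy in its two main steps: you choose the same index-three subgroup $H$ (the preimage of $\langle(1,1)\rangle$ under the identification of the analytic representation with $\rho(a,b)=\diag(\zeta_3^a,\zeta_3^b,\zeta_3^{2a+2b})$), and you identify $E^3/H$ with $X_{3,3}$ via \cite[Corollary 13.3.5]{BL} exactly as the paper does, with the welcome extra detail that the translation part of the generator can be conjugated away because $1-\zeta_3$ is an isogeny of $E$. Where you genuinely diverge is in proving that $u\colon X_{3,3}\to X$ is unramified. The paper argues through local rings: it uses the degree count ($27$ singular points of type $\frac13(1,1,1)$ upstairs against $9$ downstairs, with $\deg u=3$) to conclude that $H_x=G_x$ for every $x\in E^3$, whence $p_1^{\ast}$ and $p_2^{\ast}\circ u^{\ast}$ have the same image and $u$ is a local biholomorphism. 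You instead observe that every $g\in G\setminus H$ has linear part with eigenvalue $1$ of multiplicity exactly one, so a fixed point of $g$ would force a positive-dimensional fixed locus, contradicting the isolatedness of the nine singularities (or, just as well, the standing assumption that $G$ acts faithfully on each factor); hence $G_x\subseteq H$ for all $x$, which is the same conclusion $H_x=G_x$. Your version is more self-contained, since it does not rely on knowing that $E^3/H\simeq X_{3,3}$ has exactly $27$ singular points, while the paper's version packages the freeness into the already-computed singularity data of Theorem \ref{rigsingkod0}. Both are sound, and both routes reduce the fundamental group computation to the simple connectedness of $X_{3,3}$ from Remark \ref{Beau}.
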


\begin{proof}
We assume that $X$ is a quotient of $E^3$ by $\mathbb Z_3^2$. The case where $X$ is a quotient of $E^3$ by $\He(3)$, or equivalently   $X$ is a quotient of $E^3/C_3$ by $\mathbb Z_3^2$ is handled exactly  in the same way. 

In  the  proof of  Proposition \ref{welcheCalabi} 2), we saw that up to an automorphism of $\mathbb Z_3^2$, the analytic representation is 
$\rho(a,b)=\diag\big(\zeta_3^{a},\zeta_3^{b}, \zeta_3^{2a+2b} \big)$.  The 
restriction  to the subgroup  $H:=\langle (1,1) \rangle \leq \mathbb Z_3^2$ is generated by $\zeta_3 \cdot \Id$, which  implies that $E^3/H$ is isomorphic to $X_{3,3}$, again using \cite[Corollary 13.3.5]{BL}.   
We show that the map $u$ fitting in the diagram 
\[
\begin{xy}
  \xymatrix{
     E^3 \ar[r]^{p_1} \ar[d]_{p_2} &  X \\
  E^3/H \ar[ru]_u & 
  }
\end{xy},
\]
 is a local biholomorphism. Then, since $u$ is proper, it is an unramified  cover. 
Clearly,  $u$ maps $\Sing(E^3/H)=\lbrace 27 \times \frac{1}{3}(1,1,1) \rbrace$ to $\Sing(X)=\lbrace 9 \times \frac{1}{3}(1,1,1) \rbrace$. More precisely,  
since $u$ has degree three,   the fibre of $u$ over each point $p \in X$ consists either of three singular or three smooth points, depending if $p$ is singular or not. 
 Take a point  $q \in E^3/H$ such that $u(q)=p$
and a point $x \in E^3$ such that $p_ 2(x) =q$, 
then  $p_1(x)=p$ and the diagram of local rings  commutes:  
\[
\begin{xy}
  \xymatrix{
     \mathcal O_{E^3,x}  &  \ar[l]_{p_1^{\ast}} \mathcal O_{X,p} \ar[ld]^{u^{\ast}} \\
     \mathcal O_{E^3/H,q} \ar[u]^{p_2^{\ast}} & 
  }
\end{xy},
\]

 By definition of the sheaf of holomorphic functions on a quotient,   the maps $p_1^{\ast}$ resp. $p_2^{\ast}$ are isomorphisms 
onto the subrings $\mathcal O_{E^3,x}^{G_x}$ resp. $\mathcal O_{E^3,x}^{H_x}$ of $\mathcal O_{E^3,x}$.  The inclusion   $\mathcal O_{E^3,x}^{G_x} \subset \mathcal O_{E^3,x}^{H_x}$
is an equality, since   the stabilizers  $H_x \subset G_x$ are equal.  Indeed  both groups  $H_x$ and $G_x$ are either  trivial or of  order $3$, depending if 
 $p$ and $q$ are smooth or singular. 
\end{proof}

\begin{rem}
Let $X$ be a quotient of $E^3$ by a rigid diagonal action of $\mathbb Z_3^2$ or $\He(3)$ uniformized by $X_{3,3}$. 
Then there is a crepant resolution 
$\rho \colon \widehat{X} \to X$ of singularities, such that $\widehat{X}$ is uniformized by the crepant resolution $\widehat{X}_{3,3}$ of Beauville's threefold.
\end{rem}

\begin{theorem}\label{rigsingkod0}
For each exceptional group $\mathbb Z_3^2$ and $\He(3)$ there are exactly 
four isomorphism classes of quotients 
\[
X_i:=E^3/\mathbb Z_3^2 \qquad \makebox{and} \qquad Y_i:=E^3/\He(3)
\]
 obtained by a rigid diagonal $G$-action. Some of the invariants are listed in the following table:
 \begin{center}
{\footnotesize
{
\renewcommand{\arraystretch}{1.5}
\setlength{\tabcolsep}{4pt}
\begin{tabular}{c  c  | c  c  c | c }
 & & $p_g$ & $b_3$ & $b_2$ & $\Sing$  \\
 \hline
\hline
$X_1$ & $Y_1$ & $0$  &  0 & $5$ & $9 \times \frac{1}{3}(1,1,1)$,  $9 \times \frac{1}{3}(1,1,2)$ \\
\hline
$X_2$ & $Y_2$ & $0$ & 2   & $3$& $9 \times \frac{1}{3}(1,1,2)$ \\
\hline
$X_3$ & $Y_3$ & $1$ & 2 & $3$ &  $9 \times \frac{1}{3}(1,1,1)$  \\
\hline
$X_4$ & $Y_4 $ & $1$  & 2 & $9$ &  $27 \times \frac{1}{3}(1,1,1)$ \\
\hline
\end{tabular}
}}
\end{center}
 The threefolds $X_4$ and $Y_4$ are isomorphic to Beauville's threefold. 
\end{theorem}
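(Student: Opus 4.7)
The plan is to reduce the classification to a finite combinatorial enumeration via Riemann's existence theorem, then distinguish classes by their Hodge-theoretic and local invariants computed from the analytic representation, and finally identify the Beauville example as the extremal case of Proposition~\ref{welcheCalabi}.

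First I would parameterise rigid diagonal actions. Since $G^{ab}$ is an elementary $3$-group, by Proposition~\ref{wallgrps} each factor $E_i$ must be the equianharmonic curve and each quotient map $E_i \to E_i/G \simeq \mathbb P^1$ has signature $[3,3,3]$. Thus by Proposition~\ref{RET} a diagonal $G$-action on $E^3$ is encoded by a triple $[V_1,V_2,V_3]$ of generating triples of $G$ of signature $[3,3,3]$. Rigidity becomes a combinatorial condition via Proposition~\ref{quadraticdiff} and Remark~\ref{canrep}(2): the canonical character $\chi_{E_i}$, read off from the linear parts of the elements of $V_i$ (Remark~\ref{RemRET}(2)), must satisfy $\chi_{E_i}\cdot \chi_{E_j} \neq \chi_{triv}$ for all $i,j$.

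Second, by \cite[Proposition~3.3]{BeauReal}, two such triples yield isomorphic quotients if and only if they lie in the same orbit of $\mathfrak{S}_3 \times \mathcal{B}_3^3 \times \Aut(G)$, permuting the $V_i$, performing Hurwitz moves on each $V_i$, and acting diagonally by group automorphisms. I would run a MAGMA routine that enumerates the (finite) set of rigid generating-triple triples and computes these orbits, verifying that in each of the two cases $G=\mathbb Z_3^2$ and $G=\He(3)$ there are exactly four orbits, thereby producing $X_1,\dots,X_4$ and $Y_1,\dots,Y_4$.

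Third, I would compute invariants for a representative of each orbit. The Hodge numbers follow as in the proof of Proposition~\ref{Hodgesmooth}: $H^{p,q}(E^3)^G$ is the invariant part of $\Lambda^p\Omega\otimes\Lambda^q\overline\Omega$ under the analytic representation $\rho$, which is a diagonal sum of three non-trivial characters $\chi_1,\chi_2,\chi_3$ of the cyclic quotient $G/T_i\simeq \mathbb Z_3$. This yields $b_2=h^{1,1}$ and, together with Remark~\ref{InvS}, also $p_g=h^{3,0}$ and $b_3=2p_g+2h^{2,1}$. The singularities are determined by the stabilizers of points of $E^3$: via Remark~\ref{RemRET}(2) each non-trivial stabilizer is cyclic of order~$3$, and its type is $\tfrac13(1,1,1)$ or $\tfrac13(1,1,2)$ according to whether the generator acts on the tangent space as $\zeta_3\cdot\mathrm{Id}$ or as $\operatorname{diag}(\zeta_3,\zeta_3,\zeta_3^2)$. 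With the numerical data assembled, Proposition~\ref{welcheCalabi} forces the four possibilities listed in the table, and Lemma~\ref{Euler} gives a consistency cross-check. Finally, $X_4$ and $Y_4$ are precisely the case $N_{gor}=27$ singled out in the proof of Proposition~\ref{welcheCalabi}(2), where the analytic representation collapses to $\zeta_3\cdot \mathrm{Id}$ and \cite[Corollary~13.3.5]{BL} identifies the quotient with $X_{3,3}$.

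The only substantive obstacle is the orbit enumeration in the second step: showing exactly four orbits (and no coincidences between orbits that would collapse the list) is not amenable to a clean theoretical argument and is precisely what the MAGMA algorithm is designed to handle. Everything else is a direct application of the character and Hodge-number calculations already developed in Section~5 together with the structural results in Propositions~\ref{welcheCalabi} and~\ref{BeauvilleUnifor}.
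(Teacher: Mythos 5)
Your proposal follows essentially the same route as the paper: encode rigid diagonal actions by triples of generating triples via Riemann's existence theorem, compute the orbits of $\mathfrak S_3 \times \mathcal B_3^3 \times \Aut(G)$ by a MAGMA enumeration to get four classes per group, and then compute the invariants via the analytic representation, Remark~\ref{InvS}, Lemma~\ref{Euler} and Proposition~\ref{welcheCalabi} (which also identifies $X_4, Y_4$ with $X_{3,3}$). The only small caution is that \cite[Proposition~3.3]{BeauReal} gives only that equivalent triples yield isomorphic quotients, not the converse; but since the invariants in the table already distinguish the four orbits, the count of isomorphism classes is exactly four, as in the paper.
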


\begin{proof}
In analogy to the proof of Theorem \ref{Z1Z2Glatt} we consider for each group 
 $G=\mathbb Z_3^2$ resp. $\He(3)$ the set of triples of  generating triples
$[V_1,V_2,V_3]$ 
which correspond to a diagonal rigid group action on $E^3$. 
Then we determine the orbits of the action of  $\mathfrak S_3 \times \mathcal B_3^3 \times \Aut(G)$ on this set. We obtain 
$4$ orbits for each group.
The invariants are then computed as explained in Remark \ref{InvS} and Lemma \ref{Euler}.
\end{proof}
\begin{rem}
1) In the sequel we shall investigate the relations among the threefolds in the table. 
By  looking at the Betti numbers, it is obvious  that $X_i$ is not homeomorphic to   $X_j$ or $Y_j$ for $j\neq i$, except for $i=2$ and $j=3$ or vice versa. 

2)  
Using as in the proof of Theorem  \ref{Z1Z2Glatt} the identification $\He(3)/C_3 \simeq \mathbb Z_3^2$, it can be checked again by a MAGMA routine that the image of the triple $[W_{i,1}, \ldots, W_{i,3}]$ representing $Y_i$ is a triple $[V_{i,1}, \ldots,V_{i,3}]$ which lies in the orbit representing $X_i$. 

Therefore  there are ramified  Galois covers $f_i \colon Y_i \to X_i$ with group $\mathbb Z_3^2$, thanks to Proposition \ref{coverf}.

\end{rem}

\begin{proposition}\label{diffeo}
The threefolds $X_2$ and $X_3$ are diffeomorphic in the orbifold sense. 
Likewise $Y_2$ and $Y_3$ are also diffeomorphic. 
\end{proposition}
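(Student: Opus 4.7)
The plan is to use Bieberbach's second theorem (Theorem \ref{biberer}~(2)) to reduce the orbifold diffeomorphism statement to an abstract isomorphism of orbifold fundamental groups, which I will then construct via a complex-conjugation trick on one coordinate. The key ingredient is that complex conjugation $z\mapsto\bar z$ is a well-defined real-analytic diffeomorphism of the Fermat elliptic curve $E=\mathbb C/\mathbb Z[\zeta_3]$, since $\overline{\zeta_3}=\zeta_3^2\in\mathbb Z[\zeta_3]$ shows the lattice is conjugation-invariant.

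Both $X_2$ and $X_3$ are flat orbifolds: writing $\Lambda=\mathbb Z[\zeta_3]^3$, the orbifold fundamental group $\Gamma_i=\pi_1^{orb}(X_i)$ is a crystallographic group in $\mathbb E(6)$ fitting in an extension
\[
1 \longrightarrow \Lambda \longrightarrow \Gamma_i \longrightarrow \mathbb Z_3^2 \longrightarrow 1
\]
with $\mathbb Z_3^2$ acting on $\Lambda$ via the analytic representation $\rho_i$. Since the rigid action has fixed points on $E^3$ (Proposition \ref{norigid}), taking such a point as the origin of the universal cover $\mathbb C^3$ linearizes the $\mathbb Z_3^2$-action, so the extension splits and $\Gamma_i\cong\Lambda\rtimes_{\rho_i}\mathbb Z_3^2$. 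The argument in Case~2 of the proof of Proposition \ref{welcheCalabi} already identifies, up to an automorphism of $\mathbb Z_3^2$, $\rho_3(a,b)=\diag(\zeta_3^a,\zeta_3^b,\zeta_3^{2a+2b})$, its defining property being $\chi_1\chi_2\chi_3=\chi_{triv}$, i.e.\ $p_g(X_3)=1$. A parallel analysis, now imposing $\chi_1\chi_2\chi_3\neq\chi_{triv}$ (so $p_g(X_2)=0$) together with the rigidity constraints $\chi_i\chi_j\neq\chi_{triv}$, forces $\rho_2(a,b)=\diag(\zeta_3^a,\zeta_3^b,\zeta_3^{a+b})$ up to an automorphism.

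The two representations agree on the first two factors and are complex conjugate on the third, because $\overline{\zeta_3^{2a+2b}}=\zeta_3^{a+b}$. Consequently, the $\mathbb Z$-linear map $\Phi\colon\Lambda\to\Lambda$, $(x_1,x_2,x_3)\mapsto(x_1,x_2,\bar x_3)$, is a well-defined automorphism of $\Lambda$ that intertwines $\rho_3$ and $\rho_2$, and therefore induces an abstract isomorphism of the semidirect products $\Gamma_3\cong\Gamma_2$. Applying Bieberbach's second theorem, there is an affine transformation $\alpha\in\Aff(\mathbb R^6)$ with $\alpha\Gamma_3\alpha^{-1}=\Gamma_2$, which descends to an affine orbifold diffeomorphism $X_3\xrightarrow{\sim}X_2$.

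The argument for $Y_2\cong Y_3$ runs in parallel: the analytic representations of the two rigid $\He(3)$-actions factor through the quotient $\He(3)/C_3\cong\mathbb Z_3^2$ (because the center $C_3$ acts on $E^3$ by translations, cf.\ Remark \ref{structureHeis}~(1)), so the same conjugation intertwiner $\Phi$ applies. I expect the main technical obstacle to be verifying that $\Phi$ extends to an automorphism of the enlarged translation lattice $\tilde\Lambda_i=\Gamma_i\cap\mathbb R^6$ arising in the $\He(3)$-case, which now contains the lifts of the $C_3$-translations; this should reduce to a direct check on the explicit generators produced in the spirit of Example \ref{explicitex}, after which Bieberbach's theorem again yields the desired affine orbifold diffeomorphism.
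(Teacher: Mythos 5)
Your core idea --- that $X_2$ and $X_3$ differ by complex conjugation in the third coordinate, which is an isometry of $E=\CC/\ZZ[\zeta_3]$ because the Eisenstein lattice is conjugation-invariant --- is exactly the idea of the paper's proof. But the reduction you use to implement it has a genuine gap: you claim that because the action has fixed points (Proposition \ref{norigid}), the extension $1\to\Lambda\to\Gamma_i\to\ZZ_3^2\to1$ splits, so that $\Gamma_i\simeq\Lambda\rtimes_{\rho_i}\ZZ_3^2$ and an intertwiner of the linear representations already gives an isomorphism of the groups. This is false. Non-freeness only says that \emph{some} nontrivial element has a fixed point; splitting of the extension would require a point of $E^3$ fixed by the \emph{whole} group $\ZZ_3^2$ (a section of $\Gamma_i\to\ZZ_3^2$ is a finite subgroup of $\Aff(\RR^6)$ and hence fixes the barycenter of an orbit). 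Since the stabilizer of any point of $E^3$ under a diagonal action is an intersection of cyclic groups and hence cyclic, while $\ZZ_3^2$ is not cyclic, no such point exists; equivalently, a global fixed point would produce a non-cyclic quotient singularity, contradicting the fact that $\Sing(X_i)$ consists only of points of type $\frac{1}{3}(1,1,1)$ and $\frac{1}{3}(1,1,2)$. So $\Gamma_2$ and $\Gamma_3$ are non-split extensions, and matching the linear parts alone does not produce an isomorphism: crystallographic groups with the same point group and the same integral holonomy representation can still be non-isomorphic, the difference being encoded in the translation parts (the extension class). Indeed, distinguishing groups with identical analytic representations by their translation data is precisely the delicate point throughout Section 5.

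The repair is to do what the paper does: exhibit explicit affine models, e.g.
$\psi_3(a,b)(z)=\diag(\zeta_3^a,\zeta_3^b,\zeta_3^{2a+2b})z+\tfrac{1+2\zeta_3}{3}(b,a,a)$ for $X_3$, and verify that conjugating the third coordinate carries the \emph{entire} affine transformation, translation part included, to an action whose quotient is $X_2$; here one uses $\overline{\tfrac{1+2\zeta_3}{3}a}=\tfrac{1+2\zeta_3^2}{3}a$, and one must then identify the resulting quotient as $X_2$ (rather than some other class) by checking rigidity, faithfulness on each factor, and the invariants $p_g=0$, $b_3=2$, $b_2=3$ against the classification of Theorem \ref{rigsingkod0}. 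Once the map $F(z_1,z_2,z_3)=(z_1,z_2,\overline{z}_3)$ is shown to conjugate one group of affine transformations to the other, it descends directly to the desired diffeomorphism; no appeal to Bieberbach's second theorem is needed at this stage (that theorem is used in the paper only in the opposite direction, to show certain quotients are \emph{not} homeomorphic). Your remarks on the $\He(3)$ case inherit the same problem and are fixed the same way.
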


\begin{rem}
By Proposition \ref{BeauvilleUnifor} it follows that 
$\pi_1(X_2)$ and $\pi_1(Y_2)$ are isomorphic to $\mathbb Z_3$. 
Therefore we know all fundamental groups of the threefolds $X_i, Y_i$ in the above table, except for $i = 1$. It can be shown using Armstrong's result (cf. \cite{armstrong}) that they are simply connected.
\end{rem}

\begin{proof}[Proof of Proposition \ref{diffeo}]
We may realize $X_3$ as the quotient of $E^3$ by the $\mathbb Z_3^2$-action:
\[
\psi_3(a,b)(z)=\diag(\zeta_3^a,\zeta_3^b,\zeta_3^{2a+2b})z+ \frac{1+2\zeta_3}{3}(b,a,a). 
\]
Indeed the action is faithful on each factor and, according to the argument in the proof of Proposition \ref{welcheCalabi}, Case 2, the quotient has 
$9$ singularities of type $\frac{1}{3}(1,1,1)$. 

Now we modify this action by replacing  the third component of  $\psi_3$ with the complex conjugate:
\[
\psi_2(a,b)(z)=\diag(\zeta_3^a,\zeta_3^b,\zeta_3^{a+b})z+ \bigg( \frac{1+2\zeta_3}{3}b, \frac{1+2\zeta_3}{3}a, \frac{1+2\zeta_3^2}{3}a\bigg). 
\]
The action is  still faithful on each factor, rigid and the invariants of the quotient are $p_g=0$, $b_3=2$ and $b_2=3$. 
Whence the quotient with respect to this action is $X_2$.  By construction, the diffeomorphism 
\[
F\colon E^3 \to E^3, \qquad (z_1,z_2,z_3) \mapsto (z_1,z_2,\overline{z}_3) 
\]
descends to the quotients $\widehat{F} \colon X_3 \to X_2$. An affine diffeomorphism between $Y_3$ and $Y_2$ is established in the same way. 
\end{proof}

\bigskip
The rest of the section is devoted to show the following:

\begin{theorem}\label{nothomeo}
The threefolds in the table consist of five distinct  topological types: 
\[
X_1, \quad  Y_1, \quad  X_2 \simeq_{\makebox{\tiny{diff}}} X_3, \quad  Y_2\simeq_{\makebox{\tiny{diff}}} Y_3 \quad \makebox{and} \quad X_4\simeq_{\makebox{\tiny{bihol}}} Y_4
\]
\end{theorem}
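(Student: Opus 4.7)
The strategy combines the Hodge-theoretic invariants of Theorem \ref{rigsingkod0} with the orbifold fundamental group, regarded as a crystallographic group via the Bieberbach theorems of Theorem \ref{biberer}.

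The pairs $(b_2, b_3)$ take the values $(5,0)$ on $\{X_1, Y_1\}$, $(3,2)$ on $\{X_2, X_3, Y_2, Y_3\}$ and $(9,2)$ on $\{X_4, Y_4\}$, so the eight threefolds split into three pairwise non-homeomorphic clusters. Theorem \ref{rigsingkod0}(i) provides the biholomorphism $X_4 \simeq Y_4 \simeq X_{3,3}$, and Proposition \ref{diffeo} provides the diffeomorphisms $X_2 \simeq_{\text{diff}} X_3$ and $Y_2 \simeq_{\text{diff}} Y_3$. This reduces the problem to ruling out a homeomorphism $X_1 \simeq Y_1$ and one $X_2 \simeq Y_2$ (equivalently $X_3 \simeq Y_3$). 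Within each of these pairs the ordinary fundamental group already coincides: $X_1$ and $Y_1$ are simply connected by Armstrong's theorem, and $X_j, Y_j$ for $j \in \{2,3\}$ both have $\pi_1 \cong \mathbb{Z}_3$ by Proposition \ref{BeauvilleUnifor}. I would therefore pass to the orbifold fundamental group.

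For our threefolds, whose singularities are isolated cyclic quotient singularities of types $\frac{1}{3}(1,1,1)$ and $\frac{1}{3}(1,1,2)$, the orbifold fundamental group is a topological invariant: any homeomorphism between two such threefolds matches the singular strata (characterised topologically as the non-manifold points) and at each singular point identifies the local isotropy group (recovered from the fundamental group of the link), and hence lifts to a homeomorphism of the orbifold universal covers which induces an isomorphism of orbifold fundamental groups. For $Z = E^3 / G$ with $G \in \{\mathbb{Z}_3^2, \He(3)\}$ this orbifold fundamental group $\Gamma_Z \leq \mathbb{E}(6) = \mathbb{R}^6 \rtimes \OO(6)$ is, as in Section 5.1, the crystallographic group generated by $\Lambda = \mathbb{Z}[\zeta_3]^3$ together with affine lifts to $\mathbb{C}^3$ of the $G$-action prescribed by the generating triples of Theorem \ref{rigsingkod0}. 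By Bieberbach's second theorem, any isomorphism $\Gamma_{X_i} \cong \Gamma_{Y_i}$ is realised by affine conjugation in $\Aff(\mathbb{R}^6)$ and therefore carries the translation lattice of one (the unique maximal normal abelian subgroup, by Bieberbach's first theorem) together with its integral holonomy representation onto those of the other.

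To conclude I would read off from the generating triples the explicit affine $G$-action on $\mathbb{C}^3$ (in the style of Example \ref{explicitex}), write down the crystallographic groups $\Gamma_{X_i}, \Gamma_{Y_i}$ for $i \in \{1, 2\}$, and compare the resulting pairs (translation lattice, holonomy representation). The principal obstacle is that the center $C_3 \leq \He(3)$ always acts by translations (Remark \ref{structureHeis}), forcing the holonomy of $\Gamma_{Y_i}$ to be an abelian subquotient of $\mathbb{Z}_3^2$ just like that of $\Gamma_{X_i}$, so that holonomy alone fails to separate them. The distinction must therefore be located in the finer integral data: the translation lattice of $\Gamma_{Y_i}$ strictly enlarges $\Lambda$ by the $C_3$-translations, and the induced holonomy representation on this enlarged lattice, equivalently the extension class in $H^2(H, T)$, differs from that of $\Gamma_{X_i}$. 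As in the proofs of Theorems \ref{Z1Z2Glatt} and \ref{rigsingkod0}, a MAGMA routine applied to the generating triples is the natural way to carry out this comparison and to certify the five topological types.
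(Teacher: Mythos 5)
Your proposal follows essentially the same route as the paper: reduce via the Betti numbers of Theorem \ref{rigsingkod0}, quote Theorem \ref{rigsingkod0} for $X_4\simeq Y_4$ and Proposition \ref{diffeo} for $X_2\simeq_{\text{diff}}X_3$, $Y_2\simeq_{\text{diff}}Y_3$, observe that a homeomorphism must match singular with singular points (local fundamental groups of the links) and hence induces an isomorphism of orbifold fundamental groups, and then invoke Bieberbach to turn a putative isomorphism $\Gamma_{X_i}\cong\Gamma_{Y_i}$ into an affine conjugation. You also correctly locate the obstruction: the holonomy representations coincide, so the distinction must sit in the lattice data, namely in the extra $C_3$-translations that enlarge the translation lattice of $\Gamma_{Y_i}$.

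The one place where you stop short is the decisive final step. You propose to certify $\Gamma_{X_i}\not\cong\Gamma_{Y_i}$ by a MAGMA comparison of the integral data, whereas the paper closes this by hand, and the argument is short enough that deferring it to a computation leaves a real gap in an otherwise complete proof. Concretely: since $\rho_{\mathbb R}$ splits into irreducible two-dimensional real summands, Schur's lemma (together with Proposition \ref{comMatrices}) forces the linear part $A$ of the conjugating affine map to be block-diagonal with each block $\CC$-linear or $\CC$-antilinear, i.e.\ acting on each coordinate by $z\mapsto w z$ or $z\mapsto w\bar z$; the condition $A\cdot\Lambda_{Y_i}=\Lambda_{X_i}=\ZZ[\zeta_3]^3$ applied to the standard basis vectors in both directions forces each $w$ to be a unit of $\ZZ[\zeta_3]$; and then $A$ cannot map the extra lattice vector of the form $\frac{1+2\zeta_3}{3}(\ast,\ast,\ast)$ into $\ZZ[\zeta_3]^3$, since $w\cdot\frac{1+2\zeta_3}{3}\notin\ZZ[\zeta_3]$ for any unit $w$. (The paper does note, in a remark, that counting index-three normal subgroups of the $\Gamma$'s with MAGMA gives an independent verification, so your computational fallback is legitimate --- but as stated your proof is a program, not an argument, at exactly the point where something must be proved.)
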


To prove the theorem it remains to show that the threefolds  $X_i$ and $Y_i$ are not homeomorphic for $1 \leq i \leq 3$. 
Albeit we cannot use 
the  fundamental  groups to distinguish them, our  argument  is still 
analogous to the one that we gave in the previous section i.e., based on Bieberbach's theorems. As a substitute for the fundamental group, we use: 

\begin{definition}
Let $T=\mathbb C^n/\Lambda$ be a complex torus and $G$ be a finite group of automorphisms acting on $T$ without 
translations. Let  $\pi \colon \mathbb C^n \to T$ be the universal cover, then we define the \emph{orbifold fundamental group} as
\[
\Gamma:=\lbrace \gamma \colon  \mathbb C^n  \to \mathbb C^n  ~ \big\vert ~ \exists ~ g \in G, ~~ s.t. ~ \pi \circ \gamma = g \circ \pi \rbrace. 
\] 
\end{definition}

\begin{rem}\label{orbipi1} 
 1) By definition $\Gamma$ is  a  cocompact discrete subgroup of the group of affine transformations. The subgroup of translations  of $\Gamma$ is the  lattice $\Lambda$. 
If $G$ acts freely in codimension at least two, then  $\Gamma$ is isomorphic  to the fundamental group of the smooth locus of $T/G$. 

2) We point out that $\Gamma_{X_i}$ and $\Gamma_{Y_i}$ can be described in terms of the triples 
$[V_{i,1}, \ldots,V_{i,3}]$ and  $[W_{i,1}, \ldots, W_{i,3}]$ of generating triples, which correspond to $X_i$ and $Y_i$ (cf. \cite[Section 3]{FourNames}):  let $\mathbb T$ be the triangle group $\mathbb T(3,3,3)$ 
and consider the  homomorphisms
\[
\phi_{V_{i,j}} \colon \mathbb T  \to \mathbb Z_3^2 \qquad \makebox{and} \qquad 
\phi_{W_{i,j}} \colon \mathbb T \to  \He(3). 
\]
Then the groups $\Gamma_{X_i}$ and $\Gamma_{Y_i}$  are isomorphic to the  fibred products:
\begin{align*}
\Gamma_{X_i}& \simeq  \lbrace  t \in  \mathbb T^3 ~ \big\vert ~ \phi_{V_{i,1}}(t_1)= \ldots = \phi_{V_{i,3}}(t_3) \rbrace, \\
\Gamma_{Y_i} & \simeq \lbrace  t \in  \mathbb T^3 ~ \big\vert ~ \phi_{W_{i,1}}(t_1)= \ldots = \phi_{W_{i,3}}(t_3) \rbrace.
\end{align*}

\end{rem}

\begin{lemma}
The groups $\Gamma_{X_i}$ and $\Gamma_{Y_i}$ are not isomorphic for $1\leq i \leq 3$. 
\end{lemma}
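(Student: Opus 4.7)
The plan is to use Bieberbach's first theorem as a rigidity tool: the translation subgroup of a crystallographic group is characterized purely in terms of the abstract group structure, and therefore so is its cofinite quotient (the point group / holonomy). Since the point groups for $X_i$ and $Y_i$ are $\mathbb Z_3^2$ and $\He(3)$, respectively, and these have different orders, no abstract group isomorphism can exist.

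More concretely, I would first check that $\Gamma_{X_i}$ and $\Gamma_{Y_i}$ really are crystallographic subgroups of $\mathbb{E}(6)$. This follows because the analytic representations $\rho$ of $\mathbb Z_3^2$ and $\He(3)$ that appear (as in the proof of Proposition \ref{welcheCalabi}) are diagonal with roots of unity as entries, so their realifications land in $\OO(6)$; moreover $\Lambda_{X_i}$ and $\Lambda_{Y_i}$ are the lattices defining the respective complex tori $E^3$, so $\Gamma_{X_i}$ and $\Gamma_{Y_i}$ are discrete and cocompact, and their translation subgroups are exactly these rank-$6$ lattices. Since the actions of $\mathbb Z_3^2$ and $\He(3)$ on $E^3$ contain no non-trivial translations, one has the short exact sequences
\[
1 \to \Lambda_{X_i} \to \Gamma_{X_i} \to \mathbb Z_3^2 \to 1, \qquad 1 \to \Lambda_{Y_i} \to \Gamma_{Y_i} \to \He(3) \to 1.
\]

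Next, the key step: by Bieberbach's first theorem (Theorem \ref{biberer} (1)), the translation subgroup $\Lambda$ of a crystallographic group $\Gamma \leq \mathbb{E}(n)$ is the unique maximal normal abelian subgroup of $\Gamma$. In particular, $\Lambda$ is an invariant of the abstract group $\Gamma$, and so is the quotient $\Gamma/\Lambda$. Consequently, any abstract isomorphism $f\colon\Gamma_{X_i}\to\Gamma_{Y_i}$ would have to satisfy $f(\Lambda_{X_i})=\Lambda_{Y_i}$, inducing an isomorphism of the point groups
\[
\mathbb Z_3^2 \;\simeq\; \Gamma_{X_i}/\Lambda_{X_i} \;\simeq\; \Gamma_{Y_i}/\Lambda_{Y_i} \;\simeq\; \He(3).
\]
This is impossible on cardinality grounds alone, since $|\mathbb Z_3^2|=9\neq 27=|\He(3)|$, yielding the claim.

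There is no serious obstacle here; the only subtlety I anticipate is the verification that the translation subgroup of $\Gamma_{X_i}$ (resp.\ $\Gamma_{Y_i}$) really agrees with the torus lattice $\Lambda_{X_i}$ (resp.\ $\Lambda_{Y_i}$), which amounts to observing that no non-identity element of the point group acts as a translation on $T$ — this is precisely the hypothesis built into the definition of the orbifold fundamental group. Everything else is a clean application of Bieberbach's first theorem combined with an order comparison.
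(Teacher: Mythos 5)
Your approach breaks down at exactly the point you flagged as the ``only subtlety'': the claim that the translation subgroup of $\Gamma_{Y_i}$ is the lattice of $E^3$ and that the point group is $\He(3)$. It is not. As recorded in Remark \ref{structureHeis}~(1), the center $C_3 = Z\big(\He(3)\big)$ is contained in all four index-three normal subgroups $A_j$, hence acts by translations on \emph{every} factor of $E^3$ for any rigid diagonal action. So the $\He(3)$-action on the torus $E^3$ does contain non-trivial translations, and the hypothesis in the paper's definition of the orbifold fundamental group forces you to pass to the torus $T' = E^3/C_3$ with the induced action of $\He(3)/C_3 \simeq \mathbb Z_3^2$. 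Equivalently: inside the group of all affine lifts, the preimage of $C_3$ is a normal abelian subgroup consisting of translations and strictly containing $\mathbb Z[\zeta_3]^3$; by Bieberbach's first theorem (Theorem \ref{biberer}~(1)) the translation lattice $\Lambda_{Y_i} = \Gamma_{Y_i} \cap \mathbb R^6$ is therefore the rank-six lattice of $T'$, and the holonomy quotient is
\[
\Gamma_{Y_i}/\Lambda_{Y_i} \;\simeq\; \He(3)/C_3 \;\simeq\; \mathbb Z_3^2,
\]
not $\He(3)$. Both $\Gamma_{X_i}$ and $\Gamma_{Y_i}$ have point group $\mathbb Z_3^2$, so the order comparison $9 \neq 27$ proves nothing. (This is consistent with the rest of the paper: $Y_i \to X_i$ is a $\mathbb Z_3^2$-cover, and the proof of Proposition \ref{welcheCalabi} deliberately treats both $X_i$ and $Y_i$ as $\mathbb Z_3^2$-quotients of three-dimensional tori.)

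The actual proof has to work harder precisely because the holonomy groups coincide. The paper invokes Bieberbach's \emph{second} theorem to reduce a hypothetical isomorphism $\Gamma_{X_i} \simeq \Gamma_{Y_i}$ to conjugation by an affine map $x \mapsto Ax + b$; since both crystallographic groups have the \emph{same} real analytic representation $\rho_{\mathbb R}$ of $\mathbb Z_3^2$ (a sum of two-dimensional irreducible real representations), Schur's lemma forces $A$ to be block-structured with $\CC$-linear or $\CC$-antilinear blocks, and the contradiction then comes from the arithmetic of the two distinct translation lattices ($\mathbb Z[\zeta_3]^3$ versus the index-three overlattice coming from $E^3/C_3$), exactly as in the proof of Theorem \ref{Z1notZ2}. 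Some lattice-theoretic input of this kind is unavoidable: the extension data, not the point group, is what distinguishes $\Gamma_{X_i}$ from $\Gamma_{Y_i}$.
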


\begin{proof}
We only treat the case $i=1$. The strategy in the other cases is the same, even easier. 

Assume that $\Gamma_{X_1}$ and $\Gamma_{Y_1}$ are isomorphic then,   by Bieberbach's second theorem \ref{biberer},   there exists an affine transformation
\[
\alpha \colon \mathbb R^6 \to \mathbb R^6, \quad x \mapsto Ax+b, 
\]
such that $\alpha \circ \Gamma_{X_1} \circ \alpha^{-1}=  \Gamma_{Y_1}$. As explained above, the analytic representations 
of the $\mathbb Z_3^2$ actions giving the quotients $X_1$ and $Y_1$ coincide. We view this representation $\rho=\rho_1$ as a real representation in the orthogonal group of $6\times 6$ matrices:
\[
 \rho_{\mathbb R}(a,b) := 
\begin{pmatrix} B^{a+b}  &  0 & 0   \\ 0 & B^{a+b} & 0  \\ 0 & 0 & B^b 
\end{pmatrix},
\quad B=-\frac{1}{2}
\begin{pmatrix} 1 &  \sqrt{3}  \\ -\sqrt{3} &1 \end{pmatrix}. 
\]
In analogy to Remark \ref{diffeovarphi} (2), there exists 
an  automorphism $\varphi \in \Aut(\mathbb Z_3^2)$, such that
$$
A \rho_{\mathbb R}(a,b) A^{-1} =\rho_{\mathbb R}\big(\varphi(a,b)\big), \quad  \ \forall \  (a,b) \in \mathbb Z_3^2. 
$$
The representation $\rho_{\mathbb R}$ consists of  two copies of  $B^{a+b} $ and one copy of $B^{b}$. Thus $\rho_{\mathbb R} \circ \varphi$ is also the sum 
of two  copies of an irreducible two-dimensional real representation and another (distinct)  irreducible two dimensional real representation.  
Schur's Lemma implies that $A$ is a block matrix: 
\[
A= 
\begin{pmatrix} A_1 &  0   \\ 0 & A_2  
\end{pmatrix}, \quad \makebox{where} \quad A_1 \in \GL(4,\mathbb R) \quad \makebox{and} \quad A_2 \in \GL(2,\mathbb R).
\]
As in the proof of Theorem \ref{Z1notZ2} we conclude that  $A_2$ has to be $\CC$-linear or $\CC$-antilinear and we obtain a contradiction for the same reasoning. 
\end{proof}

\begin{rem}
We can use the description of the groups $\Gamma_{X_i}$ and $\Gamma_{Y_i}$ from Remark \ref{orbipi1} (2) to compute,  with the help of MAGMA,  
 the number of their index three  normal subgroups: 
 \begin{center}
{\footnotesize
\begin{tabular}{c | c | c | c | c| c| c }
  & $\Gamma_{X_1}$  & $\Gamma_{Y_1}$ & $\Gamma_{X_2}$ & $\Gamma_{Y_2}$ & $\Gamma_{X_3}$ & $\Gamma_{Y_3}$ \\
\hline
\# subgrps  & $41$  & $14$ & $41$ &  $5$ & $41$ &  $5$ \\
\end{tabular}
}
\end{center}
This is provides another argument  that the groups  $\Gamma_{X_i}$ and $\Gamma_{Y_i}$ cannot be isomorphic for $1 \leq i \leq 3$. 
Note that Proposition \ref{diffeo} implies $\Gamma_{X_2} \simeq \Gamma_{X_3}$ and  $\Gamma_{Y_2}\simeq \Gamma_{Y_3}$, a fact that can be also verified 
using the MAGMA command {\tt SearchForIsomorphism}. 
\end{rem}

\begin{proof}[Proof of Theorem \ref{nothomeo}]
It remains to show that the threefolds  $X_i$ and $Y_i$ are not homeomorphic for $1 \leq i \leq 3$. 
Assume the converse. We claim that a 
homeomorphism $f_i \colon X_i \to Y_i$ maps smooth to smooth and 
 singular to  singular points. In particular, it restricts to a homeomorphism between the regular  loci $f_i \colon  X_i^{\circ} \to Y_i^{\circ}$ and therefore  induces  an isomorphism between 
 $\Gamma_{X_i}$ and $\Gamma_{Y_i}$, see Remark \ref{orbipi1} (1).  A contradiction.  
 To verify  the claim,  we point out that the local fundamental group of a singularity of type $\frac{1}{3}(1,1,1)$ or $\frac{1}{3}(1,1,2)$ is isomorphic to 
 $\mathbb Z_3$, while the local fundamental group of a smooth point is  trivial. 
We conclude the proof,  because $f$ induces an isomorphism between 
$\pi_1^{loc}(X_i,p)$ and  $\pi_1^{loc}\big(Y_i,f_i(p)\big)$,  
for all $p \in X_i$. 
\end{proof}

\bigskip
\bigskip

\begin{biblist}

\bib{armstrong}{article}{
    AUTHOR = {Armstrong, M. A.},
     TITLE = {The fundamental group of the orbit space of a discontinuous
              group},
   JOURNAL = {Proc. Cambridge Philos. Soc.},
  FJOURNAL = {Proceedings of the Cambridge Philosophical Society},
    VOLUME = {64},
      YEAR = {1968},
     PAGES = {299--301},
      ISSN = {0008-1981},
   MRCLASS = {54.80},
  MRNUMBER = {221488},
MRREVIEWER = {R. W. Bagley},
       DOI = {10.1017/s0305004100042845},
       URL = {https://doi.org/10.1017/s0305004100042845},
}
\bib{rigidity}{article}{
   author={Bauer, Ingrid},
   author={Catanese, Fabrizio},
   title={On rigid compact complex surfaces and manifolds},
   journal={Adv. Math.},
   volume={333},
   date={2018},
   pages={620--669},
   issn={0001-8708},
   review={\MR{3818088}},
   doi={10.1016/j.aim.2018.05.041},
}
\bib{BeauReal}{article}{
     author={Bauer, Ingrid},
     author={Catanese, Fabrizio},
     author={Grunewald, Fritz},
       title={Beauville surfaces without real structures},
 booktitle= {Geometric methods in algebra and number theory},
    series = {Progr. Math.},
    volume = {235},
    pages = {1--42},
      date = {2005},
  review={\MR{2159375}},
}
\bib{FourNames}{article}{
     author={Bauer, Ingrid},
     author={Catanese, Fabrizio},
     author={Grunewald, Fritz},
      author={Pignatelli, Roberto},    
     TITLE = {Quotients of products of curves, new surfaces with {$p_g=0$}
              and their fundamental groups},
   JOURNAL = {Amer. J. Math.},
  FJOURNAL = {American Journal of Mathematics},
    VOLUME = {134},
      YEAR = {2012},
    NUMBER = {4},
     PAGES = {993--1049},
      ISSN = {0002-9327},
   MRCLASS = {14J29 (14J10)},
  MRNUMBER = {2956256},
MRREVIEWER = {Christian Liedtke},
       DOI = {10.1353/ajm.2012.0029},
       URL = {https://doi.org/10.1353/ajm.2012.0029},
}
\bib{BG}{article}{
    AUTHOR = {Bauer, Ingrid},
    author={Gleissner, Christian},
     TITLE = {Fermat's cubic, {K}lein's quartic and rigid complex manifolds
              of {K}odaira dimension one},
   JOURNAL = {Doc. Math.},
  FJOURNAL = {Documenta Mathematica},
    VOLUME = {25},
      YEAR = {2020},
     PAGES = {1241--1262},
      ISSN = {1431-0635},
   MRCLASS = {14B12 (14D06 14M25 32G05 32G07)},
  MRNUMBER = {4164723},
       DOI = {10.3934/dcdsb.2019218},
       URL = {https://doi.org/10.3934/dcdsb.2019218},
}
\bib{notinfinitesimally}{article}{
   author={Bauer, Ingrid},
   author={Pignatelli, Roberto},
    title={Rigid but not infinitesimally rigid compact complex manifolds},
   eprint={arXiv:1805.02559 [math.AG]},
   date={2018},
    pages={18}
}
\bib{beauville}{article}{
   author={Beauville, Arnaud},
   title={Some remarks on K\"ahler manifolds with $c_{1}=0$},
   conference={
      title={Classification of algebraic and analytic manifolds},
      address={Katata},
      date={1982},
   },
   book={
      series={Progr. Math.},
      volume={39},
      publisher={Birkh\"auser Boston, Boston, MA},
   },
   date={1983},
   pages={1--26},
   review={\MR{728605}},
   doi={10.1007/BF02592068},
}
\bib{bib1}{article}{
    AUTHOR = {Bieberbach, Ludwig},
     TITLE = {\"{U}ber die {B}ewegungsgruppen der {E}uklidischen {R}\"{a}ume},
   JOURNAL = {Math. Ann.},
  FJOURNAL = {Mathematische Annalen},
    VOLUME = {70},
      YEAR = {1911},
    NUMBER = {3},
     PAGES = {297--336},
      ISSN = {0025-5831},
   MRCLASS = {DML},
  MRNUMBER = {1511623},
       DOI = {10.1007/BF01564500},
       URL = {https://doi.org/10.1007/BF01564500},
}
\bib{bib2}{article}{
    AUTHOR = {Bieberbach, Ludwig},
     TITLE = {\"{U}ber die {B}ewegungsgruppen der {E}uklidischen {R}\"{a}ume
              ({Z}weite {A}bhandlung.) {D}ie {G}ruppen mit einem endlichen
              {F}undamentalbereich},
   JOURNAL = {Math. Ann.},
  FJOURNAL = {Mathematische Annalen},
    VOLUME = {72},
      YEAR = {1912},
    NUMBER = {3},
     PAGES = {400--412},
      ISSN = {0025-5831},
   MRCLASS = {DML},
  MRNUMBER = {1511704},
       DOI = {10.1007/BF01456724},
       URL = {https://doi.org/10.1007/BF01456724},
}
\bib{BL}{book}{
    AUTHOR = {Birkenhake, Christina},
    AUTHOR= {Lange, Herbert},
     TITLE = {Complex abelian varieties},
    SERIES = {Grundlehren der Mathematischen Wissenschaften [Fundamental
              Principles of Mathematical Sciences]},
    VOLUME = {302},
   EDITION = {Second},
 PUBLISHER = {Springer-Verlag, Berlin},
      YEAR = {2004},
     PAGES = {xii+635},
      ISBN = {3-540-20488-1},
   MRCLASS = {14-02 (14H37 14Kxx 32G20)},
  MRNUMBER = {2062673},
MRREVIEWER = {Fumio Hazama},
       DOI = {10.1007/978-3-662-06307-1},
       URL = {https://doi.org/10.1007/978-3-662-06307-1},
}
\bib{MAGMA}{article}{
    AUTHOR = {Bosma, Wieb},
    AUTHOR = {Cannon, John},
    AUTHOR = { Playoust, Catherine},        
     TITLE = {The {M}agma algebra system. {I}. {T}he user language},
      NOTE = {Computational algebra and number theory (London, 1993)},
   JOURNAL = {J. Symbolic Comput.},
  FJOURNAL = {Journal of Symbolic Computation},
    VOLUME = {24},
      YEAR = {1997},
    NUMBER = {3-4},
     PAGES = {235--265},
      ISSN = {0747-7171},
   MRCLASS = {68Q40},
  MRNUMBER = {MR1484478},
       DOI = {10.1006/jsco.1996.0125},
       URL = {http://dx.doi.org/10.1006/jsco.1996.0125},
}
\bib{AndiFab}{article}
{
    AUTHOR = {Catanese, Fabrizio},
    AUTHOR = {Demleitner, Andreas},
     TITLE = {The classification of hyperelliptic threefolds},
   JOURNAL = {Groups Geom. Dyn.},
  FJOURNAL = {Groups, Geometry, and Dynamics},
    VOLUME = {14},
      YEAR = {2020},
    NUMBER = {4},
     PAGES = {1447--1454},
      ISSN = {1661-7207},
   MRCLASS = {Prelim},
  MRNUMBER = {4186481},
       DOI = {10.4171/ggd/587},
       URL = {https://doi.org/10.4171/ggd/587},
}
\bib{LCh}{book}{
    AUTHOR = {Charlap, Leonard S.},
     TITLE = {Bieberbach groups and flat manifolds},
    SERIES = {Universitext},
 PUBLISHER = {Springer-Verlag, New York},
      YEAR = {1986},
     PAGES = {xiv+242},
      ISBN = {0-387-96395-2},
   MRCLASS = {57S30 (22E40 53C30)},
  MRNUMBER = {862114},
MRREVIEWER = {Kyung Bai Lee},
       DOI = {10.1007/978-1-4613-8687-2},
       URL = {https://doi.org/10.1007/978-1-4613-8687-2},
       }
\bib{Demleitner}{thesis}{
    AUTHOR = {Demleitner, Andreas},
     TITLE = {On Hyperelliptic Manifolds, PhD thesis University of Bayreuth},
   FJOURNAL = {EPub Bayreuth, PhD thesis, University of Bayreuth},
      YEAR = {2020},
     PAGES = {0--201},
}
\bib{frob}{article}{
    AUTHOR = {Frobenius, Ferdinand Georg},
     TITLE = {\"Uber lineare Substitutionen und bilineare Formen},
   JOURNAL = {J. Reine Angew. Math.},
  FJOURNAL = {Journal f\"ur die reine und angewandte Mathematik},
    VOLUME = {84},
      Date = {1877},
       pages={1--63},}
\bib{Lange}{article}{
    AUTHOR = {Lange, Herbert},
     TITLE = {Hyperelliptic varieties},
   JOURNAL = {Tohoku Math. J. (2)},
  FJOURNAL = {The Tohoku Mathematical Journal. Second Series},
    VOLUME = {53},
      YEAR = {2001},
    NUMBER = {4},
     PAGES = {491--510},
      ISSN = {0040-8735},
   MRCLASS = {14J30 (14J50)},
  MRNUMBER = {1862215},
MRREVIEWER = {Miguel A. Barja},
       DOI = {10.2748/tmj/1113247797},
       URL = {https://doi.org/10.2748/tmj/1113247797},
}
\bib{kodairamorrow}{book}{
   author={Morrow, James},
   author={Kodaira, Kunihiko},
   title={Complex manifolds},
   publisher={Holt, Rinehart and Winston, Inc., New York-Montreal,
   Que.-London},
   date={1971},
   pages={vii+192},
   review={\MR{0302937}},
}
\bib{miranda}{book}{
    AUTHOR = {Miranda, Rick},
     TITLE = {Algebraic curves and {R}iemann surfaces},
    SERIES = {Graduate Studies in Mathematics},
    VOLUME = {5},
 PUBLISHER = {American Mathematical Society, Providence, RI},
      YEAR = {1995},
     PAGES = {xxii+390},
      ISBN = {0-8218-0268-2},
   MRCLASS = {14Hxx (14-01 30F99)},
  MRNUMBER = {1326604},
MRREVIEWER = {R. F. Lax},
       DOI = {10.1090/gsm/005},
       URL = {https://doi.org/10.1090/gsm/005},
}
\bib{R87}{article}{
   author={Reid, Miles},
   title={Young person's guide to canonical singularities},
   conference={
      title={Algebraic geometry, Bowdoin, 1985},
      address={Brunswick, Maine},
      date={1985},
   },
   book={
      series={Proc. Sympos. Pure Math.},
      volume={46},
      publisher={Amer. Math. Soc., Providence, RI},
   },
   date={1987},
   pages={345--414},
   review={\MR{927963}},
}
\bib{schlessinger}{article}{
    AUTHOR = {Schlessinger, Michael},
     TITLE = {Rigidity of quotient singularities},
   JOURNAL = {Invent. Math.},
  FJOURNAL = {Inventiones Mathematicae},
    VOLUME = {14},
      YEAR = {1971},
     PAGES = {17--26},
      ISSN = {0020-9910},
   MRCLASS = {32G05},
  MRNUMBER = {292830},
MRREVIEWER = {F. Oort},
       DOI = {10.1007/BF01418741},
       URL = {https://doi.org/10.1007/BF01418741},
}
\bib{UchidaYoshi}{article}{
    AUTHOR = {Uchida, K\^{o}ji},
    AUTHOR = {Yoshihara, Hisao},
     TITLE = {Discontinuous groups of affine transformations of {$C^{3}$}},
   JOURNAL = {Tohoku Math. J. (2)},
  FJOURNAL = {The Tohoku Mathematical Journal. Second Series},
    VOLUME = {28},
      YEAR = {1976},
    NUMBER = {1},
     PAGES = {89--94},
      ISSN = {0040-8735},
   MRCLASS = {57E30},
  MRNUMBER = {400271},
MRREVIEWER = {F. A. Sherk},
       DOI = {10.2748/tmj/1178240881},
       URL = {https://doi.org/10.2748/tmj/1178240881},
}

\end{biblist}

{\tiny MATHEMATISCHES INSTITUT, UNIVERSIT\"AT BAYREUTH, 95440 BAYREUTH, GERMANY}

{\scriptsize\emph{E-mail address: Ingrid.Bauer@uni-bayreuth.de}} \quad {\scriptsize\emph{Christian.Gleissner@uni-bayreuth.de}}

\end{document}